\newtheorem{theorem}{Theorem}[section]
\newtheorem{corollary}[theorem]{Corollary}
\newtheorem{lemma}[theorem]{Lemma}
\newtheorem{proposition}[theorem]{Proposition}
\newtheorem{definition}[theorem]{Definition}
\newtheorem{example}[theorem]{Example}
\newtheorem{remark}[theorem]{Remark}
\newtheorem{conjecture}[theorem]{Conjecture}
\tikzstyle{block}=[draw opacity=0.7,linewidth=1.4cm]{\usetikzlibrary{arrows,shapes}}
\definecolor{classicrose}{rgb}{0.98, 0.8, 0.91}
\definecolor{cottoncandy}{rgb}{1.0, 0.74, 0.85}
\definecolor{mediumchampagne}{rgb}{0.95, 0.9, 0.67}
\definecolor{maize}{rgb}{0.98, 0.93, 0.37}
\definecolor{celadon}{rgb}{0.67, 0.88, 0.69}
\definecolor{darkseagreen}{rgb}{0.56, 0.74, 0.56}
\definecolor{pastelyellow}{rgb}{0.99, 0.99, 0.59}
\definecolor{sandstorm}{rgb}{0.93, 0.84, 0.25}
\definecolor{skyblue}{rgb}{0.53, 0.81, 0.92}
\definecolor{royalblue}{rgb}{0.25, 0.41, 0.88}
\definecolor{celadon}{rgb}{0.67, 0.88, 0.69}
\definecolor{darkcyan}{rgb}{0.0, 0.55, 0.55}
\newcolumntype{L}[1]{>{\raggedright\let\newline\\\arraybackslash\hspace{0pt}}m{#1}}
\newcolumntype{C}[1]{>{\centering\let\newline\\\arraybackslash\hspace{0pt}}m{#1}}
\newcolumntype{R}[1]{>{\raggedleft\let\newline\\\arraybackslash\hspace{0pt}}m{#1}}
\title{A family of regular integral graphs and its application to the $n$-Queens' graph}
\author[1,2]{Domingos M. Cardoso}
\author[1,2]{In\^es Ser\^odio Costa}
\author[1,2]{Rui Duarte}
\affil[1]{\small Centro de Investiga\c{c}\~{a}o e Desenvolvimento em Matem\'atica e Aplica\c{c}\~{o}es}
\affil[2]{\small Departamento de Matem\'atica, Universidade de  Aveiro, 3810-193, Aveiro, Portugal.}
\begin{document}
\maketitle

\begin{abstract}
A family of regular integral graphs introduced in [I.F.S. Costa, The $n$-Queens graph and its generalizations, Ph.D. Thesis, University of Aveiro 2024], denoted by ${\cal T}(n)$ and herein called triangular graphs, is analysed. In this analysis, the consistent structure of the graph spectra and the patterns of the corresponding eigenvectors are highlighted. The properties of these graphs are examined and applied to the decomposition of the $n$-Queens' graph into three distinct families: a family of a single graph whose components are two triangular graphs, ${\cal T}(n)$ and ${\cal T}(n-1)$, a family of a single graph whose components are cliques and a family of complete bipartite graphs. Finally, using Weyl's inequalities, we introduce some techniques to establish lower and upper bounds on the eigenvalues of the $n$-Queens' graph.
\end{abstract}

\medskip

\noindent \textbf{Keywords:} Triangular graphs, integral graphs, graph spectra.

\medskip

\noindent \textbf{MSC 2020:} 05C50.

\medskip

\section{Introduction}
An integral graph is a graph whose spectrum is entirely integral. The study of integral graphs was introduced by F. Harary and A. J. Schwenk in \cite{1974Harary}, where they ask “which graphs have integral spectra?”, with the remark that the problem for general graphs appears intractable. As it is referred in \cite{2002BCRSS}, the number of these graphs is not only infinite, but one can find them in all classes of graphs and among graphs of all orders. However, despite the existence of trivial examples of integral graphs, as it is the case of complete graphs (see further examples in \cite{1974Harary}), they are very rare and difficult to be found \cite{2002BCRSS}. Observe that, according to \cite[Cor. 1]{1974Harary}, if a regular graph is integral, then its complement is also integral. So, the graph complement of a triangular graph ${\cal T}(n)$ (which as we will see is integral) is also integral. In \cite{2002BCRSS} and \cite{2005Wang}, a survey of results on integral graphs is presented.\\

The designation triangular graph in some publications is used for graphs with distinct properties, as it is the case of the one introduced by Hoffman in \cite{Hoffman1960}, in the context of the study of the uniqueness of the triangular association scheme. These graphs are also referred in the book of Brualdi and Ryser \cite[p. 152]{BrualdiRyser1991}, where $T(m)$ denotes a triangular graph which is defined as being the line graph of the complete graph $K_m$, ($m \ge 4$). Thus the vertices of $T(m)$ may be identified as the $2$-subsets of $\{1, 2, \dots, m\}$ and two vertices are adjacent in $T(m)$ if and only if the corresponding $2$-subsets have a nonempty intersection. As it is referred in  \cite{BrualdiRyser1991}, these graphs are strongly regular graphs on the parameters $(n,d,\lambda,\mu)=(\frac{m(m-1)}{2}, 2(m-2), m-2, 4)$. Therefore, since ${\cal T}(n)$ is not strongly regular, this graph is not isomorphic to $T(n)$. However, both graphs have some close parameters, as it can be seen in Table~\ref{parameters}.
\begin{table}[h] 
\begin{center}
\begin{tabular}{|l||c|c|}
\hline         & Triangular graph ${\cal T}(m)$ & Triangular graph $T(m), m \ge 4$ \\ \hline \hline
      order    & $n = \frac{m(m+1)}{2}$         & $n = \frac{m(m-1)}{2}$         \\ \hline
      degree   & $d = 2(m-1)$                   & $d = 2(m-2)$                   \\ \hline
      diameter & $D=2$, for $m \ge 3$           & $D=2$                          \\ \hline
\end{tabular}
\caption{Some parameters of the graphs ${\cal T}(m)$ and $T(m)$.}\label{parameters}
\end{center}
\end{table}

In this paper the results of Section~\ref{sec-SpectralPropTn} follow part of the ones introduced in \cite[Chapter 6]{Costa2024}. Even so, for the reader's convenience, some proofs are herein recalled. The properties of triangular graphs of order $\frac{n(n+1)}{2}$, ${\cal T}(n)$, are explored and applied to the decomposition of the $n$-Queens' graph into graphs belonging to three families -- a family of a single graph whose components are two triangular graphs, ${\cal T}(n)$ and ${\cal T}(n-1)$, a family of a single graph whose components are cliques and a family of complete bipartite graphs. Finally, using the Weyl's inequalities, some techniques to determine lower and upper bounds on the eigenvalues of the $n$-Queens' graph are introduced. The notation used throughout the text, with the exception of very specific cases, is the one that is followed in \cite{BrouwerHaemers12} and \cite{2009CRS}.

\section{Triangular board and triangular graph $\mathcal{T}(n)$}\label{Sec-Introducti}
Let $\mathbb{T}_n$ be the board of $\frac{n(n+1)}{2}$ points in a triangular shape with $n$ dots on each side. For $n=1,2,3,4$, such arrangement is represented in Figure \ref{fig-numerotriangular}. The rows of $\mathbb{T}_n$ are labeled from $1$ to $n$, from the top to the bottom and, in the $i^{\text{th}}$ row, its dots are labeled from $1$ to $i$, from the left to the right. Then, the $j^{\text{th}}$ entry (dot) of the $i^{\text{th}}$ row of $\mathbb{T}(n)$ is identified by the ordered pair $(i,j)$.
	
	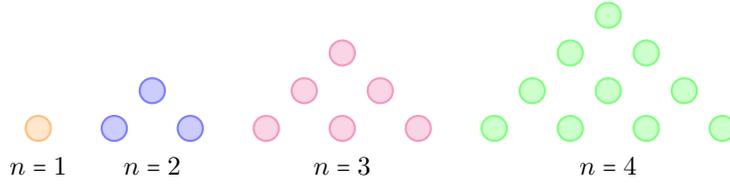
\begin{figure}[h!]
		\centering
		\begin{tikzpicture}[thick]
			\node at (1,1) [circle,draw=orange!50,fill=orange!20] {};
			\draw (1,0.5) node {$n=1$};
			
			\node at (2,1) [circle,draw=blue!50,fill=blue!20] {};
			\node at (3,1) [circle,draw=blue!50,fill=blue!20] {};
			\node at (2.5,1.5) [circle,draw=blue!50,fill=blue!20] {};
			\draw (2.5,0.5) node {$n=2$};
			
			\node at (4,1) [circle,draw=magenta!50,fill=magenta!20] {};
			\node at (5,1) [circle,draw=magenta!50,fill=magenta!20] {};
			\node at (6,1) [circle,draw=magenta!50,fill=magenta!20] {};
			\node at (4.5,1.5) [circle,draw=magenta!50,fill=magenta!20] {};
			\node at (5.5,1.5) [circle,draw=magenta!50,fill=magenta!20] {};
			\node at (5,2) [circle,draw=magenta!50,fill=magenta!20] {};
			\draw (5,0.5) node {$n=3$};
			
			\node at (7,1) [circle,draw=green!50,fill=green!20] {};
			\node at (8,1) [circle,draw=green!50,fill=green!20] {};
			\node at (9,1) [circle,draw=green!50,fill=green!20] {};
			\node at (10,1) [circle,draw=green!50,fill=green!20] {};
			\node at (7.5,1.5) [circle,draw=green!50,fill=green!20] {};
			\node at (8.5,1.5) [circle,draw=green!50,fill=green!20] {};
			\node at (9.5,1.5) [circle,draw=green!50,fill=green!20] {};
			\node at (8,2) [circle,draw=green!50,fill=green!20] {};
			\node at (9,2) [circle,draw=green!50,fill=green!20] {};
			\node at (8.5,2.5) [circle,draw=green!50,fill=green!20] {};
			\draw (8.5,0.5) node {$n=4$};
		\end{tikzpicture}
		\caption{Representation of $\mathbb{T}_n$ for $n=1,2,3,4$.} \label{fig-numerotriangular}
	\end{figure}
	
	Note that, for $n\in\mathbb{N}$, the number of dots of $\mathbb{T}_n$ is the \emph{$n^{\text{th}}$ triangular number} $T(n)=\frac{n(n+1)}{2}$.
	
	\medskip
	
	Let us define the $n^{\text{th}}$ triangular graph as $\mathcal{T}(n)=(V,E)$ with
\begin{align*}
		V & = \left\{ (i,j) \mid i \in [n] \land j \in [i] \right\} \\
        \intertext{and}
		E & = \left\{ ((i,j),(k,\ell)) \in V^2 \mid (i=k \vee j = \ell \vee i-j=k-\ell) \land (i,j) \neq (k,\ell) \right\}.
	\end{align*}
	
	The vertex $(i,j)\in V$ can also be denoted by the label $\ell=T(i-1)+j\in[T(n)]$. These two different labelings are presented in Figure \ref{fig-exampleTn} for $\mathcal{T}(4)$. The vertices of $\mathcal{T}(n)$ will be represented in an identical way as the dots in Figure \ref{fig-numerotriangular}: the vertex $(i,j)$ is in the entry $(i,j)$ of $\mathbb{T}_n$. The row, column and diagonal of the vertex $(i,j)$ are called $i^\text{th}$ row, $j^\text{th}$ column and $(i-j)^\text{th}$ diagonal. So, two distinct vertices are neighbors if they are in the same row, column or diagonal.
	
	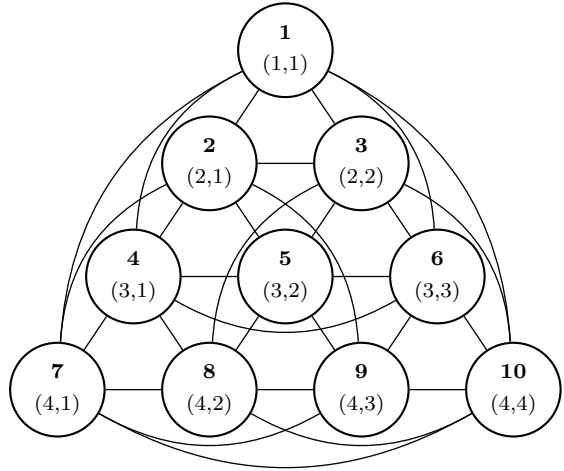
\begin{figure}[h!]
		\centering
		\begin{tikzpicture}
			\begin{scope}[every node/.style={circle,thick,draw,minimum size=0.8cm},every text node part/.style={align=center}]
				\node (1) at (0,0) {\footnotesize\textbf{1}\\ \footnotesize(1,1)};
				\node (2) at (-1,-1.5) {\footnotesize\textbf{2}\\ \footnotesize(2,1)};
				\node (3) at (1,-1.5) {\footnotesize\textbf{3}\\\footnotesize(2,2)};
				\node (4) at (-2,-3) {\footnotesize\textbf{4}\\ \footnotesize(3,1)};
				\node (5) at (0,-3) {\footnotesize\textbf{5}\\ \footnotesize(3,2)};
				\node (6) at (2,-3) {\footnotesize\textbf{6}\\ \footnotesize(3,3)};
				\node (7) at (-3,-4.5) {\footnotesize\textbf{7}\\ \footnotesize(4,1)};
				\node (8) at (-1,-4.5) {\footnotesize\textbf{8}\\ \footnotesize(4,2)};
				\node (9) at (1,-4.5) {\footnotesize\textbf{9}\\ \footnotesize(4,3)};
				\node (10) at (3,-4.5) {\footnotesize\textbf{10}\\ \footnotesize(4,4)};
			\end{scope}
			
			\begin{scope}[>={Stealth[black]},
				every edge/.style={draw=black,line width=0.5pt}]
				\path (1) edge node {} (2);
				\path (1) edge node {} (3);
				\path (1) edge[bend right=30] node {} (4);
				\path (1) edge[bend left=30] node {} (6);
				\path (1) edge[bend right=30] node {} (7);
				\path (1) edge[bend left=30] node {} (10);
				
				\path (2) edge node {} (3);
				\path (2) edge node {} (4);
				\path (2) edge node {} (5);
				\path (2) edge[bend right=30] node {} (7);
				\path (2) edge[bend left=30] node {} (9);
				
				\path (3) edge node {} (5);
				\path (3) edge node {} (6);
				\path (3) edge[bend right=30] node {} (8);
				\path (3) edge[bend left=30] node {} (10);
				
				\path (4) edge node {} (5);
				\path (4) edge node {} (7);
				\path (4) edge[bend right=30] node {} (6);
				\path (4) edge node {} (8);
				
				\path (5) edge node {} (6);
				\path (5) edge node {} (8);
				\path (5) edge node {} (9);
				
				\path (6) edge node {} (9);
				\path (6) edge node {} (10);
				
				\path (7) edge node {} (8);
				\path (7) edge[bend right=30] node {} (9);
				\path (7) edge[bend right=30] node {} (10);
				
				\path (8) edge node {} (9);
				\path (8) edge[bend right=30] node {} (10);
				
				\path (9) edge node {} (10);
				
			\end{scope}
		\end{tikzpicture}
		\caption{$\mathcal{T}(4)$ and its two vertex labelings.}\label{fig-exampleTn}
	\end{figure}
	
	\medskip

	By definition, the order of $\mathcal{T}(n)$, which is the number of vertices, is $T(n)=\frac{n(n+1)}{2}$.
	
	\begin{proposition}\label{prop-2n-2-regulargraph}
		$\mathcal{T}(n)$ is a $(2n-2)$-regular graph.
	\end{proposition}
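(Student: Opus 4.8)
The plan is to fix an arbitrary vertex $(i,j)\in V$ and count its neighbors directly, partitioning them according to the three adjacency conditions in the definition of $E$ (same row, same column, same diagonal). Since $\mathcal{T}(n)$ is simple, I first confirm that these three families of neighbors meet only at $(i,j)$ itself, so that the total degree is simply the sum of the three counts.

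First I would count the neighbors in the same row: these are the vertices $(i,\ell)$ with $\ell\in[i]$ and $\ell\neq j$, giving $i-1$ of them. Next, the neighbors in the same column are the vertices $(k,j)$ with $k\neq i$; since a dot $(k,j)$ exists precisely when $j\le k\le n$, this yields $n-j$ vertices. Finally, the neighbors on the same diagonal $d=i-j\ge 0$ are the vertices $(k,k-d)$ subject to $1\le k-d$ and $k\le n$, i.e. $d+1\le k\le n$; discarding the value $k=i$ (which recovers $(i,j)$ itself, since then $k-d=j$) leaves $n-d-1=n-(i-j)-1$ vertices. The slightly delicate point here is pinning down the correct range of $k$ along the diagonal, because its endpoints depend on both coordinates of the chosen vertex; the inequality $\ell=k-d\le k$ is automatic from $d\ge 0$, so only the constraints $k-d\ge 1$ and $k\le n$ are active.

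I would then verify disjointness: if a single neighbor belonged to two of the three classes at once (row and column, row and diagonal, or column and diagonal), solving the associated pair of defining equations forces it to coincide with $(i,j)$ in each case, contradicting that it is a distinct vertex. Hence the three counts may be added without correction.

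Summing the three contributions gives
\[
\deg(i,j)=(i-1)+(n-j)+\bigl(n-(i-j)-1\bigr)=2n-2,
\]
in which the dependence on $i$ and $j$ cancels completely. As this value is the same for every vertex, $\mathcal{T}(n)$ is $(2n-2)$-regular. I expect the only genuine care to be needed in the diagonal count and in the disjointness check; the remainder is a direct tally.
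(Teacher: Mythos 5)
Your proof is correct and follows essentially the same approach as the paper: counting the vertices in the row, column, and diagonal through $(i,j)$ and summing $(i-1)+(n-j)+(n-(i-j)-1)=2n-2$. The only addition is your explicit disjointness check (that the three lines through $(i,j)$ meet only at $(i,j)$), which the paper leaves implicit; this is a harmless and slightly more careful touch.
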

	\begin{proof}
Since the sets of vertices in the $i^\text{th}$ row, $j^\text{th}$ column and $(i-j)^\text{th}$ diagonal are
\[
\{ (i,1), \ldots, (i,i) \}, \ \ \{ (j,j), \ldots, (n,j) \}, \ \ \text{and} \ \ \{ (1+i-j,1), \ldots, (n,n-(i-j)) \},
\]
respectively,
\[
d((i,j)) = (i-1) + (n-j) + (n-i+j-1) = 2n-2. \qedhere
\]
	\end{proof}
	
    By Proposition \ref{prop-2n-2-regulargraph}, the size of $\mathcal{T}(n)$, which is the number of edges, is $\frac{(2n-2)|V(\mathcal{T}(n))|}{2}=\frac{n^3-n}{2}$.

In the next section, it will be proved that the triangular graphs are integral.

Before that, it is worth recalling the following notions which appear in \cite{CardosoCostaDuarte2023}.

An edge clique partition (ECP for short) of a graph $G$ is a partition of the set of edges such that each part induces a complete graph. Considering a graph $G$ and an ECP $P=\{ E_i \mid i \in I \}$, it follows that $V_i=V(G[E_i])$ is a clique of $G$ for every $i \in I$. For any $v \in V(G)$, the clique degree of $v$ relative to $P$, denoted $m_v(P)$, is the number of cliques $V_i$ containing the vertex $v$, and the maximum clique degree of $G$ relative to $P$, denoted $m_G(P)$, is the maximum of clique degrees of the vertices of $G$ relative to $P$.

It is also worth to recall the next theorem which also appears in \cite{CardosoCostaDuarte2023}.

\begin{theorem} {\rm \cite{CardosoCostaDuarte2023}} \label{Main_result_1}
	Let $P=\{ E_i \mid i \in I \}$ be an ECP of a graph $G$, $m=m_G(P)$ and $m_v=m_v(P)$ for every $v \in V(G)$. Then
	\begin{enumerate}
		\item If $\mu$ is an eigenvalue of $G$, then $\mu \ge -m$. \label{marca}
		\item $-m$ is an eigenvalue of $G$ if and only if there exists a vector $x \ne \mathbf{0}$ such that
		\begin{enumerate}
			\item $\sum \limits_{j \in V(G[E_i])} x_j = 0$, for every $i \in I$ and \label{cond1}
			\item $\forall v \in V(G) \;\; x_v=0$ whenever $m_v \ne m$. \label{cond2}
		\end{enumerate}\label{marca2}
		In the positive case, $x$ is an eigenvector associated with $-m$.
	\end{enumerate}
\end{theorem}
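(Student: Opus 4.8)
The plan is to exploit the partition hypothesis to express the adjacency matrix $A$ of $G$ as a sum indexed by the cliques and then study the quadratic form of $A + mI$. For each $i \in I$ let $\mathbf{1}_{V_i} \in \{0,1\}^{V(G)}$ be the characteristic vector of the clique $V_i = V(G[E_i])$ and set $J_i = \mathbf{1}_{V_i}\mathbf{1}_{V_i}^{\top}$. The adjacency matrix of the complete graph $G[E_i]$, regarded as an operator on $\mathbb{R}^{V(G)}$, is $J_i - \operatorname{diag}(\mathbf{1}_{V_i})$. Because $P$ is a \emph{partition} of $E(G)$, each edge lies in exactly one clique, so summing these matrices over $i$ reproduces $A$ exactly. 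Writing $M = \operatorname{diag}(m_v : v \in V(G))$ and noting $\sum_i \operatorname{diag}(\mathbf{1}_{V_i}) = M$, I obtain the identity $A + M = \sum_{i \in I} J_i$, a sum of rank-one positive semidefinite matrices and hence itself positive semidefinite. Since $m_v \le m$ for every $v$, we have $mI - M \succeq 0$, so $A + mI = \sum_{i \in I} J_i + (mI - M) \succeq 0$. Evaluating this form on an eigenvector associated with an eigenvalue $\mu$ gives $\mu \ge -m$, which is part~\ref{marca}.

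For part~\ref{marca2} I would use that a vector lies in the kernel of a positive semidefinite matrix precisely when it annihilates the corresponding quadratic form. Thus $-m$ is an eigenvalue of $G$ if and only if $\ker(A+mI) \neq \{\mathbf 0\}$, i.e.\ there is some $x \neq \mathbf 0$ with
\[
x^{\top}(A + mI)x = \sum_{i \in I}\Bigl(\sum_{j \in V(G[E_i])} x_j\Bigr)^{2} + \sum_{v \in V(G)} (m - m_v)\, x_v^{2} = 0 .
\]
Here the first sum comes from the $J_i$ (each term being the square of the clique sum), while the second comes from $mI - M$. As every summand is non-negative, the total vanishes if and only if $\sum_{j \in V(G[E_i])} x_j = 0$ for all $i$ (condition~\ref{cond1}) and $x_v = 0$ whenever $m_v \neq m$ (condition~\ref{cond2}); and when this happens the equation $(A+mI)x = \mathbf 0$ exhibits $x$ as an eigenvector for $-m$.

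The crux is the identity $A + M = \sum_{i \in I} J_i$, and this is exactly the step where the partition assumption does the work: if $P$ were merely an edge clique cover, an edge contained in several cliques would be overcounted in $\sum_i J_i$ and the identity would fail. One must also check that two vertices sharing a clique $V_i$ are necessarily adjacent in $G[E_i]$ --- immediate from completeness --- so that the off-diagonal entries of $\sum_i J_i$ record genuine edges of $G$ and nothing more. Once this decomposition is secured, everything reduces to the elementary observation that a positive semidefinite quadratic form is a sum of squares, which vanishes if and only if each individual square does.
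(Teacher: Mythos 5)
Your proof is correct. Note that the paper itself contains no proof of this theorem---it is quoted from \cite{CardosoCostaDuarte2023}---but your argument, decomposing $A+M=\sum_{i\in I}\mathbf{1}_{V_i}\mathbf{1}_{V_i}^{\top}$ (with $M=\operatorname{diag}(m_v)$) and reading off both the bound and the equality conditions from the positive semidefinite form $x^{\top}(A+mI)x=\sum_{i\in I}\bigl(\sum_{j\in V(G[E_i])}x_j\bigr)^2+\sum_{v}(m-m_v)x_v^2$, is essentially the same approach used to establish the result in that reference.
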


\section{Spectral properties of $\mathcal{T}(n)$}\label{sec-SpectralPropTn}

	Since the vectors in this section have a triangular number of components, they are written over $\mathbb{T}_n$, as in the previous section. Therefore, the $a^{\text{th}}$ coordinate of a vector $v$ will be displayed at the entry $(i,j)$ of $\mathbb{T}_n$ for $a=T(i-1)+j$ and it will be denoted by $v_a$ or $v_{(i,j)}$.
	
	\begin{example}
    The vector $v=[1,2,3,4,5,6,7,8,9,10]^T$ consists of $T(4)=10$ components. This type of vector will be displayed on a triangular board, as illustrated in Figure \ref{fig-example-vectorTn}.
		\begin{figure}[h]
			\centering
			\begin{tikzpicture}[scale=0.85]
				\begin{scope}[every node/.style={circle,thick,draw,minimum size=0.8cm},every text node part/.style={align=center}]
					\node (1) at (4.5,0) [circle] {1};
					\node (2) at (4,-0.9) [circle] {2};
					\node (3) at (5,-0.9) [circle] {3};
					\node (4) at (3.5,-1.8) [circle] {4};
					\node (5) at (4.5,-1.8) [circle] {5};
					\node (6) at (5.5,-1.8) [circle] {6};
					\node (7) at (3,-2.7) [circle] {7};
					\node (8) at (4,-2.7) [circle] {8};
					\node (9) at (5,-2.7) [circle] {9};
					\node (10) at (6,-2.7) [circle] {10};
				\end{scope}
				\draw (2.2,-1.35) node {$v=$};
			\end{tikzpicture}
			\caption{A vector on the triangular board.}\label{fig-example-vectorTn}
		\end{figure}
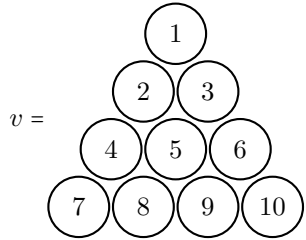
		So, for example, the $5^\text{th}$ component of $v$, which is $5$, will be denoted $v_{(3,2)}$ because it is in the $3^\text{th}$ row and $2^\text{nd}$ column.
	\end{example}
	
	\medskip
	
	Consider the following definition.
	
	\begin{definition}\label{Def-v+v-}
		Let $v\in\mathbb{R}^{T(n)}$ be a vector for $n\in\mathbb{N}$. The positive rotation and the negative rotation of $v$ are denoted by $v^+$ and $v^-$, respectively, and are defined as follows
		\[
        v^+_{(i,j)}=v_{(n-i+j,n-i+1)} \quad \text{and} \quad v^-_{(i,j)}=v_{(n-i+1,i-j+1)}.
        \]
	\end{definition}
	
	\begin{remark}
		Note that, considering the representation of the vectors in $\mathbb{T}_n$, $v^+$ and $v^-$ correspond to a $120^\circ$ and a $-120^\circ$ $(240^\circ)$ rotation of $v$, respectively. So, these transformations fulfill the same properties as rotations of order $3$. For example $(v^+)^-=v$, $(v^+)^+=v^-$, $((v^+)^+)^+=v$. In Figure \ref{fig-vector-rotation}, an example of these vector rotations is presented.
	\end{remark}
	
	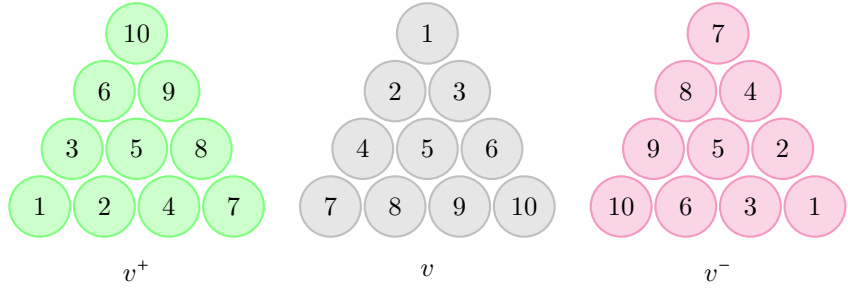
\begin{figure}[h!]
		\centering
		\begin{tikzpicture}[scale=0.85]
			\begin{scope}[every node/.style={circle,thick,draw,minimum size=0.8cm},every text node part/.style={align=center}]
				\node (1+) at (0,0) [circle,draw=green!50,fill=green!20] {10};
				\node (2+) at (-0.5,-0.9) [circle,draw=green!50,fill=green!20] {6};
				\node (3+) at (0.5,-0.9) [circle,draw=green!50,fill=green!20] {9};
				\node (4+) at (-1,-1.8) [circle,draw=green!50,fill=green!20] {3};
				\node (5+) at (0,-1.8) [circle,draw=green!50,fill=green!20] {5};
				\node (6+) at (1,-1.8) [circle,draw=green!50,fill=green!20] {8};
				\node (7+) at (-1.5,-2.7) [circle,draw=green!50,fill=green!20] {1};
				\node (8+) at (-0.5,-2.7) [circle,draw=green!50,fill=green!20] {2};
				\node (9+) at (0.5,-2.7) [circle,draw=green!50,fill=green!20] {4};
				\node (10+) at (1.5,-2.7) [circle,draw=green!50,fill=green!20] {7};
				
				\node (1) at (4.5,0) [circle,draw=gray!50,fill=gray!20] {1};
				\node (2) at (4,-0.9) [circle,draw=gray!50,fill=gray!20] {2};
				\node (3) at (5,-0.9) [circle,draw=gray!50,fill=gray!20] {3};
				\node (4) at (3.5,-1.8) [circle,draw=gray!50,fill=gray!20] {4};
				\node (5) at (4.5,-1.8) [circle,draw=gray!50,fill=gray!20] {5};
				\node (6) at (5.5,-1.8) [circle,draw=gray!50,fill=gray!20] {6};
				\node (7) at (3,-2.7) [circle,draw=gray!50,fill=gray!20] {7};
				\node (8) at (4,-2.7) [circle,draw=gray!50,fill=gray!20] {8};
				\node (9) at (5,-2.7) [circle,draw=gray!50,fill=gray!20] {9};
				\node (10) at (6,-2.7) [circle,draw=gray!50,fill=gray!20] {10};
				
				\node (1-) at (9,0) [circle,draw=magenta!50,fill=magenta!20] {7};
				\node (2-) at (8.5,-0.9) [circle,draw=magenta!50,fill=magenta!20] {8};
				\node (3-) at (9.5,-0.9) [circle,draw=magenta!50,fill=magenta!20] {4};
				\node (4-) at (8,-1.8) [circle,draw=magenta!50,fill=magenta!20] {9};
				\node (5-) at (9,-1.8) [circle,draw=magenta!50,fill=magenta!20] {5};
				\node (6-) at (10,-1.8) [circle,draw=magenta!50,fill=magenta!20] {2};
				\node (7-) at (7.5,-2.7) [circle,draw=magenta!50,fill=magenta!20] {10};
				\node (8-) at (8.5,-2.7) [circle,draw=magenta!50,fill=magenta!20] {6};
				\node (9-) at (9.5,-2.7) [circle,draw=magenta!50,fill=magenta!20] {3};
				\node (10-) at (10.5,-2.7) [circle,draw=magenta!50,fill=magenta!20] {1};
			\end{scope}
			\draw (0,-3.7) node {$v^+$};
			\draw (4.5,-3.7) node {$v$};
			\draw (9,-3.7) node {$v^-$};
		\end{tikzpicture}
		\caption{A vector $v$ and its positive (in green) and negative (in magenta) rotations.}\label{fig-vector-rotation}
	\end{figure}

	Consider the following proposition which states a relation between the eigenvalues and eigenvectors of a graph $G$, and an automorphism of $G$.
	
	\begin{proposition} {\rm \cite[Corollary 2.9]{LauriScapellato2003}} \label{Prop-EigAutomGraph}
		Let $x$ be an eigenvector of $A$ associated with
        $\lambda$ and $P$ a permutation matrix corresponding to an automorphism of $G$. Then $Px$ is also an eigenvector of $A$ associated with $\lambda$.
	\end{proposition}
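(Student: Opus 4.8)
The plan is to reduce the statement to the single algebraic fact that a permutation matrix coming from an automorphism commutes with the adjacency matrix $A$. First I would recall the definition: an automorphism of $G$ is a bijection $\sigma$ of the vertex set preserving adjacency, so that vertices $u,v$ are adjacent exactly when $\sigma(u),\sigma(v)$ are adjacent. If $P$ denotes the permutation matrix associated with $\sigma$ (with $P_{uv}=1$ precisely when $u=\sigma(v)$), then the adjacency-preserving property is equivalent to the entrywise equalities $A_{\sigma(u)\sigma(v)}=A_{uv}$, which assemble into the matrix identity $P^{T}AP=A$. Since $P$ is a permutation matrix it is orthogonal, $P^{T}=P^{-1}$, so this is the same as the commutation relation $AP=PA$.

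With this relation established, the conclusion is immediate. Starting from $Ax=\lambda x$, I would compute
\[
A(Px)=(AP)x=(PA)x=P(Ax)=P(\lambda x)=\lambda (Px),
\]
so $Px$ satisfies the eigenvector equation for $\lambda$. It remains only to check that $Px\neq\mathbf{0}$; but $P$ is invertible and $x\neq\mathbf{0}$ by hypothesis, whence $Px\neq\mathbf{0}$ and is therefore a genuine eigenvector of $A$ associated with $\lambda$.

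The only step requiring any care is the verification of $P^{T}AP=A$ from the definition of automorphism, that is, keeping track of the indexing convention that links $\sigma$ to $P$; once this is correctly set up the argument is purely formal. I expect no real obstacle beyond this bookkeeping, and I note that the same computation shows that the whole eigenspace associated with $\lambda$ is invariant under $P$, a fact that will be convenient when combining several automorphisms (such as the rotations $v\mapsto v^{+}$ and $v\mapsto v^{-}$ of Definition~\ref{Def-v+v-}).
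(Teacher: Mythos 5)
Your proof is correct: the commutation relation $AP=PA$ derived from $P^{T}AP=A$, together with invertibility of $P$, is exactly the standard argument for this fact. The paper itself states this proposition without proof, citing it from Lauri and Scapellato, so your write-up simply supplies the expected textbook justification and there is nothing to compare beyond that.
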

	
	\begin{corollary}
		If $v\in\mathbb{R}^{T(n)}$, for $n\in\mathbb{N}$, is an eigenvector of $\mathcal{T}(n)$, then their two rotations, $v^+$ and $v^-$, are also eigenvectors associated with the same eigenvalue.
	\end{corollary}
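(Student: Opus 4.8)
The plan is to reduce the statement to Proposition~\ref{Prop-EigAutomGraph} by exhibiting the positive rotation as the action of a permutation matrix attached to an automorphism of $\mathcal{T}(n)$. Concretely, I would read off from Definition~\ref{Def-v+v-} the vertex map $\phi\colon(i,j)\mapsto(n-i+j,\,n-i+1)$, so that $v^+_{(i,j)}=v_{\phi(i,j)}$; writing $P$ for the corresponding permutation matrix, this says exactly $v^+=Pv$. Everything then hinges on proving that $\phi$ is an automorphism of $\mathcal{T}(n)$.

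First I would check that $\phi$ is a well-defined bijection of $V$. For $(i,j)\in V$ one has $1\le j\le i\le n$; setting $(i',j')=\phi(i,j)=(n-i+j,\,n-i+1)$, the inequalities $1\le j'\le i'\le n$ follow from $1\le i\le n$, from $1\le j$, and from $j\le i$, so $\phi(V)\subseteq V$. Bijectivity is then immediate from the identity $(v^+)^-=v$ recorded in the Remark, which identifies the inverse of $\phi$ with the vertex map of the negative rotation; alternatively one inverts $\phi$ directly.

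The core step is the verification that $\phi$ preserves adjacency, and the guiding observation is that a $120^\circ$ rotation cyclically permutes the three adjacency types of rows, columns and diagonals. Writing $\phi(i,j)=(n-i+j,\,n-i+1)$ and $\phi(k,\ell)=(n-k+\ell,\,n-k+1)$, a short computation shows that $i=k$ forces equal second coordinates (same column), that $j=\ell$ forces equal values of the first-minus-second coordinate, namely $j-1$ (same diagonal), and that $i-j=k-\ell$ forces equal first coordinates $n-(i-j)$ (same row). Since adjacency in $\mathcal{T}(n)$ is precisely the disjunction of these three conditions, $\phi$ sends edges to edges; being a bijection of $V$, it is therefore an automorphism.

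With $\phi$ shown to be an automorphism, Proposition~\ref{Prop-EigAutomGraph} applied to $P$ yields that $v^+=Pv$ is an eigenvector of $\mathcal{T}(n)$ associated with the same eigenvalue as $v$. For the negative rotation I would invoke the relation $(v^+)^+=v^-$ from the Remark: applying the argument once more (equivalently, using that $\phi^2$ is again an automorphism) shows that $v^-=P^2v$ is likewise an eigenvector for that same eigenvalue. I expect the only mildly delicate points to be the bookkeeping in the three adjacency cases and the domain check for $\phi$; both are routine, so I do not anticipate a genuine obstacle.
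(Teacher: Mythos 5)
Your proposal is correct and follows essentially the same route as the paper: the paper's proof is a one-line appeal to Definition~\ref{Def-v+v-} and Proposition~\ref{Prop-EigAutomGraph}, and what you have done is supply the verification (well-definedness, bijectivity, and the cyclic row~$\to$~column~$\to$~diagonal bookkeeping) that the rotation map $\phi(i,j)=(n-i+j,\,n-i+1)$ is indeed an automorphism, which the paper leaves implicit. Handling $v^-$ via the relation $(v^+)^+=v^-$ and the automorphism $\phi^2$ is also the right move, and matches the intended content of the paper's Remark.
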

	\begin{proof}
    Considering Definition~\ref{Def-v+v-} and Proposition~\ref{Prop-EigAutomGraph}, the result follows.
	\end{proof}
	
	
	Consider the vectors $R_{n,r}, C_{n,c},D_{n,d}\in\mathbb{R}^{T(n)}$, with $n\in\mathbb{N}$, $r,c\in[n]$ and $d\in[0,n-1]$, defined as follows.
    \begin{equation}
		R_{n,r}(i,j) = \delta_{ir}, \quad
		C_{n,c}(i,j) = \delta_{jc}, \quad \text{and} \quad
		D_{n,d}(i,j) = \delta_{(i-j)d}
	\end{equation}
    where $\delta$ is the Kronecker delta.

	We call these vectors \emph{$RCD$-vectors}. See Figure~\ref{fig-RCD-vectors}.
	
	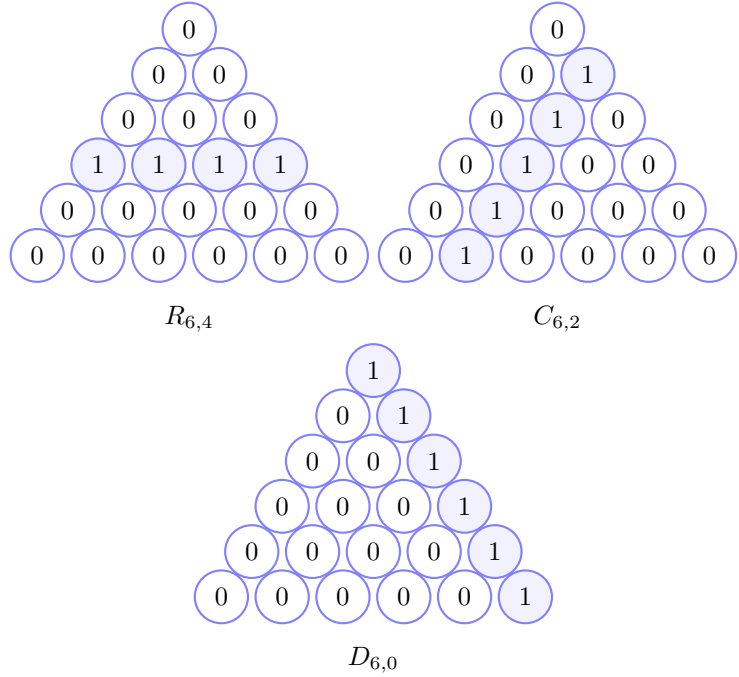
\begin{figure}[h!]
		\centering
		\begin{tikzpicture}[scale=0.8]
			\begin{scope}[every node/.style={circle,thick,draw,minimum size=0.7cm},every text node part/.style={align=center}]
				\node (1) at (0,0) [circle,draw=blue!50] {0};
				\node (2) at (-0.5,-0.75) [circle,draw=blue!50] {0};
				\node (3) at (0.5,-0.75) [circle,draw=blue!50] {0};
				\node (4) at (-1,-1.5) [circle,draw=blue!50] {0};
				\node (5) at (0,-1.5) [circle,draw=blue!50] {0};
				\node (6) at (1,-1.5) [circle,draw=blue!50] {0};
				\node (7) at (-1.5,-2.25) [circle,draw=blue!50,fill=blue!5] {1};
				\node (8) at (-0.5,-2.25) [circle,draw=blue!50,fill=blue!5] {1};
				\node (9) at (0.5,-2.25) [circle,draw=blue!50,fill=blue!5] {1};
				\node (10) at (1.5,-2.25) [circle,draw=blue!50,fill=blue!5] {1};
				\node (11) at (-2,-3) [circle,draw=blue!50] {0};
				\node (12) at (-1,-3) [circle,draw=blue!50] {0};
				\node (13) at (0,-3) [circle,draw=blue!50] {0};
				\node (14) at (1,-3) [circle,draw=blue!50] {0};
				\node (15) at (2,-3) [circle,draw=blue!50] {0};
				\node (16) at (-2.5,-3.75) [circle,draw=blue!50] {0};
				\node (17) at (-1.5,-3.75) [circle,draw=blue!50] {0};
				\node (18) at (-0.5,-3.75) [circle,draw=blue!50] {0};
				\node (19) at (0.5,-3.75) [circle,draw=blue!50] {0};
				\node (20) at (1.5,-3.75) [circle,draw=blue!50] {0};
				\node (21) at (2.5,-3.75) [circle,draw=blue!50] {0};
			\end{scope}
			\draw (0,-4.8) node {$R_{6,4}$};
		\end{tikzpicture}	
		\begin{tikzpicture}[scale=0.8]
			\begin{scope}[every node/.style={circle,thick,draw,minimum size=0.7cm},every text node part/.style={align=center}]
				\node (1) at (0,0) [circle,draw=blue!50] {0};
				\node (2) at (-0.5,-0.75) [circle,draw=blue!50] {0};
				\node (3) at (0.5,-0.75) [circle,draw=blue!50,fill=blue!5] {1};
				\node (4) at (-1,-1.5) [circle,draw=blue!50] {0};
				\node (5) at (0,-1.5) [circle,draw=blue!50,fill=blue!5] {1};
				\node (6) at (1,-1.5) [circle,draw=blue!50] {0};
				\node (7) at (-1.5,-2.25) [circle,draw=blue!50] {0};
				\node (8) at (-0.5,-2.25) [circle,draw=blue!50,fill=blue!5] {1};
				\node (9) at (0.5,-2.25) [circle,draw=blue!50] {0};
				\node (10) at (1.5,-2.25) [circle,draw=blue!50] {0};
				\node (11) at (-2,-3) [circle,draw=blue!50] {0};
				\node (12) at (-1,-3) [circle,draw=blue!50,fill=blue!5] {1};
				\node (13) at (0,-3) [circle,draw=blue!50] {0};
				\node (14) at (1,-3) [circle,draw=blue!50] {0};
				\node (15) at (2,-3) [circle,draw=blue!50] {0};
				\node (16) at (-2.5,-3.75) [circle,draw=blue!50] {0};
				\node (17) at (-1.5,-3.75) [circle,draw=blue!50,fill=blue!5] {1};
				\node (18) at (-0.5,-3.75) [circle,draw=blue!50] {0};
				\node (19) at (0.5,-3.75) [circle,draw=blue!50] {0};
				\node (20) at (1.5,-3.75) [circle,draw=blue!50] {0};
				\node (21) at (2.5,-3.75) [circle,draw=blue!50] {0};
			\end{scope}
			\draw (0,-4.8) node {$C_{6,2}$};
		\end{tikzpicture}
		\begin{tikzpicture}[scale=0.8]
			\begin{scope}[every node/.style={circle,thick,draw,minimum size=0.7cm},every text node part/.style={align=center}]
				\node (1) at (0,0) [circle,draw=blue!50,fill=blue!5] {1};
				\node (2) at (-0.5,-0.75) [circle,draw=blue!50] {0};
				\node (3) at (0.5,-0.75) [circle,draw=blue!50,fill=blue!5] {1};
				\node (4) at (-1,-1.5) [circle,draw=blue!50] {0};
				\node (5) at (0,-1.5) [circle,draw=blue!50] {0};
				\node (6) at (1,-1.5) [circle,draw=blue!50,fill=blue!5] {1};
				\node (7) at (-1.5,-2.25) [circle,draw=blue!50] {0};
				\node (8) at (-0.5,-2.25) [circle,draw=blue!50] {0};
				\node (9) at (0.5,-2.25) [circle,draw=blue!50] {0};
				\node (10) at (1.5,-2.25) [circle,draw=blue!50,fill=blue!5] {1};
				\node (11) at (-2,-3) [circle,draw=blue!50] {0};
				\node (12) at (-1,-3) [circle,draw=blue!50] {0};
				\node (13) at (0,-3) [circle,draw=blue!50] {0};
				\node (14) at (1,-3) [circle,draw=blue!50] {0};
				\node (15) at (2,-3) [circle,draw=blue!50,fill=blue!5] {1};
				\node (16) at (-2.5,-3.75) [circle,draw=blue!50] {0};
				\node (17) at (-1.5,-3.75) [circle,draw=blue!50] {0};
				\node (18) at (-0.5,-3.75) [circle,draw=blue!50] {0};
				\node (19) at (0.5,-3.75) [circle,draw=blue!50] {0};
				\node (20) at (1.5,-3.75) [circle,draw=blue!50] {0};
				\node (21) at (2.5,-3.75) [circle,draw=blue!50,fill=blue!5] {1};
			\end{scope}
			\draw (0,-4.8) node {$D_{6,0}$};
		\end{tikzpicture}
		
		\caption{The vectors $R_{6,4}, C_{6,2}$ and $D_{6,0}$.}\label{fig-RCD-vectors}
	\end{figure}
	
	
	For a vector $v\in \mathbb{R}^{T(n)}$, $n\in\mathbb{N}$, the vectors $S^r_v$, $S^c_v$, $S^d_v \in \mathbb{R}^n$ represent the sum of the components of $v$ by rows, columns, and diagonals, respectively, and could be defined as follows.	
	\begin{equation}
		\begin{split}
			S^r_v (\ell_1) &= \sum_{j=1}^{\ell_1}v(\ell_1,j),\quad \ell_1\in[n]\\
			S^c_v (\ell_2) &= \sum_{i=\ell_2}^{n}v(i,\ell_2),\quad \ell_2\in[n]\\
			S^d_v (\ell_3+1) &= \sum_{j=1}^{n-\ell_3} v(j+\ell_3,j),\quad \ell_3\in[0,n-1]\\
		\end{split}
	\end{equation}
	These vectors will be simply called \emph{sum-vector by row/column/diagonal} of $v$.
	
	\subsection{The largest and the least eigenvalue}\label{sec-leastlargesteigenvalueTn}
	
	In this section, the largest and the least eigenvalues of $\mathcal{T}(n)$ and associated eigenvectors are investigated.
	
	\begin{theorem}\label{Th_Largest}
		For $n\in\mathbb{N}$, the largest eigenvalue of $\mathcal{T}(n)$ is $2n-2$. Furthermore, $2n-2$ is simple and $\mathbf{1}^{T(n)}$ is an eigenvector associated with $2n-2$.
	\end{theorem}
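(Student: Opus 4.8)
The plan is to exploit the fact, established in Proposition~\ref{prop-2n-2-regulargraph}, that $\mathcal{T}(n)$ is regular of degree $2n-2$. For a $d$-regular graph the all-ones vector is automatically an eigenvector for the eigenvalue $d$: since every row of the adjacency matrix $A$ of $\mathcal{T}(n)$ sums to $2n-2$, we have $A\,\mathbf{1}^{T(n)} = (2n-2)\,\mathbf{1}^{T(n)}$. This immediately yields that $2n-2$ is an eigenvalue with $\mathbf{1}^{T(n)}$ as an associated eigenvector, settling two of the three assertions at once.

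Next I would argue that no eigenvalue exceeds $2n-2$. The matrix $A$ is symmetric and entrywise nonnegative, so its spectral radius is bounded above by its maximum row sum; equivalently, by Gershgorin's theorem every eigenvalue lies in $[-\Delta,\Delta]$, where $\Delta$ is the maximum degree. Since here every row sum equals the common degree $2n-2$, every eigenvalue $\mu$ of $\mathcal{T}(n)$ satisfies $\mu \le 2n-2$. Combined with the previous step, this shows $2n-2$ is indeed the largest eigenvalue.

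It remains to prove simplicity, and this is where the only genuine content lies. I would show that $\mathcal{T}(n)$ is connected and then invoke the Perron--Frobenius theorem, which guarantees that the spectral radius of a connected graph (whose adjacency matrix is irreducible and nonnegative) is a simple eigenvalue with a positive eigenvector. Connectivity follows readily from the combinatorial description of the board: the vertices of column $j$, namely $(j,j),(j+1,j),\dots,(n,j)$, all lie in a common clique, so each vertex $(i,j)$ is adjacent to (or equal to) the bottom vertex $(n,j)$; moreover the whole bottom row $(n,1),\dots,(n,n)$ is itself a clique. Hence every vertex is joined by a path of length at most two to every bottom-row vertex, and the bottom row ties all such paths together, so $\mathcal{T}(n)$ is connected. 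Therefore the largest eigenvalue $2n-2$ is simple.

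The main obstacle is essentially bookkeeping rather than deep: once connectivity is recorded, Perron--Frobenius supplies both the simplicity and the positivity of the eigenvector, so the whole argument reduces to verifying that the row, column, and diagonal cliques of the triangular board suffice to connect any two vertices, which the column-plus-bottom-row observation above makes transparent.
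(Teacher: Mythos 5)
Your proof is correct and follows essentially the same route as the paper: the paper's proof is a one-line appeal to the standard fact that a connected regular graph of degree $d$ has $d$ as a simple largest eigenvalue with the all-ones eigenvector (citing Theorem 1.1.2 of the reference \cite{2009CRS}), and you simply unpack that fact via the row-sum/Gershgorin bound and Perron--Frobenius. Your explicit verification that $\mathcal{T}(n)$ is connected (column cliques feeding into the bottom-row clique) supplies a detail that the paper leaves implicit but that is genuinely required for the simplicity claim.
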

	\begin{proof}
		Since $\mathcal{T}(n)$ is $(2n-2)-$regular, as it is well known (see for instance Theorem 1.1.2 in \cite{2009CRS}) the result follows.
	\end{proof}
	
	\medskip
	
	Let's introduce the family of vectors
	$$
	\mathbf{T}_n = \{t_n^{(x,y)} \in \mathbb{R}^{T(n)} \mid x\in [n-3], y\in[x] \}
	$$
	where $t_n^{(x,y)}$ is the vector defined by
	
	\begin{equation}\label{eq-vectors--3-Tn}
		\big[t_n^{(x,y)}\big]_{(i,j)} = \begin{cases}
			t_{(i-x+1,j-y+1)}, & \text{ if } x\le i\le x+3 \text{ and } y\le j\le y+3, \\
			0, & \text{otherwise;}
		\end{cases}
	\end{equation}
	with $t=t_4^{(1,1)}=[0,1,-1,-1,0,1,0,1,-1,0]$ as presented in Figure~\ref{fig-Tn--3vectors}.
	
	\begin{figure}[h!]
		\centering
		\begin{tikzpicture}[scale=0.8]
			\begin{scope}[every node/.style={circle,thick,draw,minimum size=0.7cm},every text node part/.style={align=center}]
				\node (1) at (0,0) {0};
				\node (2) at (-0.5,-0.75) {1};
				\node (3) at (0.5,-0.75) {-1};
				\node (4) at (-1,-1.5) {-1};
				\node (5) at (0,-1.5) {0};
				\node (6) at (1,-1.5) {1};
				\node (7) at (-1.5,-2.25) {0};
				\node (8) at (-0.5,-2.25) {1};
				\node (9) at (0.5,-2.25) {-1};
				\node (10) at (1.5,-2.25) {0};
			\end{scope}
		\end{tikzpicture}
		
		\caption{Vector $t$.}\label{fig-Tn--3vectors}
	\end{figure}
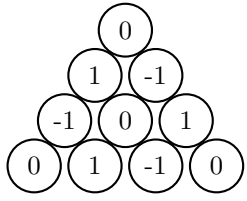
	
	Note that each vector of $\textbf{T}_n$ is obtained placing $t$ in all the possible ways over $\mathbb{T}_n$ and completing the remaining entries with zeros. The ordered pair $(x,y)$ in Equation \eqref{eq-vectors--3-Tn} corresponds to the entry in $\mathbb{T}(n)$ where the first coordinate of $t$ is placed.
The particular case of $\textbf{V}_6$ is presented in Figure \ref{fig-T6-vectors} and this construction is highlighted by the colors.
	
	\begin{figure}[h!]
		\centering
		\begin{tikzpicture}[scale=0.8]
			\begin{scope}[every node/.style={circle,thick,draw,minimum size=0.7cm},every text node part/.style={align=center}]
				\node (1) at (0,0) [circle,draw=blue!50,fill=blue!5] {0};
				\node (2) at (-0.5,-0.75) [circle,draw=blue!50,fill=blue!5] {1};
				\node (3) at (0.5,-0.75) [circle,draw=blue!50,fill=blue!5] {-1};
				\node (4) at (-1,-1.5) [circle,draw=blue!50,fill=blue!5] {-1};
				\node (5) at (0,-1.5) [circle,draw=blue!50,fill=blue!5] {0};
				\node (6) at (1,-1.5) [circle,draw=blue!50,fill=blue!5] {1};
				\node (7) at (-1.5,-2.25) [circle,draw=blue!50,fill=blue!5] {0};
				\node (8) at (-0.5,-2.25) [circle,draw=blue!50,fill=blue!5] {1};
				\node (9) at (0.5,-2.25) [circle,draw=blue!50,fill=blue!5] {-1};
				\node (10) at (1.5,-2.25) [circle,draw=blue!50,fill=blue!5] {0};
				\node (11) at (-2,-3) [circle,draw=blue!50] {0};
				\node (12) at (-1,-3) [circle,draw=blue!50] {0};
				\node (13) at (0,-3) [circle,draw=blue!50] {0};
				\node (14) at (1,-3) [circle,draw=blue!50] {0};
				\node (15) at (2,-3) [circle,draw=blue!50] {0};
				\node (16) at (-2.5,-3.75) [circle,draw=blue!50] {0};
				\node (17) at (-1.5,-3.75) [circle,draw=blue!50] {0};
				\node (18) at (-0.5,-3.75) [circle,draw=blue!50] {0};
				\node (19) at (0.5,-3.75) [circle,draw=blue!50] {0};
				\node (20) at (1.5,-3.75) [circle,draw=blue!50] {0};
				\node (21) at (2.5,-3.75) [circle,draw=blue!50] {0};
			\end{scope}
			\draw (0,-4.8) node {$t_6^{(1,1)}$};
		\end{tikzpicture}	
		\begin{tikzpicture}[scale=0.8]
			\begin{scope}[every node/.style={circle,thick,draw,minimum size=0.7cm},every text node part/.style={align=center}]
				\node (1) at (0,0) [circle,draw=blue!50] {0};
				\node (2) at (-0.5,-0.75) [circle,draw=blue!50,fill=blue!5] {0};
				\node (3) at (0.5,-0.75) [circle,draw=blue!50] {0};
				\node (4) at (-1,-1.5) [circle,draw=blue!50,fill=blue!5] {1};
				\node (5) at (0,-1.5) [circle,draw=blue!50,fill=blue!5] {-1};
				\node (6) at (1,-1.5) [circle,draw=blue!50] {0};
				\node (7) at (-1.5,-2.25) [circle,draw=blue!50,fill=blue!5] {-1};
				\node (8) at (-0.5,-2.25) [circle,draw=blue!50,fill=blue!5] {0};
				\node (9) at (0.5,-2.25) [circle,draw=blue!50,fill=blue!5] {1};
				\node (10) at (1.5,-2.25) [circle,draw=blue!50] {0};
				\node (11) at (-2,-3) [circle,draw=blue!50,fill=blue!5] {0};
				\node (12) at (-1,-3) [circle,draw=blue!50,fill=blue!5] {1};
				\node (13) at (0,-3) [circle,draw=blue!50,fill=blue!5] {-1};
				\node (14) at (1,-3) [circle,draw=blue!50,fill=blue!5] {0};
				\node (15) at (2,-3) [circle,draw=blue!50] {0};
				\node (16) at (-2.5,-3.75) [circle,draw=blue!50] {0};
				\node (17) at (-1.5,-3.75) [circle,draw=blue!50] {0};
				\node (18) at (-0.5,-3.75) [circle,draw=blue!50] {0};
				\node (19) at (0.5,-3.75) [circle,draw=blue!50] {0};
				\node (20) at (1.5,-3.75) [circle,draw=blue!50] {0};
				\node (21) at (2.5,-3.75) [circle,draw=blue!50] {0};
			\end{scope}
			\draw (0,-4.8) node {$t_6^{(2,1)}$};
		\end{tikzpicture}
		\begin{tikzpicture}[scale=0.8]
			\begin{scope}[every node/.style={circle,thick,draw,minimum size=0.7cm},every text node part/.style={align=center}]
				\node (1) at (0,0) [circle,draw=blue!50] {0};
				\node (2) at (-0.5,-0.75) [circle,draw=blue!50] {0};
				\node (3) at (0.5,-0.75) [circle,draw=blue!50,fill=blue!5] {0};
				\node (4) at (-1,-1.5) [circle,draw=blue!50] {0};
				\node (5) at (0,-1.5) [circle,draw=blue!50,fill=blue!5] {1};
				\node (6) at (1,-1.5) [circle,draw=blue!50,fill=blue!5] {-1};
				\node (7) at (-1.5,-2.25) [circle,draw=blue!50] {0};
				\node (8) at (-0.5,-2.25) [circle,draw=blue!50,fill=blue!5] {-1};
				\node (9) at (0.5,-2.25) [circle,draw=blue!50,fill=blue!5] {0};
				\node (10) at (1.5,-2.25) [circle,draw=blue!50,fill=blue!5] {1};
				\node (11) at (-2,-3) [circle,draw=blue!50] {0};
				\node (12) at (-1,-3) [circle,draw=blue!50,fill=blue!5] {0};
				\node (13) at (0,-3) [circle,draw=blue!50,fill=blue!5] {1};
				\node (14) at (1,-3) [circle,draw=blue!50,fill=blue!5] {-1};
				\node (15) at (2,-3) [circle,draw=blue!50,fill=blue!5] {0};
				\node (16) at (-2.5,-3.75) [circle,draw=blue!50] {0};
				\node (17) at (-1.5,-3.75) [circle,draw=blue!50] {0};
				\node (18) at (-0.5,-3.75) [circle,draw=blue!50] {0};
				\node (19) at (0.5,-3.75) [circle,draw=blue!50] {0};
				\node (20) at (1.5,-3.75) [circle,draw=blue!50] {0};
				\node (21) at (2.5,-3.75) [circle,draw=blue!50] {0};
			\end{scope}
			\draw (0,-4.8) node {$t_6^{(2,2)}$};
		\end{tikzpicture}
		
		\vspace{0.3cm}
		
		\begin{tikzpicture}[scale=0.8]
			\begin{scope}[every node/.style={circle,thick,draw,minimum size=0.7cm},every text node part/.style={align=center}]
				\node (1) at (0,0) [circle,draw=blue!50] {0};
				\node (2) at (-0.5,-0.75) [circle,draw=blue!50] {0};
				\node (3) at (0.5,-0.75) [circle,draw=blue!50] {0};
				\node (4) at (-1,-1.5) [circle,draw=blue!50,fill=blue!5] {0};
				\node (5) at (0,-1.5) [circle,draw=blue!50] {0};
				\node (6) at (1,-1.5) [circle,draw=blue!50] {0};
				\node (7) at (-1.5,-2.25) [circle,draw=blue!50,fill=blue!5] {1};
				\node (8) at (-0.5,-2.25) [circle,draw=blue!50,fill=blue!5] {-1};
				\node (9) at (0.5,-2.25) [circle,draw=blue!50] {0};
				\node (10) at (1.5,-2.25) [circle,draw=blue!50] {0};
				\node (11) at (-2,-3) [circle,draw=blue!50,fill=blue!5] {-1};
				\node (12) at (-1,-3) [circle,draw=blue!50,fill=blue!5] {0};
				\node (13) at (0,-3) [circle,draw=blue!50,fill=blue!5] {1};
				\node (14) at (1,-3) [circle,draw=blue!50] {0};
				\node (15) at (2,-3) [circle,draw=blue!50] {0};
				\node (16) at (-2.5,-3.75) [circle,draw=blue!50,fill=blue!5] {0};
				\node (17) at (-1.5,-3.75) [circle,draw=blue!50,fill=blue!5] {1};
				\node (18) at (-0.5,-3.75) [circle,draw=blue!50,fill=blue!5] {-1};
				\node (19) at (0.5,-3.75) [circle,draw=blue!50,fill=blue!5] {0};
				\node (20) at (1.5,-3.75) [circle,draw=blue!50] {0};
				\node (21) at (2.5,-3.75) [circle,draw=blue!50] {0};
			\end{scope}
			\draw (0,-4.8) node {$t_6^{(3,1)}$};
		\end{tikzpicture}	
		\begin{tikzpicture}[scale=0.8]
			\begin{scope}[every node/.style={circle,thick,draw,minimum size=0.7cm},every text node part/.style={align=center}]
				\node (1) at (0,0) [circle,draw=blue!50] {0};
				\node (2) at (-0.5,-0.75) [circle,draw=blue!50] {0};
				\node (3) at (0.5,-0.75) [circle,draw=blue!50] {0};
				\node (4) at (-1,-1.5) [circle,draw=blue!50] {0};
				\node (5) at (0,-1.5) [circle,draw=blue!50,fill=blue!5] {0};
				\node (6) at (1,-1.5) [circle,draw=blue!50] {0};
				\node (7) at (-1.5,-2.25) [circle,draw=blue!50] {0};
				\node (8) at (-0.5,-2.25) [circle,draw=blue!50,fill=blue!5] {1};
				\node (9) at (0.5,-2.25) [circle,draw=blue!50,fill=blue!5] {-1};
				\node (10) at (1.5,-2.25) [circle,draw=blue!50] {0};
				\node (11) at (-2,-3) [circle,draw=blue!50] {0};
				\node (12) at (-1,-3) [circle,draw=blue!50,fill=blue!5] {-1};
				\node (13) at (0,-3) [circle,draw=blue!50,fill=blue!5] {0};
				\node (14) at (1,-3) [circle,draw=blue!50,fill=blue!5] {1};
				\node (15) at (2,-3) [circle,draw=blue!50] {0};
				\node (16) at (-2.5,-3.75) [circle,draw=blue!50] {0};
				\node (17) at (-1.5,-3.75) [circle,draw=blue!50,fill=blue!5] {0};
				\node (18) at (-0.5,-3.75) [circle,draw=blue!50,fill=blue!5] {1};
				\node (19) at (0.5,-3.75) [circle,draw=blue!50,fill=blue!5] {-1};
				\node (20) at (1.5,-3.75) [circle,draw=blue!50,fill=blue!5] {0};
				\node (21) at (2.5,-3.75) [circle,draw=blue!50] {0};
			\end{scope}
			\draw (0,-4.8) node {$t_6^{(3,2)}$};
		\end{tikzpicture}
		\begin{tikzpicture}[scale=0.8]
			\begin{scope}[every node/.style={circle,thick,draw,minimum size=0.7cm},every text node part/.style={align=center}]
				\node (1) at (0,0) [circle,draw=blue!50] {0};
				\node (2) at (-0.5,-0.75) [circle,draw=blue!50] {0};
				\node (3) at (0.5,-0.75) [circle,draw=blue!50] {0};
				\node (4) at (-1,-1.5) [circle,draw=blue!50] {0};
				\node (5) at (0,-1.5) [circle,draw=blue!50] {0};
				\node (6) at (1,-1.5) [circle,draw=blue!50,fill=blue!5] {0};
				\node (7) at (-1.5,-2.25) [circle,draw=blue!50] {0};
				\node (8) at (-0.5,-2.25) [circle,draw=blue!50] {0};
				\node (9) at (0.5,-2.25) [circle,draw=blue!50,fill=blue!5] {1};
				\node (10) at (1.5,-2.25) [circle,draw=blue!50,fill=blue!5] {-1};
				\node (11) at (-2,-3) [circle,draw=blue!50] {0};
				\node (12) at (-1,-3) [circle,draw=blue!50] {0};
				\node (13) at (0,-3) [circle,draw=blue!50,fill=blue!5] {-1};
				\node (14) at (1,-3) [circle,draw=blue!50,fill=blue!5] {0};
				\node (15) at (2,-3) [circle,draw=blue!50,fill=blue!5] {1};
				\node (16) at (-2.5,-3.75) [circle,draw=blue!50] {0};
				\node (17) at (-1.5,-3.75) [circle,draw=blue!50] {0};
				\node (18) at (-0.5,-3.75) [circle,draw=blue!50,fill=blue!5] {0};
				\node (19) at (0.5,-3.75) [circle,draw=blue!50,fill=blue!5] {1};
				\node (20) at (1.5,-3.75) [circle,draw=blue!50,fill=blue!5] {-1};
				\node (21) at (2.5,-3.75) [circle,draw=blue!50,fill=blue!5] {0};
			\end{scope}
			\draw (0,-4.8) node {$t_6^{(3,3)}$};
		\end{tikzpicture}
		\caption{Vectors of $\textbf{V}_6$.}\label{fig-T6-vectors}
	\end{figure}
	
	\medskip
	
	\begin{theorem}\label{Th_Least}
		For $n\ge4$, $-3$ is the least eigenvalue of $\mathcal{T}(n)$ with multiplicity $T(n-3)$ and $\mathbf{T}_n$ is a basis for $\mathcal{E}_{\mathcal{T}(n)}(-3)$.
	\end{theorem}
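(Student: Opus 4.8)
The plan is to apply Theorem~\ref{Main_result_1} to the natural edge clique partition $P$ of $\mathcal{T}(n)$ whose cliques are the nontrivial rows, columns and diagonals described in Proposition~\ref{prop-2n-2-regulargraph}. First I would check that $P$ really is an ECP: by the adjacency rule two distinct vertices share a row, a column \emph{or} a diagonal, and any two of the conditions $i=k$, $j=\ell$, $i-j=k-\ell$ together force $(i,j)=(k,\ell)$, so each edge lies in exactly one such clique, while each line of at least two vertices is a clique. Counting the lines through $(i,j)$ then shows that its clique degree is $3$ except at the three corners $(1,1)$, $(n,1)$, $(n,n)$, each of which lies on exactly one singleton line and hence has clique degree $2$. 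Thus $m_G(P)=3$, and part~\ref{marca} of Theorem~\ref{Main_result_1} gives immediately that every eigenvalue of $\mathcal{T}(n)$ is at least $-3$.

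Next I would use part~\ref{marca2}. Writing $\phi(x)=(S^r_x,S^c_x,S^d_x)$, conditions~\ref{cond1}--\ref{cond2} say that $-3$ is an eigenvalue exactly when some $x\ne\mathbf 0$ has vanishing sums over all nontrivial lines and is zero at the three corners; since the three singleton lines are precisely those corners, these two requirements combine into $S^r_x=S^c_x=S^d_x=\mathbf 0$, i.e.\ $x\in\ker\phi$. Hence $\mathcal{E}_{\mathcal T(n)}(-3)=\ker\phi$. To populate this kernel I would verify $\mathbf T_n\subseteq\ker\phi$: the support of $t_n^{(x,y)}$ is a translated copy of the pattern $t$ on a sub-triangle congruent to $\mathbb T_4$, and every row, column or diagonal of $\mathbb T_n$ either misses this sub-triangle or meets it in a \emph{complete} row, column or diagonal of $t$ (because $y\le x$ keeps each such segment inside the corresponding full line of $\mathbb T_n$). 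The sum over that line of $\mathbb T_n$ therefore equals the matching line sum of $t$, which one checks to be $0$ on the base board; in particular the singleton lines pick out the corners of $t$, where $t$ is $0$. This shows each $t_n^{(x,y)}$ is an eigenvector, so $-3$ is an eigenvalue.

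It remains to prove that $\mathbf T_n$ is independent and that $\dim\ker\phi=T(n-3)$. For independence I would evaluate the $t_n^{(x',y')}$ at the points $(x+1,y)$; as $(x,y)$ ranges these are exactly the points of $\mathcal R=\{(i,j):2\le i\le n-2,\ 1\le j\le i-1\}$. Ordering the index pairs by $x$ and then by $y$, the matrix $\big[\,[t_n^{(x',y')}]_{(x+1,y)}\,\big]$ is block lower triangular (the entry vanishes whenever $x'>x$, because the first row of $t$ is zero) with diagonal blocks that are bidiagonal, having $t_{(2,1)}=1$ on the diagonal and $t_{(2,2)}=-1$ just below; these blocks have determinant $1$, so the whole matrix is invertible and $\mathbf T_n$ is linearly independent, giving $\dim\ker\phi\ge T(n-3)$. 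For the reverse bound I would show that restriction to $\mathcal R$ is injective on $\ker\phi$: if $x\in\ker\phi$ vanishes on $\mathcal R$, the row sums force $x_{(i,i)}=0$ for $1\le i\le n-2$ (using $x_{(1,1)}=0$), so $x$ vanishes off rows $n-1$ and $n$; the column and diagonal sums then give $x_{(n,j)}=-x_{(n-1,j)}$ and $x_{(n,j+1)}=-x_{(n-1,j)}$, whence $x_{(n-1,j)}=x_{(n-1,j+1)}$ for all $j$, so row $n-1$ is constant, the row-$(n-1)$ sum forces that constant to be $0$ (as $n\ge 4$), and row $n$ then vanishes too. Thus $\dim\ker\phi\le|\mathcal R|=T(n-3)$, equality holds, and $\mathbf T_n$ is a basis.

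The routine parts are the ECP verification and the fact that the translated patterns are eigenvectors; the hard part will be the exact dimension count, since the explicit eigenvectors only give the lower bound and one must pin down $\dim\ker\phi$ from above. I expect the peeling/reconstruction argument above—showing that the values on $\mathcal R$ already determine $x\in\ker\phi$—to be the crucial and most delicate step, together with the careful bookkeeping of the singleton lines and corners needed to turn conditions~\ref{cond1}--\ref{cond2} into the clean description $\ker\phi$.
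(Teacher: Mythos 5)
Your proposal is correct and follows essentially the same route as the paper's proof: the same row/column/diagonal edge clique partition fed into Theorem~\ref{Main_result_1}, the same verification that the vectors of $\mathbf{T}_n$ have vanishing line sums, a triangularity argument for their linear independence, and the upper bound on the multiplicity obtained by showing that the $T(n-3)$ entries on $\{(i,j)\mid 2\le i\le n-2,\ 1\le j\le i-1\}$ determine every vector of $\mathcal{E}_{\mathcal{T}(n)}(-3)$. Your block-bidiagonal evaluation matrix and your explicit resolution of the circular dependency between rows $n-1$ and $n$ (constancy of row $n-1$ plus its row sum) are tidier write-ups of the paper's corresponding steps, but the underlying ideas coincide.
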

	\begin{proof}
		Applying Theorem~\ref{Main_result_1} to $\mathcal{T}(n)$ considering the edge clique partition where the vertices of each row, column and diagonal of $\mathcal{T}(n)$ is a clique, it follows that each vector of $\mathbf{T}_n$ is an eigenvector of $\mathcal{T}(n)$ associated with $-3$.
		
		Now consider that $\mathbf{T}_n$ is a set of linearly dependent vectors. Then it exists $ \alpha_{(1,1)},\alpha_{(2,1)},\alpha_{(2,2)},$ $\ldots,\alpha_{(n-3,n-4)},\alpha_{(n-3,n-3)}\in\mathbb{R}^{T(n)}$ not simultaneously zero, such that
		
		\begin{equation}\label{eq-proof-leasteigenvalueTn}
			s=\sum_{(i,j)\in[n]\times[1,i]}\alpha_{(i,j)}t_n^{(i,j)}=0.
		\end{equation}
		
		Let $\ell=T(i-1)+j$ be least integer such that $\alpha_{(i,j)}\not=0$. By the construction of $\mathbf{T}_n$, $s_{\ell+1}=\alpha_{(i,j)}$ and so, by the Equation \eqref{eq-proof-leasteigenvalueTn}, $s_{\ell+1}=0=\alpha_{(i,j)}$ and we get a contradiction. Thus, $\mathbf{T}_n$ is set of linearly independent vectors and $\dim(\mathcal{E}_{\mathcal{T}(n)}(-3))\ge|\mathbf{T}_n|=T(n-3)$.
		
		Let $I\subseteq\{(i,j)\mid i\in[2,n-2],j\in[1,i-1]\}$ be the set of indices $(p,q)$ such that $x_{(p,q)}$, the $(T(p-1)+q)$-th component of $X=(x_{(i,j)})\in\mathcal{E}_{\mathcal{T}(n)}(-3)$ are entirely determined by the entries $x_{(i,j)}$ with $i\in[2,n-2]$ and $j\in[i]$.
		
		By Theorem~\ref{Main_result_1}, we know that the sum of the entries of $x$ by rows, column, and diagonal is zero, then we get that $x_{(1,1)}=x_{(n,1)}=x_{(n,n)}=0$ and $$x_{(i,i)}=-\sum_{j=1}^{i-1}x_{(i,j)}\text{, for }i\in[2,n-1]$$ and so $\{(i,j)\mid i\in[2,n-2],j\in[1,i-1]\}\subset I$. Similarly, we can now state that
		\begin{eqnarray*}
			x_{(n-1,1)} &=& -\sum_{i\in\left([n-2]\cup\{n\}\right)} x_{(i,1)} \quad \text{ and}\\
			x_{(n-1,n-1)} &=& -\sum_{i\in\left([n-2]\cup\{n\}\right)} x_{(i,i)},
		\end{eqnarray*}
		and so $\{(n-1,1),(n-1,n-1)\}\subset I$.
		
		Now, considering the parameters $j_1\in[2,n-1]$ and $j_2=[2,n-2]$, we can obtain
		
		\begin{eqnarray*}
			x_{(n,j_1)} &=& -\sum_{x-y=n-j_1\wedge y\not=j_1} x_{((x,y))} \quad \text{ and}\\
			x_{(n-1,j_2)} &=& -\sum_{x+y=n-1+j_2\wedge y\not=j_2} x_{(x,y)},
		\end{eqnarray*}
		and we conclude that the set of remaining entries is included in $I$, that is, $\{(i,j)\mid i\in\{n-1,n\},j\in[2,i-1]\}\subset I$.
		
		Thus $\dim(\mathcal{E}_{\mathcal{T}(n)}(-3))\le|\mathbf{T}_n|=T(n-3)$ and so $\dim(\mathcal{E}_{\mathcal{T}(n)}(-3))=T(n-3)$.
	\end{proof}
	
	\medskip
	
	From this subsection, we conclude that $-3$ is the least eigenvalue of $\mathcal{T}(n)$ with multiplicity $T(n-3)$ and $2n-2$ is its largest eigenvalue which is simple.
	
	\bigskip
	
	In the next two subsections, vectors $u_{n,\lambda},v_{n,\lambda},x_{n,\lambda}$ and $y_{n,\lambda}$ are introduced. There will be a sequence of integer eigenvalues $\lambda$ of $\mathcal{T}(n)$ with the eigenvectors $u_{n,\lambda}$ and $v_{n,\lambda}$ associated with the eigenvalues of the first part of that sequence, and $x_{n,\lambda}$ and the eigenvectors $y_{n,\lambda}$ associated with the eigenvalues of the second part. Separately into two parts, these vectors will be defined and investigated, in order to conclude that the integers of both sequences are eigenvalues of $\mathcal{T}(n)$.
	
	\subsection{First sequence of integer eigenvalues}\label{FirstSequence}
	
	For $n\ge4$ and $\lambda\in\left[-2,\lfloor\frac{n-7}{2}\rfloor\right] \cap \mathbb{Z}$, consider the vectors $u_{n,\lambda}, u_{n,\lambda}^1, u_{n,\lambda}^2$
	such that
	
	\begin{equation}\label{eq-def-vetorU}
		\begin{split}
			u_{n,\lambda}&=u_{n,\lambda}^1+u_{n,\lambda}^2,
		\end{split}
	\end{equation}
	and
	\begin{equation}
		\begin{split}
			\left[u_{n,\lambda}^1\right]_{(i,j)} &= \begin{cases}
				n-6-2\lambda, &\text{if } i=\lambda+3, \\
				-1, &\text{if } i\ge\lambda+3 \text{ and } i-j\le\lambda+2, \\
				0, &\text{otherwise;}
			\end{cases}\\
			\left[u_{n,\lambda}^2\right]_{(i,j)} &= \begin{cases}
				-(n-6-2\lambda), &\text{if } i-j=n-\lambda-3, \\
				1, &\text{if } i\ge n-\lambda-2 \text{ and }i-j\le n-\lambda-4, \\
				0, &\text{otherwise.}
			\end{cases}
		\end{split}
	\end{equation}
	
	A different and simple way to define these vectors is by the sum of $RCD$-vectors, and so
	\begin{equation}
		\begin{split}
			u_{n,\lambda}^1 = &- (D_{n,0} + \cdots + D_{n,\lambda+2}) \\
			&+ (R_{n,1} + \cdots + R_{n,\lambda+3}) \\
			&+ (n-6-2\lambda) R_{n,\lambda+3} \text{ and} \\
			u_{n,\lambda}^2 = &\phantom{-} (R_{n,n} + \cdots + R_{n,n-\lambda-2}) \\
			&- (D_{n,n-1} + \cdots + D_{n,n-\lambda-3}) \\
			&- (n-6-2\lambda) D_{n,n-\lambda-3}.
		\end{split}
	\end{equation}
	
	An example of these vectors is presented in \cite[Fig. 6.8]{Costa2024}, for $n=10$ and some values of $\lambda$.

	



	Note that the sum-vectors of $u_{n,\lambda}$ are the following.
	
	{\small \begin{equation}\allowdisplaybreaks \label{eq-totalsumU}
		\begin{split}
			S^r_{u_{n,\lambda}}(i) &= \begin{cases}
				                       0, &\text{if } i\le \lambda+2 \text{ or } i \ge n-\lambda-2,\\
				                       (\lambda+3)(n-6-2\lambda), &\text{if } i=\lambda+3,\\
				                      -(\lambda+3), &\text{if } \lambda+4 \le i \le n-\lambda-3;\\
			\end{cases} \\
			S^c_{u_{n,\lambda}}(j) &= 0, \qquad \qquad \qquad \qquad \quad \forall j\in[n]; \\
			S^d_{u_{n,\lambda}} (i-j+1) &= \begin{cases}
				0, &\text{if } i-j\le\lambda+2 \text{ or } i-j\ge n-\lambda-2 ,\\
				\lambda+3, &\text{if } \lambda+3 \le i-j\le n-\lambda-4,\\
				-(\lambda+3)(n-6-2\lambda), &\text{if } i-j=n-\lambda-3.\\
			\end{cases}\\
		\end{split}
	\end{equation}}
	
	\medskip
	
	Consider the following lemma.
	
	\begin{lemma}\label{lemma-u}
		For $n\ge4$ and $\lambda\in\left[-2,\lfloor\frac{n-7}{2}\rfloor\right] \cap \mathbb{Z}$, $u_{n,\lambda}$ is an eigenvector of $\mathcal{T}(n)$ associated with $\lambda$.
	\end{lemma}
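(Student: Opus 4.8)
The plan is to reduce the eigenvalue equation $A\,u_{n,\lambda}=\lambda\,u_{n,\lambda}$, where $A$ is the adjacency matrix of $\mathcal{T}(n)$, to a scalar identity that can be checked region by region on $\mathbb{T}_n$, using the sum-vectors recorded in \eqref{eq-totalsumU}. The starting point is the observation that, by the definition of $\mathcal{T}(n)$, the neighbours of a vertex $(i,j)$ are exactly the other vertices lying in its row $i$, its column $j$, or its diagonal $i-j$, and that these three sets meet only in $(i,j)$ itself. Hence, for any $v\in\mathbb{R}^{T(n)}$,
\[
(Av)_{(i,j)} = S^r_v(i) + S^c_v(j) + S^d_v(i-j+1) - 3\,v_{(i,j)},
\]
where the term $-3\,v_{(i,j)}$ compensates for $(i,j)$ being counted once in each of the three sum-vectors although it is not adjacent to itself.

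First I would use this to restate the goal: $u_{n,\lambda}$ is an eigenvector associated with $\lambda$ if and only if $(Au_{n,\lambda})_{(i,j)}=\lambda\,[u_{n,\lambda}]_{(i,j)}$ for every vertex, which by the displayed formula is equivalent to
\[
S^r_{u_{n,\lambda}}(i) + S^c_{u_{n,\lambda}}(j) + S^d_{u_{n,\lambda}}(i-j+1) = (\lambda+3)\,[u_{n,\lambda}]_{(i,j)}.
\]
Since $S^c_{u_{n,\lambda}}\equiv 0$ by \eqref{eq-totalsumU}, the column sum drops out and it remains only to prove the identity
\begin{equation}\label{eq-star-plan}
S^r_{u_{n,\lambda}}(i) + S^d_{u_{n,\lambda}}(i-j+1) = (\lambda+3)\,[u_{n,\lambda}]_{(i,j)}
\end{equation}
for all $(i,j)$. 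The branches of $S^r_{u_{n,\lambda}}$ and $S^d_{u_{n,\lambda}}$ are already listed in \eqref{eq-totalsumU} (and may be read off directly from the $RCD$-decomposition of $u_{n,\lambda}^1$ and $u_{n,\lambda}^2$), so \eqref{eq-star-plan} is a finite check.

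Then I would carry out the verification by splitting $\mathbb{T}_n$ according to the thresholds appearing both in \eqref{eq-def-vetorU} and in the sum-vectors: the row index $i$ against $\lambda+3$ and $n-\lambda-2$, and the diagonal index $i-j$ against $\lambda+2$, $\lambda+3$, $n-\lambda-4$, $n-\lambda-3$. On each region one reads off the relevant branch of $S^r_{u_{n,\lambda}}$ and $S^d_{u_{n,\lambda}}$ and compares it with $(\lambda+3)$ times the corresponding branch of $[u_{n,\lambda}]_{(i,j)}$. For instance, on the row $i=\lambda+3$ one has $S^r=(\lambda+3)(n-6-2\lambda)$, $S^d=0$ and $[u_{n,\lambda}]_{(i,j)}=n-6-2\lambda$, so both sides of \eqref{eq-star-plan} equal $(\lambda+3)(n-6-2\lambda)$; on the diagonal $i-j=n-\lambda-3$ one has $S^r=0$, $S^d=-(\lambda+3)(n-6-2\lambda)$ and $[u_{n,\lambda}]_{(i,j)}=-(n-6-2\lambda)$, matching again; the remaining regions produce the values $-(\lambda+3)$, $+(\lambda+3)$, or $0$ on both sides in the same fashion.

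The main obstacle is purely the bookkeeping of this case analysis: one must ensure that the regions cut out by the sum-vectors coincide exactly with those defining the entries of $u_{n,\lambda}$, and that the places where the supports of $u_{n,\lambda}^1$ and $u_{n,\lambda}^2$ meet are treated correctly (for example where a $-1$ from $u_{n,\lambda}^1$ meets a $+1$ from $u_{n,\lambda}^2$ and the entry cancels to $0$). Here the hypothesis $\lambda\le\lfloor\frac{n-7}{2}\rfloor$, which is equivalent to $\lambda+3\le n-\lambda-4$, is exactly what keeps the index intervals $[\lambda+4,n-\lambda-3]$ and $[\lambda+3,n-\lambda-4]$ nonempty and well defined and prevents the two ``heavy'' lines $i=\lambda+3$ and $i-j=n-\lambda-3$ from interfering, so that every region yields a clean equality in \eqref{eq-star-plan}.
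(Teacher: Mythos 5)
Your proposal is correct and follows essentially the same route as the paper: both reduce $Au_{n,\lambda}=\lambda u_{n,\lambda}$ to the identity $S^r_{u_{n,\lambda}}(i)+S^d_{u_{n,\lambda}}(i-j+1)=(\lambda+3)\,[u_{n,\lambda}]_{(i,j)}$ (the column sums vanishing by \eqref{eq-totalsumU}), and then verify it region by region on $\mathbb{T}_n$ using the thresholds $i=\lambda+3$, $i-j=n-\lambda-3$, etc. The only difference is presentational: the paper delegates the case-by-case table to \cite[Table 6.2]{Costa2024}, whereas you sketch the same finite check directly, including the cancellation region where $u^1_{n,\lambda}$ and $u^2_{n,\lambda}$ overlap.
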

	\begin{proof}
		Let $A$ be the adjacency matrix of $\mathcal{T}(n)$. Consider the following equation
		
		\begin{equation}\label{eq-proof-vetoru}
			\begin{split}
				Au_{n,\lambda}(i,j) &= \sum_{(p,q)\sim(i,j)}u_{n,\lambda}(p,q)\\
				&= \underbrace{\sum_{q\in[i]} u_{n,\lambda}(i,q)}_\text{(A)} + \underbrace{\sum_{p\in[j,n]} u_{n,\lambda}(p,j)}_{0} + \underbrace{\sum_{p-q=i-j} u_{n,\lambda}(p,q)}_\text{(B)} - 3u_{n,\lambda}(i,j)
			\end{split}
		\end{equation}
		
In \cite[Table 6.2]{Costa2024}, some parts of Equation \eqref{eq-proof-vetoru} are presented. The columns (A)$+$(B) and $(\lambda+3)u_{n,\lambda}(i,j)$~are complemented with the respective values of Equations \eqref{eq-totalsumU} and \eqref{eq-def-vetorU}, respectively. Since they are equal for each condition, the proof is done.
	\end{proof}

	\medskip

	
	Now consider the vectors $ v_{n,\lambda}, v_{n,\lambda}^1, $ $v_{n,\lambda}^2,v_{n,\lambda}^3, v_{n,\lambda}^{3,k}\in\mathbb{R}^{T(n)}$ for some $n\in\mathbb{N}$ and for all $k\in[\min\{\lambda+3,n-7-2\lambda\}]$ such that
	
	\begin{equation}
		\begin{split}
			v_{n,\lambda}&=v_{n,\lambda}^1+v_{n,\lambda}^2+v_{n,\lambda}^3,\\
			v_{n,\lambda}^3&=\sum_{k=1}^{\min\{\lambda+3,n-2\lambda-7\}} v_{n,\lambda}^{3,k}
		\end{split}
	\end{equation}
	and
	\begin{equation}\label{eq-def-vetorV}
		\begin{split}
			\left[v_{n,\lambda}^1\right]_{(i,j)} &= \begin{cases}
				-T(n-7-2\lambda), &\text{if } i=\lambda+3,\\
				n-7-2\lambda, &\text{if } i\ge\lambda+4 \text{ and } j\le\lambda+3 \text{ and } i-j\le n-(\lambda+4),\\
				0, &\text{otherwise;}
			\end{cases}\\
			\left[v_{n,\lambda}^2\right]_{(i,j)} &= \begin{cases}
				-T(n-7-2\lambda), &\text{if } i=\lambda+3,\\
				n-7-2\lambda,  &\text{if } i\ge\lambda+4 \text{ and } j\le n-(\lambda+3) \text{ and } i-j\le\lambda+2,\\
				0, &\text{otherwise;}
			\end{cases}\\
			\left[v_{n,\lambda}^{3,k}\right]_{(i,j)} &= \begin{cases}
				-2, & \text{if } \lambda+4+k\le i \le n-k \text{ and}\\
				& \phantom{if } k+1 \le j \le n-(\lambda+3+k) \text{ and}\\
				& \phantom{if } k\le i-j \le n-(\lambda+4+k),\\
				0,& \text{otherwise.}
			\end{cases}
		\end{split}
	\end{equation}

	
An example of these vectors is presented in Figure \ref{fig-vet-vv1v2}, for $n=10$ and some values of $\lambda$. Other examples of these vectors are presented in \cite[Appendix A]{Costa2024}.
	
\begin{remark}
The vector $v_{n,\lambda}^3$ is easily described by words. Its non-null entries form a hexagonal shape inside $\mathbb{T}_n$, where the width of the hexagon sides are alternately $\ell_1=\lambda+3$ and $\ell_2=n-7-2\lambda$. Note that we are considering that the triangular form presented in $v_{n,\lambda}^3$ for $\lambda=-2$ is a hexagon where the width of its sides is 1 and $\ell_2$.

Each hexagon has $\min\{\ell_1,\ell_2\}$ layers. The more peripheral layer is $-2$ and going to the inner layers, decreasing by 2 units.
		
The $(\lambda+5)^{\text{th}}$ and the $(n-1)^{\text{th}}$ rows are the first and last non-null rows of $v_{n,\lambda}^3$, respectively, and the one with more non-null entries (the ``largest'' row in the hexagon) is the $(n-\lambda-3)^{\text{th}}$.
		
Furthermore, when $n$ is odd and $\lambda=\frac{n-7}{2}$, the vector $v_{n,\lambda}^3$ is undefined and so it is $v_{n,\lambda}$.
\end{remark}
	
In what follows, the sum-vectors of $v_{n,\lambda}^1$, $v_{n,\lambda}^2$ and $v_{n,\lambda}^3$ are presented.

	\begin{landscape}
		\begin{figure}[h!]
			\centering
			\vspace{-0.5cm}
			\includegraphics[width=23cm]{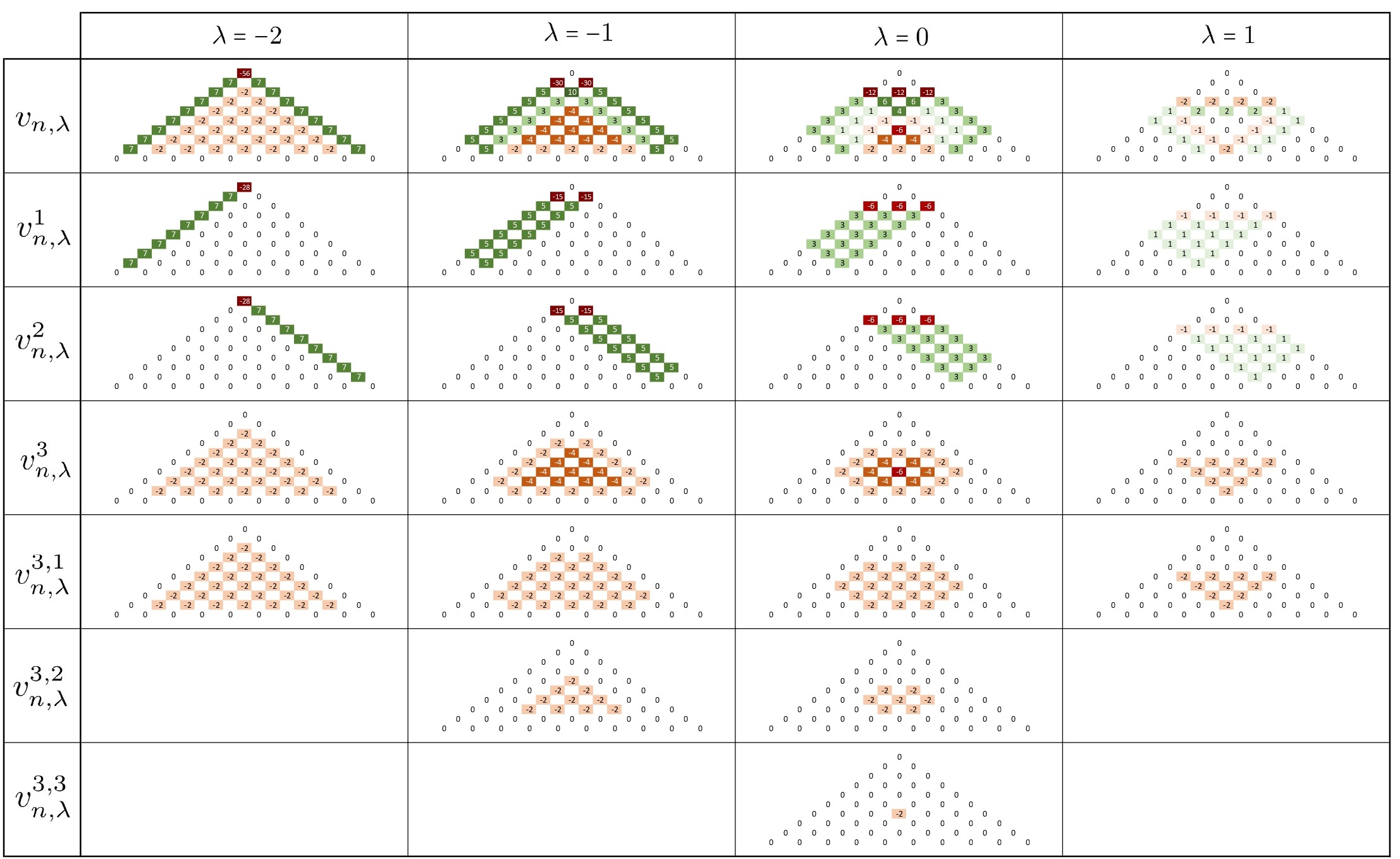}
			\caption{Vectors $v_{n,\lambda}, v_{n,\lambda}^1, v_{n,\lambda}^2$ and $v_{n,\lambda}^3$ for $n=10$ and $\lambda=-2,-1,0,1$.}\label{fig-vet-vv1v2}
		\end{figure}
	\end{landscape}

	\begin{equation}\allowdisplaybreaks
		\begin{split}
			S^r_{v_{n,\lambda}^1}(i) &= \begin{cases}
				0, &\text{if } i\le\lambda+2 \text{ or } i=n,\\
				-T(n-7-2\lambda)(\lambda+3), & \text{if } i=\lambda+3, \\
				(n-7-2\lambda)(\lambda+3), & \text{if } \lambda+4\le i \le n-(\lambda+3), \\
				(n-7-2\lambda)(n-i),& \text{if } n-(\lambda+2)\le i \le n-1; \\
			\end{cases}\\
			S^c_{v_{n,\lambda}^1}(j) &= \begin{cases}
				T(n-7+2\lambda)+(n-7-2\lambda)(j-1), & \text{if } j\le\lambda+3, \\
				0, &\text{ otherwise};
			\end{cases}\\
		\end{split}
	\end{equation}

	\begin{equation}
			S^d_{v_{n,\lambda}^1} (i-j+1) = \begin{cases}
				-T(n-7-2\lambda)+(i-j)(n-7-2\lambda), & \text{if } 0\le i-j \le \lambda+2, \\
				(\lambda+3)(n-7-2\lambda), & \text{if } \lambda+3\le i-j \le n-(\lambda+4), \\
				0, &\text{otherwise;}
			\end{cases}\\
	\end{equation}
	
	\begin{equation}\allowdisplaybreaks
		\begin{split}
			S^r_{v_{n,\lambda}^2}(i) &= \begin{cases}
				0, &\text{if } i\le\lambda+2 \text{ or } i=n,\\
				-T(n-7-2\lambda)(\lambda+3), &\text{if } i=\lambda+3, \\
				(n-7-2\lambda)(\lambda+3), &\text{if } \lambda+4 \le i\le n-(\lambda+3), \\
				(n-7-2\lambda)(n-i),& \text{if } n-(\lambda+2)\le i \le n-1;
			\end{cases}\\
			S^c_{v_{n,\lambda}^2}(j) &= \begin{cases}
				-T(n-7-2\lambda)+(j-1)(n-7+2\lambda), & \text{if } j\le\lambda+3, \\
				(\lambda+3)(n-7-2\lambda), & \text{if } \lambda+4 \le j \le n-(\lambda+3),\\
				0, &\text{ otherwise};
			\end{cases}\\
			S^d_{v_{n,\lambda}^2} (i-j+1) &= \begin{cases}
				T(n-7-2\lambda)+(i-j)(n-7-2\lambda), & \text{if } 0\le i-j \le\lambda+2,\\
				0, &\text{otherwise.}
			\end{cases}\\
		\end{split}
	\end{equation}
	
	Note that, since $v_{n,\lambda}^3=\left(v_{n,\lambda}^3\right)^-=\left(v_{n,\lambda}^3\right)^+$, $S^r_{v_{n,\lambda}^3} (i)= S^c_{v_{n,\lambda}^3} (n-i+1) =S^d_{v_{n,\lambda}^3} (n-i)$, and then
	
	\begin{equation}\allowdisplaybreaks
		\begin{split}
			S^r_{v_{n,\lambda}^3}(i) &= \begin{cases}
				0, &\text{if } i\le\lambda+4 \text{ or } i=n,\\
				-2(\lambda+3)(i-(\lambda+4)), &\text{if } \lambda+5 \le i \le n-(\lambda+3),\\
				-2(n-i)(n-7-2\lambda
				), & \text{if } n-(\lambda+2) \le i \le n-1;
			\end{cases}\\
			S^c_{v_{n,\lambda}^3}(j) &= \begin{cases}
				0, &\text{if } j=1 \text{ or } j\ge n-(\lambda+3),\\
				-2(n-7-2\lambda)(j-1), &\text{if } 2 \le j \le \lambda+3, \\
				-2(\lambda+3)(n-j-(\lambda+3)), & \text{if } \lambda+4 \le j \le n-(\lambda+4);
			\end{cases}\\
			S^d_{v_{n,\lambda}^3}(i-j+1) &= \begin{cases}
				0, &\text{if } i-j=0 \text{ or } i-j\ge n-(\lambda+3),\\
				-2(i-j)(n-7-2\lambda), &\text{if } 1 \le i-j \le \lambda+2,\\
				-2(\lambda+3)(n-(i-j)-(\lambda+4)), & \text{if } \lambda+3 \le i-j \le n-(\lambda+4).
			\end{cases}
		\end{split}
	\end{equation}

	Now we are in position to define the sum-vectors of $v_{n,\lambda}$.
	
	\begin{equation}\allowdisplaybreaks \label{eq-totalsumV}
		\begin{split}
			S^r_{v_{n,\lambda}}(i) &= \begin{cases}
				0, &\text{if } i\le\lambda+2 \text{ or } n-(\lambda+2)\le i,\\
				-2T(n-7-2\lambda)(\lambda+3), &\text{if } i=\lambda+3, \\
				2(\lambda+3)(n-3-\lambda-i), &\text{if } \lambda+4 \le i \le n-(\lambda+3);
			\end{cases}\\
			S^c_{v_{n,\lambda}}(j) &= \begin{cases}
				0, &  \text{if } j\le \lambda+3 \text{ or } j\ge n-(\lambda+2),\\
				(\lambda+3)(2j-n-1), & \text{if } \lambda+4\le j\le n-(\lambda+3);\\
			\end{cases} \\
			S^d_{v_{n,\lambda}}(i-j+1) &= \begin{cases}
				0, & \text{if } i-j\le \lambda+2 \text{ or } i-j\ge n-(\lambda+3),\\
				(\lambda+3)(2(i-j)+1-n), & \text{if } \lambda+3 \le i-j \le n-(\lambda+4).
			\end{cases}
		\end{split}
	\end{equation}


	Consider the following Lemma.
	
	\begin{lemma}\label{lemma-v}
		For $n\ge4$ and $\lambda\in\left[-2,\lfloor\frac{n-7}{2}\rfloor\right]$, $v_{n,\lambda}$ in an eigenvector of $\mathcal{T}(n)$ associated with $\lambda$.
	\end{lemma}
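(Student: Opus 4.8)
The plan is to verify the eigenvalue equation $A v_{n,\lambda}=\lambda v_{n,\lambda}$ entrywise, in exactly the style of the proof of Lemma~\ref{lemma-u}. As in \eqref{eq-proof-vetoru}, for any $w\in\mathbb{R}^{T(n)}$ and any vertex $(i,j)$ the neighbours of $(i,j)$ are the remaining vertices of its row, its column and its diagonal, and $(i,j)$ itself lies on each of these three lines; hence
\[
Aw(i,j)=S^r_w(i)+S^c_w(j)+S^d_w(i-j+1)-3\,w(i,j),
\]
where the term $-3\,w(i,j)$ removes the three self-counts. Applying this to $w=v_{n,\lambda}$, the claim $A v_{n,\lambda}=\lambda v_{n,\lambda}$ is equivalent to the scalar identity
\[
S^r_{v_{n,\lambda}}(i)+S^c_{v_{n,\lambda}}(j)+S^d_{v_{n,\lambda}}(i-j+1)=(\lambda+3)\,v_{n,\lambda}(i,j),\qquad (i,j)\in\mathbb{T}_n .
\]

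The three sum-vectors are additive in $w$, so the formulas in \eqref{eq-totalsumV} (themselves obtained by adding the sum-vectors of $v_{n,\lambda}^1$, $v_{n,\lambda}^2$ and $v_{n,\lambda}^3$) already supply the left-hand side, while \eqref{eq-def-vetorV} supplies the right-hand side; note that the individual pieces $v_{n,\lambda}^1,v_{n,\lambda}^2,v_{n,\lambda}^3$ are \emph{not} eigenvectors, so only the combined identity can be checked. It therefore remains to verify the displayed equation region by region. I would partition $\mathbb{T}_n$ using the breakpoints of \eqref{eq-totalsumV} — the row ranges $i\le\lambda+2$, $i=\lambda+3$, $\lambda+4\le i\le n-\lambda-3$, $i\ge n-\lambda-2$, together with the analogous ranges for $j$ and for $i-j$ — and refine this by the layers of the hexagon carrying $v_{n,\lambda}^3$, on which $v_{n,\lambda}(i,j)$ is constant. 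On the complement of the support all four terms vanish and nothing is to be proved.

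The bulk of the work is the finite case check over this common refinement. On each cell the left-hand side is a sum of at most three affine functions of $i$ and $j$ while the right-hand side is a constant $(\lambda+3)(-2k)$, so the identity collapses to an elementary algebraic check. For instance, on the deep interior region $\lambda+4\le i,j\le n-\lambda-3$ with $\lambda+3\le i-j\le n-\lambda-4$ (nonempty exactly when $\lambda\le\frac{n-10}{3}$, which is also precisely when the innermost layer value is $-2(\lambda+3)$), adding the three middle-range expressions of \eqref{eq-totalsumV} yields $-2(\lambda+3)^2=(\lambda+3)\bigl(-2(\lambda+3)\bigr)$, as required. On the special row $i=\lambda+3$ one has $v_{n,\lambda}=-2T(n-7-2\lambda)$ with $S^c_{v_{n,\lambda}}=S^d_{v_{n,\lambda}}=0$, so the identity reduces to $S^r_{v_{n,\lambda}}(\lambda+3)=(\lambda+3)\bigl(-2T(n-7-2\lambda)\bigr)$, and the two boundary strips carrying $v_{n,\lambda}^1$ and $v_{n,\lambda}^2$ are handled the same way. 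The rotational invariance $v_{n,\lambda}^3=(v_{n,\lambda}^3)^{\pm}$ and the resulting relation $S^r_{v_{n,\lambda}^3}(i)=S^c_{v_{n,\lambda}^3}(n-i+1)=S^d_{v_{n,\lambda}^3}(n-i)$, recorded before \eqref{eq-totalsumV}, allow the three sides of each hexagon layer to be treated from a single computation.

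The main obstacle is the bookkeeping rather than any isolated hard step: I must ensure the region decomposition is exhaustive and that each boundary line (the rows $i=\lambda+3$, $i=n-\lambda-2$, the column $j=\lambda+3$, the diagonal $i-j=\lambda+2$, and the six corners of the hexagon) is assigned to exactly one case, since it is precisely where the piecewise formulas meet that sign errors arise. Two degenerate extremes also need separate attention: when $\lambda=-2$ the hexagon collapses (one side-width drops to $1$, so $v_{n,\lambda}^3$ is a triangle), and when $n$ is odd and $\lambda=\frac{n-7}{2}$ one has $n-7-2\lambda=0$, whence $v_{n,\lambda}^1=v_{n,\lambda}^2=v_{n,\lambda}^3=0$ and $v_{n,\lambda}$ degenerates to the zero vector. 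This last value must be excluded from the admissible range (cf.\ the remark following \eqref{eq-def-vetorV}), the corresponding eigenvalue being realised instead by the companion vectors $x_{n,\lambda},y_{n,\lambda}$ of the next subsection.
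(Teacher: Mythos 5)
Your proposal is correct and takes essentially the same route as the paper's proof: both reduce $Av_{n,\lambda}=\lambda v_{n,\lambda}$ to the scalar identity $S^r_{v_{n,\lambda}}(i)+S^c_{v_{n,\lambda}}(j)+S^d_{v_{n,\lambda}}(i-j+1)=(\lambda+3)\,v_{n,\lambda}(i,j)$ and then verify it region by region over the cells cut out by the breakpoints of \eqref{eq-totalsumV} together with \eqref{eq-def-vetorV} (the paper's conditions (C1)--(C11) are exactly the nonempty cells of your product decomposition). Your sample verifications (the deep interior and the row $i=\lambda+3$) are correct, and your treatment of the degenerate cases $\lambda=-2$ and $\lambda=\tfrac{n-7}{2}$ for odd $n$ matches the paper's remarks following \eqref{eq-def-vetorV}.
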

	\begin{proof}
		Let $A$ be the adjacency matrix of $\mathcal{T}(n)$. Consider the following equation
		
		\begin{equation}\label{eq-proof-vetorv}
			\begin{split}
				Av_{n,\lambda}(i,j) &= \sum_{(p,q)\sim(i,j)}v_{n,\lambda}(p,q)\\
				&= \underbrace{\sum_{q\in[i]} v_{n,\lambda}(i,q)}_\text{(A)} + \underbrace{\sum_{p\in[j,n]} v_{n,\lambda}(p,j)}_{(B)} + \underbrace{\sum_{p-q=i-j} v_{n,\lambda}(p,q)}_\text{(C)} - 3v_{n,\lambda}(i,j)
			\end{split}
		\end{equation}
		
Taking into account \eqref{eq-totalsumV} and \eqref{eq-def-vetorV}, it follows that (A)$+$(B)$+$(C) and $(\lambda+3)(u_{n,\lambda}^1(p,q)+u_{n,\lambda}^2(p,q)+u_{n,\lambda}^3(p,q))=(\lambda+3) u_{n,\lambda}(p,q)$ are equal for each of the following conditions.

		\begin{itemize}
			\item[(C1)] $i\le\lambda+2$ and $j\le\lambda+3$ and $i-j\le\lambda+2$;
			\item[(C2)] $i=\lambda+3$ and $j\le\lambda+3$ and $i-j\le\lambda+2$;
			\item[(C3)] $\lambda+4\le i \le n-(\lambda+3)$ and $j\le\lambda+3$ and $i-j\le\lambda+2$;
			\item[(C4)] $\lambda+4\le i \le n-(\lambda+3)$ and $j\le\lambda+3$ and $\lambda+3\le i-j \le n-(\lambda+4)$;
			\item[(C5)] $\lambda+4\le i \le n-(\lambda+3)$ and $\lambda+4\le j \le n-(\lambda+3)$ and $i-j\le\lambda+2$;
			\item[(C6)] $\lambda+4\le i \le n-(\lambda+3)$ and $\lambda+4\le j \le n-(\lambda+3)$ and $\lambda+3\le i-j \le n-(\lambda+4)$;
			\item[(C7)] $n-(\lambda+2)\le i$ and $j\le\lambda+3$ and $\lambda+3\le i-j \le n-(\lambda+4)$;
			\item[(C8)] $n-(\lambda+2)\le i$ and $j\le\lambda+3$ and $n-(\lambda+3)\le i-j$;
			\item[(C9)] $n-(\lambda+2)\le i$ and $\lambda+4\le j \le n-(\lambda+3)$ and $i-j\le\lambda+2$;
			\item[(C10)] $n-(\lambda+2)\le i$ and $\lambda+4\le j \le n-(\lambda+3)$ and $\lambda+3\le i-j \le n-(\lambda+4)$;
			\item[(C11)] $n-(\lambda+2)\le i$ and $n-(\lambda+2)\le j$ and $i-j\le\lambda+2$.			
		\end{itemize}
Therefore, the proof is done.

\end{proof}

	\begin{lemma}\label{lemma-uuv}
		$u_{n,\lambda}, u_{n,\lambda}^-$ and $v_{n,\lambda}$ are linearly independent.
	\end{lemma}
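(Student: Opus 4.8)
The plan is to assume a vanishing linear combination
\[
\alpha\, u_{n,\lambda} + \beta\, u_{n,\lambda}^- + \gamma\, v_{n,\lambda} = \mathbf{0}
\]
and deduce $\alpha=\beta=\gamma=0$. By Lemmas~\ref{lemma-u} and~\ref{lemma-v}, together with the corollary stating that the two rotations of an eigenvector are again eigenvectors for the same eigenvalue, all three vectors lie in $\mathcal{E}_{\mathcal{T}(n)}(\lambda)$, so this really is a question of linear independence inside a single eigenspace. The key device is the column sum-vector operator $S^c$, which is linear. Applying it to the displayed identity and using that the column sums of $u_{n,\lambda}$ all vanish, i.e. $S^c_{u_{n,\lambda}}\equiv 0$ by \eqref{eq-totalsumU}, eliminates the $\alpha$-term immediately and leaves the vector equation
\[
\beta\, S^c_{u_{n,\lambda}^-} + \gamma\, S^c_{v_{n,\lambda}} = \mathbf{0}.
\]

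The next step is to express $S^c_{u_{n,\lambda}^-}$ in terms of $u_{n,\lambda}$ itself. Since $u_{n,\lambda}^-$ is the negative rotation of $u_{n,\lambda}$ and the rotation of Definition~\ref{Def-v+v-} is an order-$3$ symmetry of $\mathbb{T}_n$ cyclically permuting the three line directions, the column sum-vector of $u_{n,\lambda}^-$ is simply a row sum-vector of $u_{n,\lambda}$ after reindexing. Concretely one checks, directly from the rotation formula (exactly as in the rotation identity already recorded for $v_{n,\lambda}^3$), that
\[
S^c_{u_{n,\lambda}^-}(\ell) = S^r_{u_{n,\lambda}}(n-\ell+1), \qquad \ell\in[n].
\]
Substituting this turns the previous vector equation into the family of scalar equations
\[
\beta\, S^r_{u_{n,\lambda}}(n-\ell+1) + \gamma\, S^c_{v_{n,\lambda}}(\ell) = 0, \qquad \ell\in[n].
\]

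Finally I would evaluate this on the ``middle'' band $\lambda+4\le \ell\le n-\lambda-3$. The involution $\ell\mapsto n-\ell+1$ preserves this band, so \eqref{eq-totalsumU} gives $S^r_{u_{n,\lambda}}(n-\ell+1)=-(\lambda+3)$ while \eqref{eq-totalsumV} gives $S^c_{v_{n,\lambda}}(\ell)=(\lambda+3)(2\ell-n-1)$. Because $\lambda\ge -2$ forces $\lambda+3\ge 1$, this factor cancels and we obtain
\[
-\beta + \gamma\,(2\ell-n-1) = 0
\]
for every $\ell$ in the band. This is an affine function of $\ell$ vanishing identically on the band; as soon as the band contains at least two integers, both its slope $2\gamma$ and its constant term must vanish, forcing $\gamma=0$ and then $\beta=0$. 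The original identity collapses to $\alpha\, u_{n,\lambda}=\mathbf{0}$ with $u_{n,\lambda}\neq\mathbf{0}$, whence $\alpha=0$.

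The main obstacle I anticipate is pinning down the reindexing in the rotation identity $S^c_{u_{n,\lambda}^-}(\ell)=S^r_{u_{n,\lambda}}(n-\ell+1)$ precisely, since an off-by-one or a wrong direction there would misalign the bands in \eqref{eq-totalsumU}–\eqref{eq-totalsumV}; this should be handled by a direct one-line substitution into the negative-rotation formula rather than by quoting the invariant case. The only other point to verify is that the band $[\lambda+4,\,n-\lambda-3]$ contains at least two integers, i.e. that $n-2\lambda-6\ge 2$; this holds throughout the range in which $v_{n,\lambda}$ is defined, the single excluded case ($n$ odd with $\lambda=\tfrac{n-7}{2}$) being exactly the one where $v_{n,\lambda}$ is undefined and hence outside the statement.
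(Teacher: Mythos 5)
Your proof is correct, but it takes a genuinely different route from the paper's. The paper argues entry by entry: on the diagonal $i-j=n-\lambda-3$ one has $u_{n,\lambda}\neq 0$ while $u_{n,\lambda}^-$ and $v_{n,\lambda}$ both vanish there, forcing $\alpha=0$; then at the central entry $\left(\lfloor\tfrac{n}{2}\rfloor,\lfloor\tfrac{n}{2}\rfloor\right)$ one has $u_{n,\lambda}^-=0$ and $v_{n,\lambda}\neq 0$, forcing $\gamma=0$; and finally $\beta=0$ because $u_{n,\lambda}^-\neq\mathbf{0}$. You instead apply the linear operator $S^c$ to the vanishing combination, kill the $\alpha$-term via $S^c_{u_{n,\lambda}}\equiv 0$ from \eqref{eq-totalsumU}, convert $S^c_{u_{n,\lambda}^-}$ into $S^r_{u_{n,\lambda}}$ by the rotation reindexing, and then read off $\beta=\gamma=0$ from the fact that an affine function of $\ell$ vanishing on the band $[\lambda+4,\,n-\lambda-3]$ must be identically zero once the band contains two integers. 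The trade-off: the paper's argument is shorter but rests on support claims at particular locations that it asserts without computation; yours reuses only the aggregate sum-vector data already recorded in \eqref{eq-totalsumU} and \eqref{eq-totalsumV}, and it makes fully explicit the degenerate case ($n$ odd, $\lambda=\tfrac{n-7}{2}$), which the paper handles only implicitly through $v_{n,\lambda}$ being undefined. One caution, which you yourself anticipated: your identity $S^c_{u_{n,\lambda}^-}(\ell)=S^r_{u_{n,\lambda}}(n-\ell+1)$ is correct for the rotation as depicted in Figure~\ref{fig-vector-rotation} and as required by the order-$3$ property, but Definition~\ref{Def-v+v-} as printed contains a typo --- the formula $v^-_{(i,j)}=v_{(n-i+1,i-j+1)}$ is not even well defined at $(i,j)=(n,1)$, and the intended formula is $v^-_{(i,j)}=v_{(n-j+1,i-j+1)}$ --- so the ``one-line substitution'' you propose must be carried out against the corrected formula, under which column $\ell$ of $u_{n,\lambda}^-$ is exactly row $n-\ell+1$ of $u_{n,\lambda}$ and your band computation goes through verbatim.
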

	\begin{proof}
		Let $\alpha,\beta,\gamma\in\mathbb{R}$ and $\mathbf{0}$ the null-vector with $T(n)$ components such that
		
		\begin{equation}\label{eq-abcTn}
			\alpha u_{n,\lambda} + \beta u_{n,\lambda}^- + \gamma v_{n,\lambda} = \mathbf{0}.
		\end{equation}
		
		Considering the Equation \eqref{eq-abcTn} and since, for $i-j = n-\lambda-3$, $u_{n,\lambda}(i,j)\not=0$ and $u_{n,\lambda}^-(i,j) = v_{n,\lambda}(i,j) = 0$, then $\alpha=0$. On the other hand, since $u_{n,\lambda}^-(\lfloor\frac{n}{2}\rfloor,\lfloor\frac{n}{2}\rfloor) = 0$ and $v_{n,\lambda}(\lfloor\frac{n}{2}\rfloor,\lfloor\frac{n}{2}\rfloor) \not= 0$, then $\gamma=0$. Finally $\beta$ has to be zero and so these vectors are linearly independent.
	\end{proof}
	
	\begin{remark}
		Note that the vectors $u_{n,\lambda}$, $u_{n,\lambda}^-$ and $u_{n,\lambda}^+$ are not linearly independent, since the sum of them is the null vector. However, if we consider just two of them, like in the previous Lemma, they are.
	\end{remark}
	
	\bigskip
	
	In this section, we proved that $\lambda\in\{-2,\ldots,\lfloor\frac{n-7}{2}\rfloor\}$ are eigenvalues of $\mathcal{T}(n)$. Three linearly independent eigenvectors associated with $\lambda$ were introduced, $u_{n,\lambda}, u_{n,\lambda}^-$ and $v_{n,\lambda}$, and so its multiplicity is at least $3$, except for $\lambda=\frac{n-7}{2}$, when $n$ is odd, whose multiplicity is at least $2$.
	
	\subsection{Second sequence of integer eigenvalues}\label{SecondSequence}
	
	For $n\ge4$ and $\lambda\in\left[\lceil\frac{n-4}{2}\rceil,n-3\right] \cap \mathbb{Z}$, consider the vectors $x_{n,\lambda}, x_{n,\lambda}^1, x_{n,\lambda}^2$ such that
	\begin{equation}\label{eq-def-vetorX}
		\begin{split}
			x_{n,\lambda}&=x_{n,\lambda}^1 + x_{n,\lambda}^2
		\end{split}
	\end{equation}
	with
	\begin{equation}
		\begin{split}
			\left[x_{n,\lambda}^1\right]_{(i,j)} &= \begin{cases}
				2\lambda-n+6, &\text{if } i-j=1-(n-\lambda-2)\\
				1, &\text{if } i-j<1-(n-\lambda-2) \text{ and } j\le n-\lambda+1\\
				0, &\text{otherwise;}
			\end{cases}\\
			\left[x_{n,\lambda}^2\right]_{(i,j)} &= \begin{cases}
				-(2\lambda-n+6), &\text{if } j=n-\lambda-2\\
				-1, &\text{if } j<n-\lambda-2 \text{ and } i-j\le \lambda+2\\
				0, &\text{otherwise;}
			\end{cases}\\
		\end{split}
	\end{equation}
	
	\begin{figure}[h!]
		\centering
		\includegraphics[width=13cm]{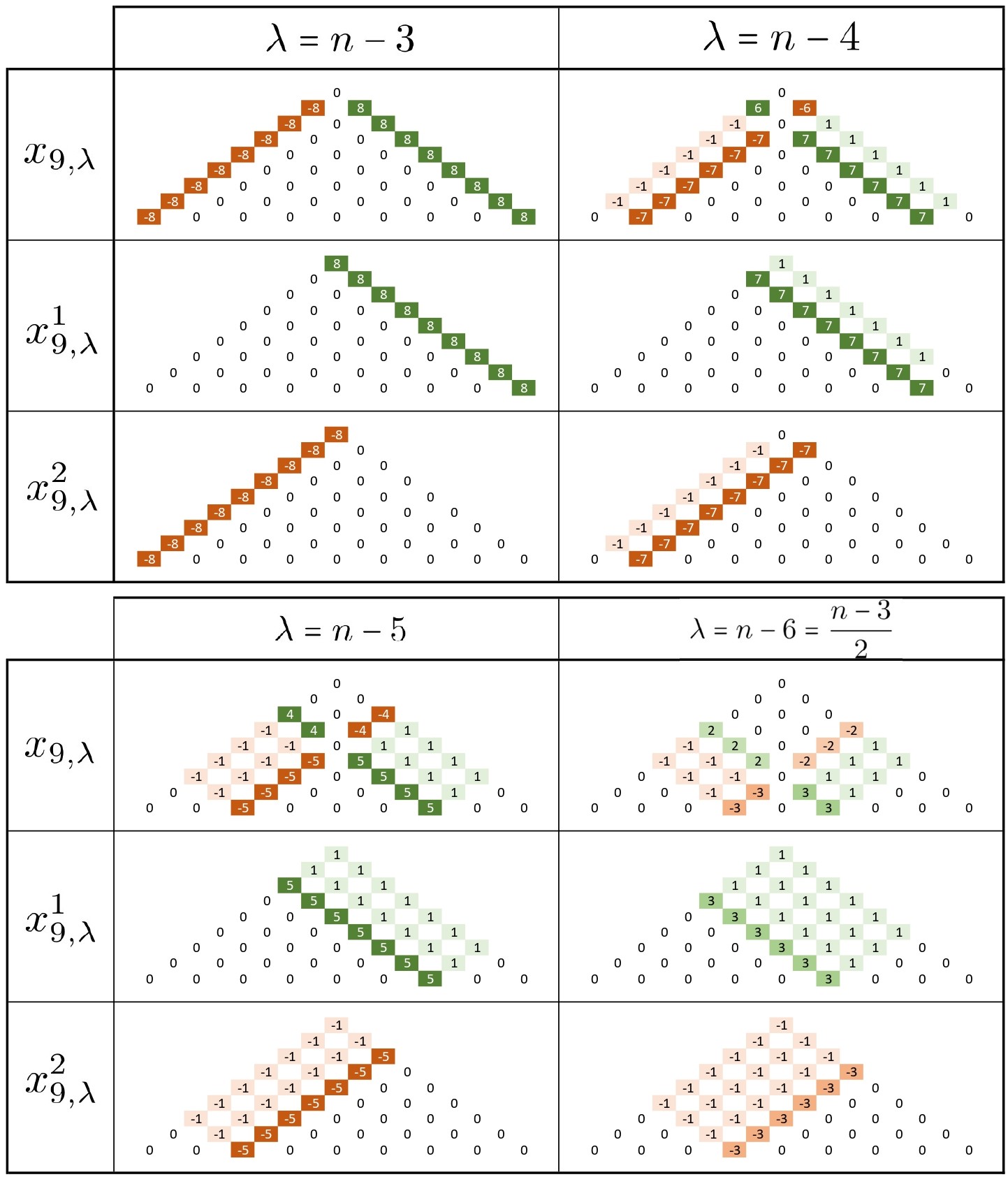}
		\caption{Vectors $x_{n,\lambda}, x_{n,\lambda}^1$ and $x_{n,\lambda}^2$ for $n=9$ and $\lambda=n-3,n-4,n-5,n-6$.}\label{fig-vet-xx1x2}
	\end{figure}
	
	A different and simple way to define these vectors is by the sum of $RCD$-vectors, and so
	\begin{equation}
		\begin{split}
			x_{n,\lambda}^1 = &\phantom{-} (D_{n,n-\lambda-4} + \ldots + D_{n,0})\\
			&- (C_{n,n} + \ldots + C_{n,\lambda+3})\\
			&+ (2\lambda-n+6) D_{n,n-\lambda-3} \text{ and}\\
			x_{n,\lambda}^2 = &- (C_{n,n-\lambda-3} + \ldots + C_{n,1})\\
			&+ (D_{n,n-1} + \ldots + D_{n,\lambda+3
			})\\
			&- (2\lambda-n+6) C_{n,n-\lambda-2}.
		\end{split}
	\end{equation}
	
	An example of these vectors is presented in Figure \ref{fig-vet-xx1x2}, for $n=10$ and some values of $\lambda$. Other examples of these vectors are presented in \cite[Appendix B]{Costa2024}.
	
	\medskip

	Note that the sum-vectors of $x_{n,\lambda}$ are the following.
	
	{\small \begin{equation}\label{eq-totalsumX}\allowdisplaybreaks
		\begin{split}
			S^r_{x_{n,\lambda}}(i) &= 0\\
			S^c_{x_{n,\lambda}}(j) &= \begin{cases}
				0, & 
                \text{if } j\le n-\lambda-3 \text{ or } j\ge\lambda+3, \\
				-(2\lambda-n+5) \times \\
                \times ((2\lambda-n-6)+(n-\lambda-3)), &\text{if } j=n-\lambda-2,\\
				(2\lambda-n+6)+(n-\lambda-3), & 
                \text{if } n-\lambda-1\le j \le \lambda+2;\\
			\end{cases}\\
			S^d_{x_{n,\lambda}}(i-j+1) &= \begin{cases}
				0, & 
                \text{if } i-j\le n-\lambda-5 \text{ or } i-j\ge\lambda+3, \\
				(2\lambda-n+5) \times & \\
                \times ((2\lambda-n-6)+(n-\lambda-3)), &\text{if } i-j=n-\lambda-4,\\
				-(2\lambda-n+6)-(n-\lambda-3), & 
                \text{if } n-\lambda-3\le i-j \le<\lambda+2.\\
			\end{cases}\\
		\end{split}
	\end{equation}}
	
	Consider the following Lemma.
	
	\begin{lemma}\label{lemma-x}
		For $n\ge4$ and $\lambda\in\left[\lceil\frac{n-4}{2}\rceil,n-3\right] \cap \mathbb{Z}$, $x_{n,\lambda}$ is an eigenvector of $\mathcal{T}(n)$ associated with $\lambda$.
	\end{lemma}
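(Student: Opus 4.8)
The plan is to verify directly that $A x_{n,\lambda}=\lambda x_{n,\lambda}$, following the scheme of Lemma~\ref{lemma-u}. Write $A$ for the adjacency matrix of $\mathcal{T}(n)$ and let $J_R,J_C,J_D$ denote the linear maps sending a vector $x$ to the vectors with entries $S^r_x(i)$, $S^c_x(j)$, $S^d_x(i-j+1)$ at position $(i,j)$. Since the neighbours of $(i,j)$ are exactly the other vertices sharing its (unique) row, column or diagonal, and since any two of these three cliques meet only in $(i,j)$, one has the operator identity $A=J_R+J_C+J_D-3I$, the $-3I$ correcting the triple count of $(i,j)$ itself. Hence $A x_{n,\lambda}=\lambda x_{n,\lambda}$ is equivalent to the pointwise scalar identity
\[
S^r_{x_{n,\lambda}}(i)+S^c_{x_{n,\lambda}}(j)+S^d_{x_{n,\lambda}}(i-j+1)=(\lambda+3)\,x_{n,\lambda}(i,j),
\]
and because $S^r_{x_{n,\lambda}}\equiv 0$ by \eqref{eq-totalsumX}, it suffices to check $S^c_{x_{n,\lambda}}(j)+S^d_{x_{n,\lambda}}(i-j+1)=(\lambda+3)\,x_{n,\lambda}(i,j)$ at every vertex.

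Before the case analysis I would treat the $RCD$-decomposition of $x_{n,\lambda}=x_{n,\lambda}^1+x_{n,\lambda}^2$ as the working definition, since it is the least error-prone form, and re-derive the sum-vectors in \eqref{eq-totalsumX} from it by linearity using the elementary column- and diagonal-sums of the single vectors $C_{n,c}$ and $D_{n,d}$; this simultaneously repairs the evident typographical slips in the displayed branches (for instance the ``$\le<$'' in the last line of $S^d_{x_{n,\lambda}}$). With the sum-vectors confirmed, both sides of the reduced identity are piecewise-affine in $(i,j)$, with breakpoints only at the columns $j\in\{n-\lambda-3,\,n-\lambda-2,\,n-\lambda-1,\,\lambda+2,\,\lambda+3\}$ and at the diagonals $i-j\in\{n-\lambda-5,\,n-\lambda-4,\,n-\lambda-3,\,\lambda+2,\,\lambda+3\}$ arising from \eqref{eq-def-vetorX} and \eqref{eq-totalsumX}. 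I would therefore partition $\mathbb{T}_n$ into the finitely many regions cut out by these thresholds -- the exact analogue of the list (C1)--(C11) in Lemma~\ref{lemma-v} -- and on each region substitute the relevant affine branches of $x_{n,\lambda}(i,j)$, $S^c_{x_{n,\lambda}}$ and $S^d_{x_{n,\lambda}}$, so that the identity collapses to a routine polynomial equality in $i,j,\lambda,n$.

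The main obstacle is organisational rather than conceptual: one must guarantee that the chosen regions genuinely exhaust $\mathbb{T}_n$ without overlap, and that the two distinguished loci -- the column $j=n-\lambda-2$ and the diagonal $i-j=n-\lambda-4$, carrying the exceptional weight $\pm(2\lambda-n+6)$ -- are assigned to the correct branch, so that the identity holds on the boundaries as well as in the interiors. Here the hypothesis $\lambda\ge\lceil\tfrac{n-4}{2}\rceil$ plays the separating role that $\lambda\le\lfloor\tfrac{n-7}{2}\rfloor$ plays in the first sequence: it forces $n-\lambda-3\le\lambda+2$, keeping the thresholds in admissible order so the regions are non-degenerate and the hexagonal support of $x_{n,\lambda}$ sits correctly inside the board. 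Verifying that the exceptional-weight cases balance exactly is the one place where the $2\lambda-n+6$ terms of $x^1,x^2$ must cancel against the matching branches of $S^c$ and $S^d$, and this is the computation I would set down most carefully.

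One could instead attempt to realise $x_{n,\lambda}$ as a $\pm120^\circ$ rotation of a first-sequence eigenvector via Proposition~\ref{Prop-EigAutomGraph}, exploiting that a rotation carries each $R_{n,r},C_{n,c},D_{n,d}$ to a single $R/C/D$-vector; however, the index ranges of the two sequences are complementary and the exceptional weights land on different diagonals under such a rotation, so the direct verification above is both cleaner and entirely self-contained.
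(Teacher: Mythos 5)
Your proposal takes essentially the same route as the paper's own proof: the operator identity $A=J_R+J_C+J_D-3I$ is precisely the decomposition behind the paper's Equation~\eqref{eq-proof-vetorx}, the vanishing of the row sums is noted there in the same way, and the remaining pointwise identity $S^c_{x_{n,\lambda}}(j)+S^d_{x_{n,\lambda}}(i-j+1)=(\lambda+3)\,x_{n,\lambda}(i,j)$ is settled by exactly the case-by-case comparison of \eqref{eq-totalsumX} against \eqref{eq-def-vetorX} that the paper carries out (deferring the table of cases to \cite[Table 6.4]{Costa2024}). Your write-up is correct and, if anything, somewhat more explicit about how the regions of the case analysis are organised and where the hypothesis $\lambda\ge\lceil\tfrac{n-4}{2}\rceil$ enters.
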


	\begin{proof}
		Let $A$ be the adjacency matrix of $\mathcal{T}(n)$. Consider the following equation
		
		{\small \begin{equation}\label{eq-proof-vetorx}
			\begin{split}
				Ax_{n,\lambda}(i,j) &= \sum_{(p,q)\sim(i,j)}x_{n,\lambda}(p,q)\\
				&= \underbrace{\sum_{q\in[i]} x_{n,\lambda}(i,q)}_{=0} + \underbrace{\sum_{p\in[j,n]} x_{n,\lambda}(p,j)}_\text{(A)} + \underbrace{\sum_{p-q=i-j} x_{n,\lambda}(p,q)}_\text{(B)} - 3x_{n,\lambda}(i,j)
			\end{split}
		\end{equation}}
In \cite[Table 6.4]{Costa2024}, some parts of Equation \eqref{eq-proof-vetorx} are presented. The columns $(A)$+$(B)$ and $(\lambda+3)x_{n,\lambda}(i,j)$ are complemented with the values of \eqref{eq-totalsumX} and \eqref{eq-def-vetorX}, respectively. Since they are equal for each condition, the proof is done.
\end{proof}

Now consider the vectors $y_{n,\lambda},y_{n,\lambda}^1,y_{n,\lambda}^2,y_{n,\lambda}^{2,1},y_{n,\lambda}^{2,2},y_{n,\lambda}^{2,3},y_{n,\lambda}^{2,4},y_{n,\lambda}^3,y_{n,\lambda}^{3,k}\in\mathbb{R}^{T(n)}$ for some $n\in\mathbb{N}$ and for all $k\in[\min\{n-\lambda-2,-n+2\lambda+4\}]$ such that
\begin{equation}\label{eq-def-vetorY}
		\begin{split}
			y_{n,\lambda}&=y_{n,\lambda}^1 + y_{n,\lambda}^2 + y_{n,\lambda}^3,\\
			y_{n,\lambda}^2&=y_{n,\lambda}^{2,1}+y_{n,\lambda}^{2,2}+y_{n,\lambda}^{2,3}+y_{n,\lambda}^{2,4}\text{ and}\\
			y_{n,\lambda}^3&=\sum_{k=1}^{\min\{n-\lambda-2,-n+2\lambda+4\}} y_{n,\lambda}^{3,k}
		\end{split}
\end{equation}
with
\begin{equation}\label{eq-def-y1}
		\left[y_{n,\lambda}^1\right]_{(i,j)} = \begin{cases}
			0, &\text{if } j\le n-\lambda-3 \text{ or } j\ge\lambda+3, \\
			n-2\lambda-6, &\text{if } (-n+2\lambda+5\le j \le \lambda+2) \text{ or } \\
			&\big((n-\lambda-2\le j \le -n+2\lambda+4) \text{ and } \\
			& (i\ge\lambda+3\text{ or } i-j\le n-\lambda-3)\big), \\
			2(n-2\lambda-5), &\text{if } i\le \lambda+2 \text{ and } j\ge n-\lambda-2 \text{ and } i-j\ge n-\lambda-2;
		\end{cases}\\
\end{equation}
\begin{equation}
		\begin{split}\allowdisplaybreaks
			\left[y_{n,\lambda}^{2,1}\right]_{(i,j)} &=\begin{cases}
				2T(n-2\lambda-6), & \text{if } j=n-\lambda-2, \\
				0, &\text{otherwise;}
			\end{cases}\\
			\left[y_{n,\lambda}^{2,2}\right]_{(i,j)} &= \begin{cases}
				-n+2\lambda+4, & \text{if } i\ge n-\lambda-2, \\
				&j\le n-\lambda-3, \text{ and}\\
				& i-j \le n-\lambda-3, \\
				0, &\text{otherwise;}
			\end{cases}\\
			\left[y_{n,\lambda}^{2,3}\right]_{(i,j)} &= \begin{cases}
				-n+2\lambda+4, & \text{if } i\ge \lambda+3, \\
				&j\le n-\lambda-3, \text{ and}\\
				& i-j \le\lambda+2, \\
				0, &\text{otherwise;}
			\end{cases}\\
			\left[y_{n,\lambda}^{2,4}\right]_{(i,j)} &= \begin{cases}
				-n+2\lambda+4, & \text{if } i\ge \lambda+3, \\
				& n-\lambda-2 \le j\le \lambda+2, \text{ and} \\
				& i-j \le n-\lambda-3, \\
				0, &\text{otherwise;}
			\end{cases}\\\end{split}
\end{equation}
\begin{equation}
		\left[y_{n,\lambda}^{3,k}\right]_{(i,j)} =\begin{cases}
			                                        -2, & \text{if } n-\lambda-2+k\le i \le n-k, \\
			                                            & k \le j \le \lambda+2-k, \text{ and}\\
                                                        & k\le i-j \le n-k-1, \\
			                                         0, &\text{otherwise.}
		                                            \end{cases}
\end{equation}

An example of these vectors is presented in \cite[Fig. 6.12]{Costa2024}, for $n=10$ and some values of $\lambda$. Other examples of these vectors are presented in \cite[Appendix B]{Costa2024}.
	
	
\begin{remark}
The vector $y_{n,\lambda}^2$ is the sum of four vectors:
\begin{itemize}\setlength\itemsep{0.1em}
\item  $y_{n,\lambda}^{2,1}$ - a vector whose $n-\lambda-2^{\text{th}}$ column is $2T(n-2\lambda-6)$ and the remaining entries are zero;
\item $y_{n,\lambda}^{2,2}$ whose non null components are $-n+2\lambda+4$ and they form a triangular shape limited by conditions $i=n-\lambda-2$, $j=n-\lambda-3$ and $i-j=n-\lambda-3$;
\item $y_{n,\lambda}^{2,3}$ whose non null components are $-n+2\lambda+4$ and they form a triangular shape limited by conditions $i=\lambda+3$, $j=n-\lambda-3$ and $i-j=\lambda+2$;
\item $y_{n,\lambda}^{2,4}$ whose non null components are $-n+2\lambda+4$ and, when $2(n-\lambda-3)<\lambda+3$, they form a triangular shape limited by conditions $i=\lambda+3$, $j=\lambda+2$ and $i-j=n-\lambda-3$, when $2(n-\lambda-3)\ge\lambda+3$, they are limited by conditions $i=\lambda+3$, $j=n-\lambda-2$, $j=\lambda+2$ and $i-j=n-\lambda-3$.
\end{itemize}
Note that the $2(n-\lambda-3)^{\text{th}}$ and the $\lambda+3^{\text{th}}$ rows are the last row of $y_{n,\lambda}^{2,2}$ and first row of $y_{n,\lambda}^{2,3}$ with non-null components, respectively. Furthermore, when $2n-2\lambda-6<\lambda+2$, none of these four vectors have non-zero entries in common. When $2n-2\lambda-6=\lambda+2$ the vectors $y_{n,\lambda}^{2,1}$ and $y_{n,\lambda}^{2,4}$ have one non-null entry in common. And finally, when $2n-2\lambda-6\ge\lambda+3$, the vectors $y_{n,\lambda}^{2,1}$ and $y_{n,\lambda}^{2,4}$, and the vectors $y_{n,\lambda}^{2,2}$ and $y_{n,\lambda}^{2,3}$ have some non-null components in common.
\end{remark}
%
	
\begin{remark}
When the first column (and row) of $v_{n+1,n-\lambda-5}^3$ are removed, we obtained $y_{n,\lambda}^3$.
		
Note that, as in the vector $v_{n+1,n-\lambda-5}^3$, the non-nulls components of $y_{n,\lambda}^3$ form a hexagonal shape inside $\mathbb{T}(n)$, where the hexagon sides are alternately $\ell_1=n-\lambda-2$ and $\ell_2=-n+2\lambda+4$.
		
Also note that, when $n$ is even and $\lambda=\frac{n-4}{2}$, $\ell_2=0$. Thus the vector $y_{n,\lambda}^3$ is undefined and so is $y_{n,\lambda}$.
\end{remark}
	
In what follows, we will compute the sum-vectors of $y_{n,\lambda}^1$, $y_{n,\lambda}^2$ and $y_{n,\lambda}^3$ by row, column, and diagonal. Since these vectors have significant changes for different values of $n$ and $\lambda$, we will divide these sums into different cases when necessary.
	
	\medskip
	
	If $2n-2\lambda-5<\lambda+2$, the sum-vectors of $y_{n,\lambda}^1$ are the following.
	\begin{equation}\label{eq-SUMY-1}\allowdisplaybreaks
		\begin{split}
			S^r_{y_{n,\lambda}^1}(i) &= \begin{cases}
				0, & \text{if } i\le n-\lambda-3, \\
				(n-2\lambda-6)(i-n+3+\lambda), & \text{if } n-\lambda-2\le i\le 2n-2\lambda-5, \\
				(n-2\lambda-5)(-3n+3\lambda+2i+8)-(n-\lambda-2), & \text{if } 2n-2\lambda-4\le i\le \lambda+2, \\
				-2T(n-2\lambda-6), & \text{if } \lambda+3\le i ;\\
			\end{cases}\\
			S^c_{y_{n,\lambda}^1}(j) &= \begin{cases}
				0&\text{if } j\le n-\lambda-3 \text{ or } j\ge\lambda+3, \\
				2(n-2\lambda-5)(\lambda-j+3)-2(n-\lambda-2) &\text{if }n-\lambda-2\le j\le -n+2\lambda+4, \\
				(n-2\lambda-6)(n-j+1), &\text{if } -n+2\lambda+5\le j \le \lambda+2;
			\end{cases}\\
			S^d_{y_{n,\lambda}^1}(i-j+1) &= \begin{cases}
				-2T(n-2\lambda-6), &\text{if }i-j\le n-\lambda-3, \\
				(n-2\lambda-5)(-n+3\lambda-2(i-j)+8)-(n-\lambda-2), &\text{if }n-\lambda-2\le i-j \le -n+2\lambda+4, \\
				(n-2\lambda-6)(\lambda+3-(i-j)), &\text{if } -n+2\lambda+5\le i-j\le\lambda+2, \\
				0, &\text{if } i-j\ge\lambda+3.\\
			\end{cases}
		\end{split}
	\end{equation}
	
	Otherwise, if $2n-2\lambda-5\ge\lambda+2$, the sum-vectors of $y_{n,\lambda}^1$ are the following.
	\begin{equation}\label{eq-SUMY-2}\allowdisplaybreaks
		\begin{split}
			S^r_{y_{n,\lambda}^1}(i) &= \begin{cases}
				0, & \text{if } i\le n-\lambda-3, \\
				(n-2\lambda-6)(i-n+3+\lambda), & \text{if } n-\lambda-2\le i\le \lambda+2, \\
				-2T(n-2\lambda-6), & \text{if } \lambda+3\le i ;\\
			\end{cases}\\
			S^c_{y_{n,\lambda}^1}(j) &= \begin{cases}
				0&\text{if } j\le n-\lambda-3 \text{ or } j\ge\lambda+3, \\
				(n-2\lambda-6)(n-j+1), &\text{if } n-\lambda-2\le j \le \lambda+2;
			\end{cases}\\
			S^d_{y_{n,\lambda}^1}(i-j+1) &= \begin{cases}
				-2T(n-2\lambda-6), &\text{if }i-j\le n-\lambda-3, \\
				(n-2\lambda-6)(\lambda+3-(i-j)), &\text{if } n-\lambda-2\le i-j\le\lambda+2, \\
				0, &\text{if } i-j\ge\lambda+3.\\
			\end{cases}
		\end{split}
	\end{equation}
	
	If $2n-2\lambda-5<\lambda+3$, the sum-vectors of $y_{n,\lambda}^2$ are
	\begin{equation}\label{eq-SUMY-3}\allowdisplaybreaks
		\begin{split}
			S^r_{y_{n,\lambda}^2}(i) &= \begin{cases}
				0, & \text{if } i\le n-\lambda-3, \\
				2T(n-2\lambda-6)+(-n+2\lambda+4)(2n-2\lambda-5-i), &\text{if }  n-\lambda-2\le i \le 2n-2\lambda-5, \\
				2T(n-2\lambda-6), & \text{if } 2n-2\lambda-4\le i \le \lambda+2, \\
				2T(n-2\lambda-6)+2(-n+2\lambda+4)(n-i), &\text{if } \lambda+3 \le i;
			\end{cases}\\
			S^c_{y_{n,\lambda}^2}(j) &= \begin{cases}
				2j(-n+2\lambda+4), &\text{if } j \le n-\lambda-3, \\
				2(\lambda+3)T(n-2\lambda-6), &\text{if } j=n-\lambda-2, \\
				0, &\text{if } n-\lambda-1\le j \le -n+2\lambda+4, \\
				(-n+2\lambda+4)(j+n-2\lambda-5) &\text{if } -n+2\lambda+5\le j \le\lambda+2, \\
				0, &\text{if } j\ge\lambda+3;
			\end{cases}\\
			S^d_{y_{n,\lambda}^2}(i-j+1) &= \begin{cases}
				2T(n-2\lambda-6)+2(i-j)(-n+2\lambda+4), &\text{if } i-j\le n-\lambda-3, \\
				2T(n-2\lambda-6), &\text{if } n-\lambda-2\le i-j\le -n+2\lambda+4, \\
				2T(n-2\lambda-6)+(i-j+n-2\lambda-5)(-n+2\lambda+4), &\text{if } -n+2\lambda+5\le i-j \le\lambda+2, \\
				0, &\text{if }i-j\ge\lambda+3.
			\end{cases}
		\end{split}
	\end{equation}
	
	Otherwise, if $2n-2\lambda-5\ge\lambda+3$, they are
	
	\begin{equation}\label{eq-SUMY-4}\allowdisplaybreaks
		\begin{split}
			S^r_{y_{n,\lambda}^2}(i) &= \begin{cases}
				0, & \text{if } i\le n-\lambda-3, \\
				2T(n-2\lambda-6)+(-n+2\lambda+4)(2n-2\lambda-5-i), &\text{if } n-\lambda-2\le i \le \lambda+2, \\
				2T(n-2\lambda-6)+2(-n+2\lambda+4)(n-i), &\text{if } \lambda+3 \le i;
			\end{cases}\\
			S^c_{y_{n,\lambda}^2}(j) &= \begin{cases}
				2(-n+2\lambda+4)j, &\text{if } j \le n-\lambda-3, \\
				2(\lambda+3)T(n-2\lambda-6)+(-n+2\lambda+4)(2n-3\lambda-7), &\text{if } j=n-\lambda-2, \\
				(-n+2\lambda+4)(j+n-2\lambda-5), &\text{if } n-\lambda-1\le j\le \lambda+2, \\
				0, &\text{if } j\ge\lambda+3;		
			\end{cases}\\
			S^d_{y_{n,\lambda}^2}(i-j+1) &= \begin{cases}
				2T(n-2\lambda-6)+2(i-j)(-n+2\lambda+4), &\text{if } i-j\le n-\lambda-3, \\
				2T(n-2\lambda-6)+(i-j+n-2\lambda-5)(-n+2\lambda+4), &\text{if } n-\lambda-2\le i-j \le \lambda+2, \\
				0, &\text{if }i-j\ge\lambda+3.
			\end{cases}
		\end{split}
	\end{equation}
	
	Finally, the sum-vectors of $y_{n,\lambda}^3$ are
	
	\begin{equation}\label{eq-SUMY-5}\allowdisplaybreaks
		\begin{split}		
			S^r_{y_{n,\lambda}^3}(i) &= S^r_{v_{n+1,n-\lambda-5}^3}(i+1)=\begin{cases}
				0, &\text{if } i\le n-\lambda-3, \\
				-2(n-\lambda-2)(-n+\lambda+i+2), &\text{if }n-\lambda-2\le i \le \lambda+2, \\
				-2(n-i)(-n+4+2\lambda), &\text{if }\lambda+3\le i; \\
			\end{cases}\\
			S^c_{y_{n,\lambda}^3}(j) &= S^c_{v_{n+1,n-\lambda-5}^3}(j+1)=\begin{cases}
				-2j(-n+2\lambda+4), &\text{if } j\le n-\lambda-3, \\
				-2(n-\lambda-2)(\lambda-j+2), &\text{if } n-\lambda-2\le j\le\lambda+2, \\
				0, &\text{if } j\ge\lambda+3; \\
			\end{cases}\\
			S^d_{y_{n,\lambda}^3}(i-j) &= S^d_{v_{n+1,n-\lambda-5}^3}(i-j) =\begin{cases}
				-2(i-j)(-n+2\lambda+4), & \text{if } i-j\le n-\lambda-3, \\
				-2(n-\lambda-2)(\lambda-(i-j)+2), &\text{if } n-\lambda-2\le i-j\le \lambda+2, \\
				0, &\text{if } i-j\ge\lambda+3.
			\end{cases}\\
		\end{split}
	\end{equation}
	
Now we are in position to define the sum-vectors for $y_{n,\lambda}$. Although the partial sum-vector are different for each case, the sum-vectors of $y_{n,\lambda}$ are always
	
	\begin{equation}\label{eq-sumvector-y}
		\begin{split}
			S_{y_{n,\lambda}}^r(i) &= \begin{cases}
				0, &\text{if } i\le n-\lambda-3\\
				(\lambda+3)(n-2i), &\text{if } n-\lambda-2\le i\le \lambda+2\\
				0, &\text{if } \lambda+3\le i;
			\end{cases}\\
			S_{y_{n,\lambda}}^r(j) &= \begin{cases}
				0, &\text{if } j\le n-\lambda-3\\
				(\lambda+3)(n-2\lambda-5)(n-2\lambda-4), &\text{if } j=n-\lambda-2\\
				-2(\lambda+3)(\lambda+3-j), &\text{if } n-\lambda-1\le j\le \lambda+2\\
				0, &\text{if } \lambda+3\le j;
			\end{cases}\\
			S_{y_{n,\lambda}}^d(i-j+1) &= \begin{cases}
				0, &\text{if } i-j\le n-\lambda-3\\
				(\lambda+3)(2(i-j)-n), &\text{if } n-\lambda-2\le i-j\le \lambda+2\\
				0, &\text{if } \lambda+3\le i-j.
			\end{cases}				
		\end{split}
	\end{equation}

The proofs of the next lemmas can be consulted in \cite[Lemmas 6.3.17 and 6.3.18]{Costa2024}.

\begin{lemma}\label{lemma-y}
For $n\ge4$ and $\lambda\in\left[\lceil\frac{n-4}{2}\rceil,n-3\right] \cap \mathbb{Z}$, $y_{n,\lambda}$ in an eigenvector of $\mathcal{T}(n)$ associated with $\lambda$.
\end{lemma}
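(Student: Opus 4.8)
The plan is to follow verbatim the strategy of Lemmas~\ref{lemma-u}, \ref{lemma-v} and \ref{lemma-x}. Write $A$ for the adjacency matrix of $\mathcal{T}(n)$ and recall that the neighbours of a vertex $(i,j)$ are exactly the remaining vertices in its row, its column or its diagonal. Hence the value of $Ay_{n,\lambda}$ at $(i,j)$ splits as
\[
Ay_{n,\lambda}(i,j)=S^r_{y_{n,\lambda}}(i)+S^c_{y_{n,\lambda}}(j)+S^d_{y_{n,\lambda}}(i-j+1)-3\,y_{n,\lambda}(i,j),
\]
where the term $-3\,y_{n,\lambda}(i,j)$ compensates for the vertex $(i,j)$ being counted once in each of the three sum-vectors. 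Thus the statement is equivalent to the single pointwise identity
\[
S^r_{y_{n,\lambda}}(i)+S^c_{y_{n,\lambda}}(j)+S^d_{y_{n,\lambda}}(i-j+1)=(\lambda+3)\,y_{n,\lambda}(i,j),
\]
to be checked at every vertex of $\mathbb{T}_n$.

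The first step is to establish the clean formulas \eqref{eq-sumvector-y} for the sum-vectors of $y_{n,\lambda}$. The point is that, although the partial sum-vectors of $y_{n,\lambda}^1$ and $y_{n,\lambda}^2$ bifurcate according to whether $2n-2\lambda-5<\lambda+2$ or $2n-2\lambda-5\ge\lambda+2$ in \eqref{eq-SUMY-1}--\eqref{eq-SUMY-4}, their sum with the sum-vectors \eqref{eq-SUMY-5} of $y_{n,\lambda}^3$ collapses in both regimes to the uniform expressions \eqref{eq-sumvector-y}. So one verifies this collapse once in each regime; here the rotational relation that $y_{n,\lambda}^3$ inherits from $v_{n+1,n-\lambda-5}^3$, namely $S^r_{y_{n,\lambda}^3}(i)=S^r_{v_{n+1,n-\lambda-5}^3}(i+1)$ and its cyclic analogues, lets one reuse the hexagonal arithmetic already handled for Lemma~\ref{lemma-v} instead of recomputing it.

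With \eqref{eq-sumvector-y} available, the pointwise identity reduces to a finite region-by-region verification, exactly as in the eleven-case analysis of Lemma~\ref{lemma-v}. One partitions $\mathbb{T}_n$ using the thresholds $n-\lambda-3$, $n-\lambda-2$, $\lambda+2$ and $\lambda+3$ in each of the coordinates $i$, $j$ and $i-j$; on every resulting cell each sum-vector is a fixed linear function read off from \eqref{eq-sumvector-y}, and $y_{n,\lambda}(i,j)$ is the sum of the constant contributions of $y_{n,\lambda}^1$, $y_{n,\lambda}^{2,1},\dots,y_{n,\lambda}^{2,4}$ and $y_{n,\lambda}^3$ prescribed by \eqref{eq-def-vetorY} and \eqref{eq-def-y1}. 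One then confirms the linear identity cell by cell.

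I expect the principal obstacle to be this last bookkeeping rather than any conceptual difficulty: since $y_{n,\lambda}$ is assembled from six pieces whose supports overlap differently depending on the sign of $2n-2\lambda-5-(\lambda+2)$ (as catalogued in the remark on the overlaps of $y_{n,\lambda}^{2,1},\dots,y_{n,\lambda}^{2,4}$), one must correctly evaluate $y_{n,\lambda}(i,j)$ on each cell and, crucially, determine which cells are non-empty in each regime. Keeping the case split of \eqref{eq-SUMY-1}--\eqref{eq-SUMY-4} aligned with the cell decomposition is the delicate part; once the two regimes are treated separately and consistently, every case reduces to a routine verification of a linear equality in $i$, $j$, $i-j$, $n$ and $\lambda$.
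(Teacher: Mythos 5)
Your proposal is correct and follows essentially the same route as the paper: the paper itself defers the detailed verification to the thesis (Lemma 6.3.17 of \cite{Costa2024}), but its method there, as in the proofs of Lemmas~\ref{lemma-u}, \ref{lemma-v} and \ref{lemma-x}, is exactly your reduction of $Ay_{n,\lambda}=\lambda y_{n,\lambda}$ to the pointwise identity $S^r_{y_{n,\lambda}}(i)+S^c_{y_{n,\lambda}}(j)+S^d_{y_{n,\lambda}}(i-j+1)=(\lambda+3)y_{n,\lambda}(i,j)$, established by collapsing the regime-dependent partial sum-vectors \eqref{eq-SUMY-1}--\eqref{eq-SUMY-5} into \eqref{eq-sumvector-y} and then checking region by region. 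The only point worth adding is the degenerate case already flagged in the paper's remark: for $n$ even and $\lambda=\frac{n-4}{2}$ the vector $y_{n,\lambda}$ is undefined, so the verification (and the lemma) applies only to the remaining values of $\lambda$ in the stated range.
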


\begin{lemma}\label{lemma-xxy}
For $n\ge4$ and $\lambda\in\left[\lceil\frac{n-4}{2}\rceil,n-3\right] \cap \mathbb{Z}$, $x_{n,\lambda}, x_{n,\lambda}^+$ and $y_{n,\lambda}$ are linearly independent.
\end{lemma}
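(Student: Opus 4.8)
The plan is to follow the same cascading elimination used in the proof of Lemma~\ref{lemma-uuv}: assuming a dependence relation $\alpha x_{n,\lambda}+\beta x_{n,\lambda}^{+}+\gamma y_{n,\lambda}=\mathbf 0$, I would peel off the coefficients one at a time by evaluating suitable linear functionals on both sides. Rather than hunting for individual witness coordinates (which shift with the parity of $n$ and the position of $\lambda$ in its range), I would apply the diagonal sum-vector operator $S^{d}$, whose values on $x_{n,\lambda}$ and $y_{n,\lambda}$ are already recorded in \eqref{eq-totalsumX} and \eqref{eq-sumvector-y}.

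The key observation is that $S^{d}_{x_{n,\lambda}^{+}}\equiv 0$. Indeed, from Definition~\ref{Def-v+v-} a direct computation gives $S^{d}_{x_{n,\lambda}^{+}}(d+1)=S^{r}_{x_{n,\lambda}}(n-d)$ for every diagonal index $d$ (the $120^{\circ}$ rotation carries rows to diagonals), and by \eqref{eq-totalsumX} every row sum of $x_{n,\lambda}$ vanishes. Hence applying $S^{d}$ to the relation kills the $\beta$-term and leaves $\alpha\,S^{d}_{x_{n,\lambda}}(d+1)+\gamma\,S^{d}_{y_{n,\lambda}}(d+1)=0$ for all $d$. On the range $n-\lambda-3\le d\le\lambda+2$ the quantity $S^{d}_{x_{n,\lambda}}(d+1)$ equals the nonzero constant $-(\lambda+3)$, whereas $S^{d}_{y_{n,\lambda}}(d+1)=(\lambda+3)(2d-n)$ vanishes at the left endpoint $d=n-\lambda-3$ and equals $(\lambda+3)(2\lambda+4-n)$ at $d=\lambda+2$.

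From this I would read off the coefficients in order. Evaluating at $d=n-\lambda-3$ gives $-\alpha(\lambda+3)=0$, hence $\alpha=0$ since $\lambda+3>0$. Evaluating at $d=\lambda+2$ then gives $\gamma(\lambda+3)(2\lambda+4-n)=0$; as $2\lambda+4-n>0$ for every admissible $\lambda$ (here one uses $\lambda\ge\lceil\frac{n-4}{2}\rceil$, the value $\lambda=\frac{n-4}{2}$ being excluded because $y_{n,\lambda}$ is then undefined), this forces $\gamma=0$. The relation collapses to $\beta x_{n,\lambda}^{+}=\mathbf 0$, and since $2\lambda-n+6>0$ the vector $x_{n,\lambda}$, hence its rotation, is nonzero, so $\beta=0$.

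The routine part is the arithmetic in \eqref{eq-totalsumX} and \eqref{eq-sumvector-y}, together with checking the nondegeneracy inequalities $\lambda+3>0$, $2\lambda-n+6>0$ and $2\lambda+4-n>0$ throughout the interval $\left[\lceil\frac{n-4}{2}\rceil,n-3\right]$ and confirming that the diagonal range $n-\lambda-3\le d\le\lambda+2$ contains the two evaluation points used. The only genuinely delicate step is the rotation identity $S^{d}_{x_{n,\lambda}^{+}}(d+1)=S^{r}_{x_{n,\lambda}}(n-d)$: establishing it requires tracking how the map $(i,j)\mapsto(n-i+j,\,n-i+1)$ of Definition~\ref{Def-v+v-} sends the vertices of a fixed diagonal of $x_{n,\lambda}^{+}$ bijectively onto the vertices of a single row of $x_{n,\lambda}$, which is where I would concentrate the verification.
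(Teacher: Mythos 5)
Your proof is correct, but it takes a genuinely different route from the paper's. The paper never proves Lemma~\ref{lemma-xxy} in the text — it defers to \cite[Lemma 6.3.18]{Costa2024} — and its in-paper model for statements of this kind, the proof of Lemma~\ref{lemma-uuv}, works by exhibiting witness coordinates: an entry where one vector is nonzero while the other two vanish, applied twice in succession. You instead hit the dependence relation with the linear functional $S^d$ and read everything off the sum-vector formulas \eqref{eq-totalsumX} and \eqref{eq-sumvector-y} already tabulated in the paper. The pivot of your argument, the rotation identity, is correct and is a two-line computation: for any $v$ and any diagonal index $d$,
\[
S^d_{v^+}(d+1)=\sum_{j=1}^{n-d} v^+_{(j+d,j)}=\sum_{j=1}^{n-d} v_{(n-d,\,n-d-j+1)}=S^r_v(n-d),
\]
so the vanishing of all row sums of $x_{n,\lambda}$ does kill the $\beta$-term. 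The evaluations at $d=n-\lambda-3$ and $d=\lambda+2$ then force $\alpha=0$ and $\gamma=0$ as you claim (one pedantic point: at $d=n-\lambda-3$ the value of $S^d_{y_{n,\lambda}}$ is zero by the \emph{first} case of \eqref{eq-sumvector-y}, not by extending the middle formula $(\lambda+3)(2d-n)$ to that point, but this is presentation, not substance), and your exclusion of $\lambda=\frac{n-4}{2}$ for even $n$ matches the paper's own caveat that $y_{n,\lambda}$ is undefined there. One suggestion: the last step can be closed entirely inside your framework — once $\alpha=\gamma=0$, if $\beta\neq 0$ then $x_{n,\lambda}^+=\mathbf{0}$, hence $x_{n,\lambda}=\mathbf{0}$, contradicting $S^d_{x_{n,\lambda}}(d+1)=-(\lambda+3)\neq 0$ on the nonempty range $[n-\lambda-3,\lambda+2]$ — which avoids appealing to the entrywise definition of $x_{n,\lambda}$ (whose case form in the paper contains typos, e.g.\ the condition $i-j=1-(n-\lambda-2)$). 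What your approach buys is uniformity: no hunting for coordinates that shift with the parity of $n$ and the position of $\lambda$, and full reuse of data the paper has already displayed; the cost is that it leans on the correctness of those tabulated sum-vectors, which the paper itself also takes for granted.
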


	\begin{remark}
		Note that the vectors $x_{n,\lambda}$, $x_{n,\lambda}^-$ and $x_{n,\lambda}^+$ are not linearly independent, since the sum of them is the null vector. However, if we consider just two of them, like in the previous Lemma, they are.
	\end{remark}
	
	\medskip
	
	Hence $n-3, \dots, \lceil\frac{n-4}{2}\rceil$ are eigenvalues of $\mathcal{T}(n)$. Three linearly independent eigenvectors associated with $\lambda$ were introduced, $x_{n,\lambda}, x_{n,\lambda}^+$ and $y_{n,\lambda}$, and so its multiplicity is at least $3$, except for $\frac{n-4}{2}$, when $n$ is even, whose multiplicity is at least $2$.

\begin{theorem} {\rm \cite[Th. 6.3.20]{Costa2024}} \label{IntegralGraphSpectra}
For $n \in \{1,2\}$, ${\cal T}(n) \cong K_n$. For $n=3$, $\sigma({\cal T}(n)) = \{4, 0^{[3]},-2^{[2]}\}$. Furthermore, for $n \ge 4$,
\begin{enumerate}
\item $\sigma({\cal T}(n))=\{2n-2, (n-3)^{[3]}, \dots, \frac{n-3}{2}^{[3]}, \frac{n-7}{2}^{[2]}, \frac{n-9}{2}^{[3]}, \dots, -2^{[3]}, -3^{[T(n-3)]}\}$,  when $n$ is odd;
\item $\sigma({\cal T}(n))=\{2n-2, (n-3)^{[3]}, \dots, \frac{n-2}{2}^{[3]}, \frac{n-4}{2}^{[2]}, \frac{n-8}{2}^{[3]}, \dots, -2^{[3]}, -3^{[T(n-3)]}\}$, when $n$ is even.
\end{enumerate}
\end{theorem}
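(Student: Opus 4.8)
The plan is to finish by a dimension count. All the substantive work — exhibiting eigenvectors and proving their linear independence — has already been done in the previous subsections, so the only remaining task is to show that the multiplicity lower bounds obtained there already exhaust all $T(n)$ dimensions, which forces those bounds to be exact and forces the displayed list to be complete. First I would dispose of the small cases $n\in\{1,2,3\}$ by direct inspection of the adjacency matrix (for $n=1,2$ the graph is complete, and the spectrum for $n=3$ is read off directly), and then concentrate on $n\ge4$.

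For $n\ge4$ I would assemble the facts established so far: by Theorem~\ref{Th_Largest} the value $2n-2$ is a simple eigenvalue; by Theorem~\ref{Th_Least} the value $-3$ is an eigenvalue of multiplicity exactly $T(n-3)$; by Lemmas~\ref{lemma-u},~\ref{lemma-v} and~\ref{lemma-uuv} every integer $\lambda\in\left[-2,\lfloor\frac{n-7}{2}\rfloor\right]$ is an eigenvalue of multiplicity at least $3$ (witnessed by $u_{n,\lambda},u_{n,\lambda}^-,v_{n,\lambda}$), with the sole exception $\lambda=\frac{n-7}{2}$ when $n$ is odd, where $v_{n,\lambda}$ is undefined and only multiplicity $2$ is guaranteed; and symmetrically, by Lemmas~\ref{lemma-x},~\ref{lemma-y} and~\ref{lemma-xxy} every integer $\lambda\in\left[\lceil\frac{n-4}{2}\rceil,n-3\right]$ is an eigenvalue of multiplicity at least $3$, with the sole exception $\lambda=\frac{n-4}{2}$ when $n$ is even, where multiplicity $2$ is guaranteed.

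Next I would check that all these eigenvalues are pairwise distinct: the two integer ranges are disjoint because $\lfloor\frac{n-7}{2}\rfloor<\lceil\frac{n-4}{2}\rceil$, while $-3<-2$ and $2n-2>n-3$ separate $-3$ and $2n-2$ from both sequences. Since eigenspaces for distinct eigenvalues are mutually orthogonal, the sum of all the multiplicity lower bounds above is at most $T(n)$, and the crux is to show it equals $T(n)$ exactly. In both parities the two ranges together contain exactly $n-1$ integers, each contributing $3$ to the multiplicity except for the single boundary value adjacent to the gap, which contributes $2$; hence the two sequences account for $3(n-1)-1=3n-4$. Adding the simple eigenvalue $2n-2$ and the eigenvalue $-3$ of multiplicity $T(n-3)$ gives
\[
1+(3n-4)+\frac{(n-3)(n-2)}{2}=3n-3+\frac{n^2-5n+6}{2}=\frac{n^2+n}{2}=T(n).
\]
Consequently every inequality must be an equality: each multiplicity lower bound is in fact exact and no further eigenvalue exists, which yields precisely the stated spectrum.

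The step I expect to be the main obstacle is the parity bookkeeping. One must verify that exactly one integer value (namely $\frac{n-5}{2}$ when $n$ is odd and $\frac{n-6}{2}$ when $n$ is even) lies strictly between the two ranges and is therefore \emph{absent} from the spectrum, and that the boundary eigenvalue flanking this gap carries multiplicity $2$ rather than $3$. A floor/ceiling or sign slip here would destroy the exact count, so I would carry out the two parity computations independently and cross-check them against the explicit small instances $n=5$ and $n=6$.
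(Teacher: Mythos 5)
Your proposal is correct and follows essentially the same route as the paper: both assemble the multiplicity lower bounds from Theorems~\ref{Th_Largest} and~\ref{Th_Least} and Subsections~\ref{FirstSequence} and~\ref{SecondSequence}, and then observe that $1+2+3(n-2)+T(n-3)=T(n)$, forcing every bound to be exact and the listed eigenvalues to exhaust the spectrum. Your additional explicit checks (disjointness of the two integer ranges, identification of the missing value $\tfrac{n-5}{2}$ or $\tfrac{n-6}{2}$, and which boundary eigenvalue carries multiplicity $2$) are consistent with the paper, which leaves these details implicit.
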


\begin{proof}
For $n \in \{1, 2, 3\}$, it is immediate that ${\cal T}(n)$ is integral.\\
For $n \ge 4$, from Theorems~\ref{Th_Largest} and \ref{Th_Least} it follows that $2n-2$ and $-3$ are the largest and the least eigenvalues of ${\cal T}(n)$ with multiplicity $1$ and $T(n-3)$, respectively. Furthermore, from the results obtained in Subsections~\ref{FirstSequence} and \ref{SecondSequence}, we may conclude that $n-3, \dots, \lceil \frac{n-4}{2}\rceil, \lfloor\frac{n-7}{2}\rfloor, \dots, -2$ are also eigenvalues of ${\cal T}(n)$ whose lower bounds for their multiplicities were established as follows:
\begin{enumerate}
\item there is an eigenvalue with multiplicity $T(n-3)$ (which is $-3$);
\item there is a simple eigenvalue (which is $2n-2$);
\item there is an eigenvalue with multiplicity $2$ (which is $\frac{n-7}{2}$ and $\frac{n-4}{2}$, when $n$ is odd and even, respectively);
\item there are $n-2$ distinct eigenvalues with multiplicity $3$ (which are the remaining eigenvalues).
\end{enumerate}
Therefore, taking into account the above lower bounds, there are $T(n-3) + 1 + 2 + 3(n-2) = T(n)$ known linearly independent eigenvectors of ${\cal T}(n)$, corresponding to the $T(n)$ eigenvalues which, consequently, are totally determined.
\end{proof}

\section{Application to the $n$-Queens' graph}

In \cite{CardosoCostaDuarte2023} the authors proved that for $n \ge 4$, $-4$ is the least eigenvalue of the $n$-Queens' graph, ${\cal Q}(n)$, and its multiplicity is $(n-3)^2$.  In \cite{CardosoCostaDuarte2024}, other integer eigenvalues of ${\cal Q}(n)$ were studied and the following was proved.
\begin{enumerate}
\item $n-4$ is an eigenvalue of ${\cal Q}(3)$ with multiplicity $\frac{3+1}{2}=2$.
\item For $n \ge 4$:
      \begin{enumerate}
      \item when $n$ is even, $n-4 \in \sigma({\cal Q}(n))$, and it has multiplicity at least $\frac{n-2}{2}$;
      \item when $n$ is odd, $n-4, n-5, n-6, \dots, \frac{n-5}{2}, \frac{n-11}{2}, \dots, -2, -3 \in \sigma({\cal Q}(n))$, and  $n-4$ has multiplicity at least $\frac{n+1}{2}$.
      \end{enumerate}
\item Additionally, in \cite{CardosoCostaDuarte2024}, based in numerical computations for several values of $n$, the following conjecture was proposed.
\end{enumerate}

\begin{conjecture} {\rm \cite{CardosoCostaDuarte2024}}
For $n \ge 4$, the integers which appear above are the unique integer eigenvalues of $\mathcal{Q}(n)$. Furthermore, when $n$ is even the eigenvalue $n-4$ has multiplicity $\frac{n-2}{2}$, and when $n$ is odd the eigenvalue $n-4$ has multiplicity $\frac{n+1}{2}$. The eigenvalues $n-5, n-6, \dots, \frac{n-5}{2}, \frac{n-11}{2}, \dots, -2, -3$ are simple.
\end{conjecture}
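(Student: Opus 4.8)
The plan is to separate the statement into three sub-claims and attack them with different tools. That each listed integer really lies in $\sigma(\mathcal{Q}(n))$, together with the stated lower bounds on its multiplicity, is already available from \cite{CardosoCostaDuarte2023,CardosoCostaDuarte2024} and from the explicit eigenvectors that the analysis of Section~\ref{sec-SpectralPropTn} provides once they are embedded into the board of $\mathcal{Q}(n)$. What remains is therefore (A) that no further integer is an eigenvalue, (B) that the multiplicity of $n-4$ equals the stated value, and (C) that the remaining listed eigenvalues are simple. The common engine for (B), (C) and for the localisation needed in (A) is the edge decomposition $A(\mathcal{Q}(n)) = A_1 + A_2 + A_3$, where $A_1$ is the adjacency matrix of the graph whose components are $\mathcal{T}(n)$ and $\mathcal{T}(n-1)$ (its spectrum is completely known by Theorem~\ref{IntegralGraphSpectra}), $A_2$ is the adjacency matrix of the clique family (so its spectrum consists only of the integers $s-1$ and $-1$ coming from its components $K_s$), and $A_3$ is the adjacency matrix of the complete bipartite family (so its spectrum is $\{0\}$ together with pairs $\pm\sqrt{a_t b_t}$). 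Since the three summands act on the same vertex set, Weyl's inequalities apply verbatim to this sum.

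For (B) and (C) I would feed the fully known spectra of $A_1$, $A_2$, $A_3$ into the two-sided Weyl inequalities for a sum of three Hermitian matrices, namely $\lambda_{i+j+k-2}(A)\le \lambda_i(A_1)+\lambda_j(A_2)+\lambda_k(A_3)$ and its lower counterpart, where $\lambda_\ell(\cdot)$ denotes the $\ell$-th largest eigenvalue. Fixing a target value $\mu\in\{n-4\}\cup\{n-5,\dots,-3\}$, one lists all admissible triples $(\alpha,\beta,\gamma)$ of summand eigenvalues with $\alpha+\beta+\gamma=\mu$ that are compatible with the equality regime of Weyl, and bounds the number of indices $\ell$ with $\lambda_\ell(A)=\mu$ by a bookkeeping over these triples weighted by the summand multiplicities. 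Because $A_2$ contributes only the integers $s-1$ and $-1$ and $A_3$ contributes $0$ and the pairs $\pm\sqrt{a_t b_t}$, the admissible triples are few, and I expect the count to collapse to exactly $\frac{n-2}{2}$ (resp. $\frac{n+1}{2}$) for $\mu=n-4$ and to $1$ for each of the others, which combined with the known lower bounds yields (B) and (C). The delicate point is that equality in Weyl forces a shared eigenvector, so this step genuinely needs the explicit eigenvector families of Section~\ref{sec-SpectralPropTn} in order to check whether the putative common eigenvectors survive the perturbation by $A_2+A_3$.

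For (A) I would first use the Perron value, bounded by the maximum row sum of $A(\mathcal{Q}(n))$, together with the Weyl upper bound to confine $\sigma(\mathcal{Q}(n))$ to a bounded interval, so that only finitely many candidate integers survive for each $n$; the established least eigenvalue $-4$ of multiplicity $(n-3)^2$ pins the bottom of this interval. For each integer $\mu$ in the interval that is absent from the conjectured list, the goal is to certify $\det\bigl(A(\mathcal{Q}(n))-\mu I\bigr)\neq 0$. The natural route is to change basis to the triangular-graph eigenbasis of Section~\ref{sec-SpectralPropTn}, peel off the subspaces already accounted for by the known integer eigenvalues, and show that the residual factor of the characteristic polynomial has no integer root. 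This residual factor is governed by the non-integer (bipartite and interaction) part of the spectrum, and it is exactly here that I expect the real difficulty: there is no closed form or stable recursion for this factor as $n$ varies, and ruling out an accidental integer root uniformly in $n$ is precisely the content that keeps the statement a conjecture. A realistic intermediate target would be to settle (A) for the few integers immediately adjacent to the known band, where Weyl is tight enough to be decisive, and thereby reduce full uniqueness to a single, explicitly described integrality statement about that residual factor.
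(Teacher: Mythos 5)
The statement you set out to prove is, in the paper itself, explicitly a \emph{conjecture}: it is quoted from \cite{CardosoCostaDuarte2024}, supported only by numerical verification for $n \le 100$, and the paper offers no proof. The paper's actual use of the decomposition $A({\cal Q}(n)) = A(G_{1,2}) + A(G^1_3) + A(G^2_{3,H}) + A(G^2_{3,V})$ together with Weyl's inequalities (Theorems~\ref{Th_Decomposition} and \ref{Th_LowerUpperBoundsEigenvalues}) is deliberately limited to producing \emph{lower and upper bounds} on individual eigenvalues $\lambda_k({\cal Q}(n))$, as in Example~\ref{ex2} where one only gets $-3 \le \lambda_9({\cal Q}(4)) \le 4$. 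So the correct comparison is not between your proof and the paper's proof, but between your plan and an open problem — and your plan does not close it.

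The decisive gap is in your steps (B) and (C). Your bookkeeping over triples $(\alpha,\beta,\gamma)$ with $\alpha+\beta+\gamma=\mu$ presumes that an eigenvalue of the sum $A_1+A_2+A_3$ must be realized as a sum of eigenvalues of the summands. This is false for non-commuting Hermitian matrices: the summands here share no common invariant subspaces (the $-3$-eigenspace of $\mathcal{T}(n)$, for instance, is not preserved by the clique or bipartite parts), Weyl's inequalities are generically strict, and equality in a Weyl inequality is a degenerate condition one cannot impose by fiat to \emph{count} multiplicities. There is simply no mechanism by which the triple count "collapses" to $\frac{n-2}{2}$, $\frac{n+1}{2}$, or $1$; at best Weyl confines $\lambda_\ell({\cal Q}(n))$ to intervals, which is exactly what the paper already does and which cannot distinguish a simple eigenvalue from one of multiplicity two, nor certify that a nearby integer is \emph{not} an eigenvalue. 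Your step (A) inherits the same problem: "peeling off" the triangular-graph eigenbasis is illegitimate because those eigenspaces are not $A({\cal Q}(n))$-invariant, so the characteristic polynomial does not factor along them; and you yourself concede that the residual factor has no known structure and that excluding integer roots of it "is precisely the content that keeps the statement a conjecture." That concession is accurate, but it means your proposal reduces the conjecture to itself rather than proving it. What is sound in your write-up — the existence of the listed integer eigenvalues and the stated lower bounds on multiplicities — is exactly the part already established in \cite{CardosoCostaDuarte2023,CardosoCostaDuarte2024}; everything beyond that remains open.
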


From the computations the authors also concluded that this conjecture is true for $ n \le 100$. Note that ${\cal Q}(100)$ has $10 \, 000$ vertices.

\subsection{Graph decomposition of ${\cal Q}(n)$}

The graph decomposition of ${\cal Q}(n)$ is obtained from the vertex partition $V({\cal Q}(n)) = V_1 \cup V_2$ and an edge partition $E({\cal Q}(n)) = E_1 \cup E_2 \cup E_3$, where the $n \times n$ chessboard is divided into two right triangles, corresponding to the subgraphs $G_1 = (V_1,E_1)$ and $G_2 = (V_2,E_2)$, where the first one includes the vertices corresponding to the main diagonal of the chessboard as hypotenuse. The edges of $E_1$ and $E_2$ are chosen such that $G_1$ is a triangular graph ${\cal T}(n)$ and $G_2$ is a triangular graph ${\cal T}(n-1)$, both are integral regular graphs. Then we may consider a graph $G_{1,2} = G_1 + G_2$, that is, a graph with two connected components $G_1$ and $G_2$. The graph $G_3 = (V_3, E_3)$ is such that $V_3=V({\cal Q}(n))$ and $E_3 = E({\cal Q}(n)) \setminus E(G_{1,2})$.\\
\begin{theorem}\label{Th_Decomposition}
For $n \ge 4$, we may consider the following splitting of the adjacency matrix of the $n$-Queens' graph, ${\cal Q}(n)$,
\begin{eqnarray}
A({\cal Q}(n)) &=& A(G_{1,2}) + A(G_3)\label{MatrixSplitting1}\\
               &=& A(G_{1,2}) + \left(A(G^1_3) + \left(\underbrace{A(G^2_{3,H}) + A(G^2_{3,V})}_{A(G^2_3)}\right)\right),\label{MatrixSplitting2}
\end{eqnarray}
where each graph is obtained from the $n \times n$ chessboard as follows:
\begin{enumerate}
\item Consider the vertex partition $V({\cal Q}(n)) = V_1 \cup V_2$, where $V_1$ corresponds to the $\frac{n(n+1)}{2}$ vertices of the right triangle which includes the main diagonal as hypotenuse and $V_2$ corresponds to the remaining $\frac{(n-1)n}{2}$ vertices which defines the other right triangle. Choosing the edge subsets $E_1$ and $E_2$ such that $G_1=(V_1,E_1)$ and $G_2=(V_2,E_2)$ are the triangular graphs ${\cal T}(n)$ and ${\cal T}(n-1)$, respectively, we may conclude that the graph $G_{1,2}=G_1+G_2$ has two components $G_1 \cong {\cal T}(n)$ and $G_2 \cong {\cal T}(n-1)$. \label{GraphDecomposition1}
\item The graph $G_3 = (V_3,E_3)$ is such that $V_3=V({\cal Q}(n))$, $E_3=E({\cal Q}(n)) \setminus E(G_{1,2})$ and $A(G_3) = A(G^1_3) + A(G^2_3)$.\label{GraphDecomposition2}
      \begin{enumerate}
      \item The connected components of the graph $G^1_3$ are the cliques formed by the edges of the diagonals parallel to the second diagonal of the $n \times n$ chessboard, that is, $K_1$, $K_2$, $\dots$, $K_{n-1}$, $K_n$, $K_{n-1}$, $\dots$, $K_2$, $K_1$.\label{ConnectedComponents}
      \item Considering the vertices of $G_1$ as blue vertices and the vertices of $G_2$ as red vertices and partitioning the remaining edges of $E_3$, that is, $E(G^2_3) = E_3 \setminus E(G^1_3)$, into the subsets of horizontal and vertical edges between blue and red vertices, we obtain the graphs $G^2_{3,H}$ and $G^2_{3,V}$ such that $A(G^2_3) = A(G^2_{3,H})+ A(G^2_{3,V})$. The components of $G^2_{3,X}$, with $X \in \{H, V\}$, are the complete bipartite graphs $K_{1,n-1}$, $K_{2,n-2}$, $\dots$, $K_{n-1,1}$. Additionally, $G^2_{3,X}$ also includes $K_{n,0}$ which are just $n$ isolated vertices.
          \label{HorizontalVerticalEdges}
      \end{enumerate}
\end{enumerate}
\end{theorem}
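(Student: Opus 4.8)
The plan is to fix explicit coordinates on the board and then verify the decomposition one adjacency type at a time, since the statement is essentially a bookkeeping assertion about how the four adjacency types of $\mathcal{Q}(n)$ are distributed among the pieces. I would label the squares of the $n\times n$ board by pairs $(r,c)$ with $r,c\in[n]$, so that $(r,c)$ and $(r',c')$ are adjacent in $\mathcal{Q}(n)$ exactly when they share a row ($r=r'$), a column ($c=c'$), a descending diagonal ($r-c=r'-c'$), or an ascending diagonal ($r+c=r'+c'$). The first key remark, immediate from the definition, is that any pair of distinct squares lies on exactly one of these four kinds of line, so the four edge-types already partition $E(\mathcal{Q}(n))$. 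Taking the main diagonal as $\{(r,r)\}$, I set $V_1=\{(r,c):c\le r\}$ and $V_2=\{(r,c):c>r\}$, of sizes $\frac{n(n+1)}2$ and $\frac{(n-1)n}2$.

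First I would establish item~\eqref{GraphDecomposition1}, the two triangular components. On $V_1$ the identification $(r,c)\mapsto(i,j)=(r,c)$ is a valid triangular coordinate ($i\in[n]$, $j\in[i]$), and under it the relations ``same row'', ``same column'', ``same descending diagonal'' become exactly $i=k$, $j=\ell$, $i-j=k-\ell$, which are the three line families defining $\mathcal{T}(n)$; hence the subgraph $G_1$ carrying precisely these edges is isomorphic to $\mathcal{T}(n)$. For $V_2$ a nontrivial change of coordinates is needed: the map $(r,c)\mapsto(i,j)=(c-1,\,c-r)$ sends $V_2$ bijectively onto $\{(i,j):i\in[n-1],\,j\in[i]\}$ (with inverse $c=i+1$, $r=i+1-j$), and I would check that it carries ``same column'', ``same descending diagonal'', ``same row'' onto the row, column, diagonal relations of $\mathcal{T}(n-1)$, giving $G_2\cong\mathcal{T}(n-1)$. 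Since the $V_2$ incidences are the ones most prone to index errors, verifying these three correspondences is where I would be most careful; this is the only genuinely delicate point in the argument.

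Next I would pin down $E_3=E(\mathcal{Q}(n))\setminus E(G_{1,2})$. The crucial structural observation is that a descending diagonal $r-c=\text{const}$ lies entirely in $V_1$ (if the constant is $\ge 0$) or entirely in $V_2$ (if it is $<0$), so descending-diagonal edges never cross the partition and are wholly absorbed into $E_1\cup E_2$; likewise all row and column edges \emph{internal} to a triangle are absorbed. Hence $E_3$ consists precisely of (a) every ascending-diagonal edge and (b) the row and column edges joining a vertex of $V_1$ to a vertex of $V_2$. For item~\eqref{ConnectedComponents} I would note that each ascending diagonal $r+c=s$ ($s=2,\dots,2n$) is a clique of $\mathcal{Q}(n)$, that these cliques are pairwise vertex-disjoint, and that their sizes are $1,2,\dots,n-1,n,n-1,\dots,2,1$, yielding the stated clique components of $G^1_3$ and accounting for all edges of type (a).

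Finally I would treat $G^2_3$, whose edges are the cross edges of type (b), and split them by direction. In a fixed row $r$ there are $r$ blue vertices ($c\le r$) and $n-r$ red vertices ($c>r$), and since a row is a clique the blue--red cross edges form a complete bipartite graph $K_{r,n-r}$; letting $r$ run from $1$ to $n$ gives the horizontal components $K_{1,n-1},\dots,K_{n-1,1}$ together with $K_{n,0}$ (row $n$ is entirely blue). Symmetrically, a fixed column $c$ has $c-1$ red vertices and $n-c+1$ blue ones, producing the vertical components $K_{c-1,\,n-c+1}$, again the list $K_{1,n-1},\dots,K_{n-1,1}$ plus the isolated $K_{n,0}$. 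Routing the horizontal cross edges into $G^2_{3,H}$ and the vertical ones into $G^2_{3,V}$ gives $A(G^2_3)=A(G^2_{3,H})+A(G^2_{3,V})$, and checking that the buckets (1), (a), (b) are disjoint and exhaust $E(\mathcal{Q}(n))$ confirms that \eqref{MatrixSplitting1}--\eqref{MatrixSplitting2} is a genuine splitting of $A(\mathcal{Q}(n))$.
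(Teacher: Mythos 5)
Your proof is correct, but it takes a genuinely different route from the paper's. The paper proves Theorem~\ref{Th_Decomposition} by induction on $n$: the base case $n=4$ is checked explicitly in Example~\ref{ex1}, and the inductive step observes that the $(n+1)\times(n+1)$ board arises from the $n\times n$ board by adding a bottom row of $n+1$ squares (coloured blue) and a right column of $n$ squares (coloured red), so the triangular components grow from ${\cal T}(n)+{\cal T}(n-1)$ to ${\cal T}(n+1)+{\cal T}(n)$, while the extension of the clique part and the bipartite parts is dismissed as ``easily extended''. You instead give a direct, uniform-in-$n$ verification: you classify each edge of ${\cal Q}(n)$ by the unique line (row, column, descending or ascending diagonal) containing it, exhibit explicit coordinate isomorphisms $G_1\cong{\cal T}(n)$ (the identity map) and $G_2\cong{\cal T}(n-1)$ (via $(r,c)\mapsto(c-1,\,c-r)$, which you correctly check permutes the roles of rows, columns and diagonals), note that descending diagonals never cross the partition, and then sort the leftover edges into the ascending-diagonal cliques and the horizontal/vertical cross edges forming the complete bipartite graphs. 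Your route is arguably the more rigorous of the two: it supplies exactly the content the paper's inductive step compresses --- the explicit isomorphism for $G_2$ and the bucket-by-bucket disjointness and exhaustiveness check --- whereas the induction buys a picture of how the decomposition grows with the board, anchored in the worked $4\times4$ example. One phrasing nit: ``any pair of distinct squares lies on exactly one of these four kinds of line'' should read ``any pair of distinct \emph{adjacent} squares'' (non-adjacent pairs lie on none); since you only invoke the claim for edges, nothing in your argument breaks.
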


\begin{proof}
We prove this theorem by induction on $n$, taking into account that, according to Example~\ref{ex1} below, the matrix splitting \eqref{MatrixSplitting1}-\eqref{MatrixSplitting2} associated to the graph decomposition described in items \eqref{GraphDecomposition1} and \eqref{GraphDecomposition2} is true for $n=4$.
Let us assume that the adjacency matrix of ${\cal Q}(n)$ can be split as in \eqref{MatrixSplitting1}-\eqref{MatrixSplitting2}, for some $n\ge 4$.

The $(n+1) \times (n+1)$ chessboard associated to ${\cal Q}(n+1)$ is obtained from the $n \times n$ chessboard associated to ${\cal Q}(n)$ adding a row of $n$ squares at the bottom, a column of $n$ squares at the right and a square to the right bottom position of its main diagonal. Coloring the vertices corresponding to the new bottom row with $n+1$ squares by the colour blue and colouring the vertices associated to the remaining new $n$ squares (column on the right) by the colour red, we obtain the graph $G_{1,2}=G_1+G_2$ with two components $G_1 \cong {\cal T}(n+1)$ and $G_2 \cong {\cal T}(n)$ as in item-\ref{GraphDecomposition1}.

The graph decomposition described in item \eqref{GraphDecomposition2} is easily extended from ${\cal Q}(n)$ to ${\cal Q}(n+1)$.
\end{proof}

\begin{example}\label{ex1}
Let us consider the graph ${\cal Q}(4)$ associated to the chessboard presented in Table~\ref{xadrez}

\begin{table}[h] 
\begin{center}
\begin{tabular}{|c|c|c|c|}
\hline
      {\color{blue}a} & {\color{red}b}  & {\color{red}c} & {\color{red}d} \\ \hline
      {\color{blue}e} & {\color{blue}f} & {\color{red}g} & {\color{red}h} \\ \hline
      {\color{blue}i} & {\color{blue}j} & {\color{blue}k}& {\color{red}l} \\ \hline
      {\color{blue}m} & {\color{blue}n} & {\color{blue}o}& {\color{blue}p}\\ \hline
\end{tabular}
\caption{$4 \times 4$ chessboard divided into two triangles}\label{xadrez}
\end{center}
\end{table}

Considering the vertex partition and edge partition referred in Theorem~\ref{Th_Decomposition}, it follows that ${\color{blue} G_1} \cong {\cal T}(4)$ and ${\color{red} G_2} \cong {\cal T}(3)$. From Theorem~\ref{Th_Largest} it follows that the largest eigenvalues of ${\cal T}(4)$ (and ${\cal T}(3)$) is simple and equal to $2\times 4 - 2 = 6$ (is simple and equal to $2\times 3 - 2 = 4$, respectively). Furthermore, from Theorem~\ref{Th_Least}, we know that the least eigenvalue of ${\cal T}(4)$ has multiplicity $\frac{(4-3)(4-2))}{2}=1$ and is equal to $-3$. Additionally, as it easy to conclude, $\sigma({\cal T}(3)) = \{4, 0^{[3]}, -2^{[2]}\}$. On the other hand,
$\sigma({\cal T}(4))=\{6, 1^{[3]}, 0^{[2]}, -2^{[3]}, -3 \}$. Therefore,
\begin{eqnarray*}
\sigma(G_{1,2}) &=& \{6, 1^{[3]}, 0^{[2]}, -2^{[3]}, -3 \} \underbrace{\bigcup}_{\text{with \; repetitions}} \{4, 0^{[3]}, -2^{[2]}\}\\
                &=& \{6, 4, 1^{[3]}, 0^{[5]}, -2^{[5]}, -3 \}.
\end{eqnarray*}

Analyzing the Table~\ref{xadrez} and the possible edge partitions we conclude following.

\begin{enumerate}
\item The connected components of the graph $G^1_3$ are the cliques $C_1=\{{\color{blue}a}\}$, $C_2=\{{\color{blue}e}, {\color{red}b}\}$, $C_3=\{{\color{blue}i}, {\color{blue}f}, {\color{red}c}\}$, $C_4=\{{\color{blue}m}, {\color{blue}j}, {\color{red}g}, {\color{red}d}\}$, $C_5=\{{\color{blue}n}, {\color{blue}k}, {\color{red}h}\}$, $C_6=\{{\color{blue}o}, {\color{red}l}\}$ and $C_7=\{{\color{blue}p}\}$, formed by the edges of the diagonals parallel to the second diagonal in Table~\ref{xadrez}. So, the components of $G^1_3$ are the subgraphs $K_1=G_3[C_1]$, $K_2=G_3[C_2]$, $K_3=G_3[C_3]$, $K_4=G_3[C_4]$, $K_3=G_3[C_5]$, $K_2=G_3[C_6]$ and $K_1=G_3[C_7]$, where the adjacency matrix of each clique appears as a diagonal square submatrix of $A(G^1_3)$. Therefore,
    \begin{equation}\label{sigmaG3}
    \sigma(G^1_3) = \{3, 2^{[2]}, 1^{[2]}, 0^{[2]}, -1^{[9]}\}.
    \end{equation}
\item The remaining edges can be partitioned into the horizontal (H) and vertical (V) edges (relatively to Table-\ref{xadrez}), defining two graphs, $G^2_{3,H}$ and $G^2_{3,V}$, whose componentes are the following complete bipartite subgraphs:
      \begin{enumerate}
      \item The graph $G^2_{3,H}$ is formed by $K_{1,3}$, with vertex bipartition $(\{{\color{blue}a}\},\{{\color{red}b}, {\color{red}c}, {\color{red}d}\})$, $K_{2,2}$, with vertex bipartition $(\{{\color{blue}e}, {\color{blue}f}\},\{{\color{red}g}, {\color{red}h}\})$, $K_{3,1}$, with vertex bipartition $(\{{\color{blue}i}, {\color{blue}j}, {\color{blue}k}\}, \{{\color{red}l}\})$ and $K_{4,0}$, with vertex bipartition
          $(\{{\color{blue}m}, {\color{blue}n}, {\color{blue}o}, {\color{blue}p}\}, {\color{red} \emptyset})$;
      \item The graph $G^2_{3,V}$ is formed by $K_{1,3}$, with vertex bipartition $(\{{\color{blue}p}\},\{{\color{red}d}, {\color{red}h}, {\color{red}l}\})$, $K_{2,2}$, with vertex partition $(\{{\color{blue}o}, {\color{blue}k}\}, \{{\color{red}g}, {\color{red}c}\})$, $K_{3,1}$, with vertex partition $(\{{\color{blue}n}, {\color{blue}j}, {\color{blue}f}\}, \{{\color{red}b}\})$ and $K_{4,0}$, with vertex partition $(\{{\color{blue}m}, {\color{blue}i}, {\color{blue}e}, {\color{blue}a}\}, {\color{red} \emptyset})$.
      \end{enumerate}
      Each complete bipartite graph appears as represented by its adjacency matrix in $A(G^2_{3,H})$ and $A(G^2_{3,V})$, where they are square diagonal submatrices. It is assumed that $K_{4,0}$ has no edges.\\

      Since $\sigma(A(G^2_{3,H}) = \sigma(A(G^2_{3,V})$ it follows that
      \begin{eqnarray}
      \sigma(A(G^2_{3,X}) &=& \sigma(K_{1,3}) \cup \sigma(K_{2,2}) \cup \sigma(K_{3,1}) \cup \sigma(K_{4,0}) \nonumber \\
                          &=& \{\sqrt{3}, 0^{[2]}, -\sqrt{3}\} \cup \{2, 0^{[2]}, -2\} \cup \{\sqrt{3}, 0^{[2]},-\sqrt{3}\} \cup \{0^{[4]}\} \nonumber \\
                          &=& \{2, \; \sqrt{3}^{[2]}, \; 0^{[10]}, \; -\sqrt{3}^{[2]},  \; -2 \},\label{sigma_3X}
      \end{eqnarray}
      for $X = H$ and $X = V$.
\end{enumerate}
Therefore, $A(G^2_3) = A(G^2_{3,H}) + A(G^2_{3,V})$ and  the adjacency matrix of $G_3$, is
\begin{equation}\label{G3}
A(G_3) = A(G^1_3) + \underbrace{A(G^2_{3,H}) + A(G^2_{3,V})}_{A(G^2_3)}.
\end{equation}
Taking into account \eqref{G3}, we obtain
\begin{eqnarray}
A({\cal Q}(4)) &=& A(G_{1,2}) + A(G_3) \nonumber \\
               &=& A(G_{1,2}) + \left(A(G^1_3) + \left(A(G^2_{3,H}) + A(G^2_{3,V})\right)\right).\label{AQ}
\end{eqnarray}
\end{example}

\subsection{Lower and upper bounds on the eigenvalues of ${\cal Q}(n)$}
In this section, we apply the Weyl's inequalities to the splitting \eqref{AQ}, which corresponds to the graph decomposition of ${\cal Q}(n)$, to determine lower and upper bounds for the eigenvalues of $\mathcal{Q}(n)$.

\begin{theorem}{\rm (Weyl, 1912) \cite{1912Weyl}} \label{Weil_inequalities} \\
For Hermitian matrices $A,B \in \mathbb{C}^{n\times n}$, considering the eigenvalues in non-increasing order, $\lambda_1(X) \ge \lambda_2(X) \ge \dots \ge \lambda_n(X)$ \and $1 \le i,j \le n$,
\begin{eqnarray}
\lambda_{i+j-1}(A+B)        & \le & \lambda_i(A) + \lambda_j(B), \quad i+j \le n+1, \label{Weil_inequalities1} \\
\lambda_i(A) + \lambda_j(B) & \le & \lambda_{i+j-n}(A+B).        \quad i+j \ge n+1. \label{Weil_inequalities2}
\end{eqnarray}
\end{theorem}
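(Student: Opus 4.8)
The plan is to derive both inequalities from the Courant--Fischer min--max characterization of the eigenvalues of a Hermitian matrix, proving \eqref{Weil_inequalities1} directly and then obtaining \eqref{Weil_inequalities2} from it by a sign change. Recall that for a Hermitian $M \in \mathbb{C}^{n \times n}$ with eigenvalues $\lambda_1(M) \ge \cdots \ge \lambda_n(M)$ one has
\[
\lambda_k(M) = \min_{\substack{S \subseteq \mathbb{C}^n \\ \dim S = n-k+1}} \ \max_{\substack{x \in S \\ \|x\| = 1}} x^* M x,
\]
and dually the same quantity equals the maximum over $k$-dimensional subspaces of the minimum of the Rayleigh quotient. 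I would take the above min--max form as the main tool.

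For \eqref{Weil_inequalities1}, fix orthonormal eigenbases $u_1, \ldots, u_n$ of $A$ and $v_1, \ldots, v_n$ of $B$, with $A u_k = \lambda_k(A) u_k$ and $B v_k = \lambda_k(B) v_k$. Introduce the subspaces $U = \operatorname{span}\{u_i, \ldots, u_n\}$ and $V = \operatorname{span}\{v_j, \ldots, v_n\}$, of dimensions $n-i+1$ and $n-j+1$. Expanding a unit vector of $U$ in the basis $u_i,\ldots,u_n$ shows that the Rayleigh quotient of $A$ is bounded above by $\lambda_i(A)$ on $U$, and likewise that of $B$ by $\lambda_j(B)$ on $V$. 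The key step is the dimension count
\[
\dim(U \cap V) \ge \dim U + \dim V - n = n - i - j + 2,
\]
which is positive precisely when $i + j \le n+1$. Choosing any subspace $W \subseteq U \cap V$ with $\dim W = n - (i+j-1) + 1 = n-i-j+2$ and feeding it into the min--max formula for $\lambda_{i+j-1}(A+B)$ gives
\[
\lambda_{i+j-1}(A+B) \le \max_{\substack{x \in W \\ \|x\|=1}} x^*(A+B)x \le \max_{\substack{x \in W \\ \|x\|=1}} \bigl(x^* A x + x^* B x\bigr) \le \lambda_i(A) + \lambda_j(B),
\]
since every unit vector $x \in W$ lies simultaneously in $U$ and in $V$.

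Finally, \eqref{Weil_inequalities2} follows by applying \eqref{Weil_inequalities1} to the Hermitian matrices $-A$ and $-B$ together with the identity $\lambda_k(-M) = -\lambda_{n+1-k}(M)$; after the reindexing $i \mapsto n+1-i$ and $j \mapsto n+1-j$ the first inequality transforms exactly into the second, and the constraint $i+j\le n+1$ becomes $i+j\ge n+1$. I expect no serious obstacle here: the only delicate points are the dimension-intersection argument and keeping the index bookkeeping consistent—in particular the shift between $i+j-1$ in \eqref{Weil_inequalities1} and $i+j-n$ in \eqref{Weil_inequalities2}, and the complementary ranges $i+j\le n+1$ versus $i+j\ge n+1$.
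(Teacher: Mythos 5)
Your proof is correct. Note, however, that the paper itself offers no proof of this statement: it is quoted as a classical result with a citation to Weyl's 1912 paper, so there is no internal argument to compare yours against. What you give is the standard Courant--Fischer proof, and it is complete: the min--max characterization is stated in the correct form for non-increasing eigenvalues, the dimension count $\dim(U\cap V)\ge \dim U+\dim V-n=n-i-j+2$ matches exactly the subspace dimension $n-(i+j-1)+1$ required in the min--max formula for $\lambda_{i+j-1}(A+B)$, and the condition $i+j\le n+1$ is precisely what makes this dimension positive. The derivation of \eqref{Weil_inequalities2} from \eqref{Weil_inequalities1} via $\lambda_k(-M)=-\lambda_{n+1-k}(M)$ with the substitution $i\mapsto n+1-i$, $j\mapsto n+1-j$ is also handled correctly, including the transformation of the index $i+j-1$ into $i+j-n$ and of the range $i+j\le n+1$ into $i+j\ge n+1$. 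In effect you have supplied a self-contained proof where the paper relies on an external reference, which is a legitimate strengthening of the exposition.
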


As direct consequence of Theorem~\ref{Th_Decomposition}, we may apply the Weyl's inequalities \eqref{Weil_inequalities1}-\eqref{Weil_inequalities2}, taking into account the matrix equation \eqref{MatrixSplitting2}.
First, let us determine the spectrum of the graphs $G_{1,2}$, $G^1_3$, $G^2_{3,H}$ and $G^2_{3,V}$.

\begin{itemize}
\item \textbf{Spectrum of \boldmath$G_{1,2}$.} Since $G_{1,2}$ has two components, one isomorphic to ${\cal T}(n)$ and other isomorphic to ${\cal T}(n-1)$, it follows that
    $$
    \sigma(G_{1,2}) = \sigma({\cal T}(n)) \underbrace{\bigcup}_{\text{with repetitions}} \sigma({\cal T}(n-1)).
    $$
    Then, from Theorem~\ref{IntegralGraphSpectra} (Theorem 6.3.20 in \cite{Costa2024}), we have the following:
    \begin{enumerate}
    \item If $n$ is odd, since
    \begin{align*}
    \sigma({\cal T}(n)) = & \left\{ 2n-2, n-3^{[3]}, \dots, \frac{n-3}{2}^{[3]}, \frac{n-7}{2}^{[2]}, \frac{n-9}{2}^{[3]}, \ldots, -2^{[3]}, -3^{[T(n-3)]} \right\}
    \end{align*}
    and
    $$
    \sigma({\cal T}(n-1)) = \left\{ 2n-4, n-4^{[3]}, \dots, \frac{n-3}{2}^{[3]}, \frac{n-5}{2}^{[2]}, \frac{n-9}{2}^{[3]}, \dots, -2^{[3]}, -3^{[T(n-4)]} \right\},
    $$
    we get
    \begin{eqnarray}
    \sigma(G_{1,2}) &=& \left\{ 2n-2, 2n-4, n-3^{[3]}, n-4^{[3]}, \dots, \frac{n-3}{2}^{[6]}, \frac{n-5}{2}^{[2]}, \frac{n-7}{2}^{[2]}, \right. \nonumber \\
                    & & \left. \frac{n-9}{2}^{[6]}, \dots, -2^{[6]}, -3^{[T(n-3)+T(n-4)]} \right\}. \label{g23odd}
    \end{eqnarray}

    \item If $n$ is even, since
    $$
    \sigma({\cal T}(n)) = \left\{ 2n-2, n-3^{[3]}, \dots, \frac{n-2}{2}^{[3]}, \frac{n-4}{2}^{[2]}, \frac{n-8}{2}^{[3]}, \dots, -2^{[3]}, -3^{[T(n-3)]} \right\}
    $$
    and
    $$
    \sigma({\cal T}(n-1)) = \left\{ 2n-4, n-4^{[3]}, \dots, \frac{n-4}{2}^{[3]}, \frac{n-8}{2}^{[2]}, \frac{n-10}{2}^{[3]}, \dots, -2^{[3]}, -3^{[T(n-4)]} \right\},
    $$
    we get
    \begin{eqnarray}
    \sigma(G_{1,2}) &=& \left\{ 2n-2, 2n-4, n-3^{[3]}, n-4^{[3]}, \dots, \frac{n-2}{2}^{[3]},\frac{n-4}{2}^{[5]}, \frac{n-8}{2}^{[5]}, \right. \nonumber \\
                    & & \left. \frac{n-10}{2}^{[3]}, \dots, -2^{[6]}, -3^{[T(n-3)+T(n-4)]} \right\}. \label{g23even}
    \end{eqnarray}
    \end{enumerate}
\item \textbf{Spectrum of \boldmath$G^1_3$.} The connected components of the graph $G^1_3$ are the cliques $K_1$, $K_2$, {\color{red}$\dots$}, $K_{n-1}$, $K_{n}$, $K_{n-1}$, $\dots$, $K_2$, $K_1$, where the adjacency matrix of each clique appears as a diagonal square submatrix of $A(G^1_3)$. Therefore,
    \begin{equation}
    \sigma(G^1_3) = \{n-1, n-2^{[2]}, \dots, 1^{[2]}, 0^{[2]}, -1^{[(n-1)^2]}\}.\label{g13}
    \end{equation}
\item \textbf{Spectrum of \boldmath$G^2_{3,H}$} which is equal to the \textbf{spectrum of \boldmath$G^2_{3,V}$}. Colouring the vertices of $G_1 \cong {\cal T}(n)$ and $G_2 \cong {\cal T}(n-1)$ as described in Theorem~\ref{Th_Decomposition}-\eqref{HorizontalVerticalEdges}, the connected components $G^2_{3,H}$ and $G^2_{3,V}$  are formed by the remaining edges of ${\cal Q}(n)$, that is, $E({\cal Q}(n)) \setminus (E(G_{2,3}) \cup E(G^1_3))$, which are edges between blue and red vertices. These remaining edges can be partitioned into the horizontal edges ($E(G^2_{3,H})$) and vertical edges ($E(G^2_{3,V})$). Then, the connected components of a graph $G^2_{3,X}$, for each $X \in \{H, V\}$, are the complete bipartite graphs $K_{i,n-i}$ for each $i \in \{1, 2, \dots, n\}$. Therefore,
    \begin{eqnarray}
    \sigma(G^2_{3,X}) &=& \underbrace{\bigcup_{i=1}^{n}{\sigma(K_{i,n-i})}}_{\text{with repetitions}} \nonumber\\
                      &=& \bigcup_{i=1}^{n-1}{\{\sqrt{(n-i)i}, 0^{[(n-1)^2+1]}, -\sqrt{(n-i)i}\}} \label{g23X}.
    \end{eqnarray}
\end{itemize}

 Taking into account that an eigenvalue of the adjacency matrix of a graph $G$ is also simple called the eigenvalue of $G$, that is, $\lambda_k(A(G)) = \lambda_k(G)$, let us introduce the following theorem.

\begin{theorem}\label{Th_LowerUpperBoundsEigenvalues}
Consider the graph decomposition described in Theorem~\ref{Th_Decomposition} and the corresponding matrix splitting \eqref{MatrixSplitting1}-\eqref{MatrixSplitting2}. Then we obtain lower and upper bounds on the eigenvalues of ${\cal Q}(n)$ as follows.\\
Assume that $1 \le i_k,j_k, r_k, s_k \le n^2$, $i_k+j_k \le n^2+1$ and $r_k+s_k \ge n^2+1$, for each $k \in \{1, 2, 3\}$.
\begin{eqnarray*}
\lambda_{i_1+j_1-1}\left({\cal Q}(n)\right) \le & \lambda_{p_1}\left(G_{1,2}\right) + \lambda_{q_1}\left(G_3\right) & \le \lambda_{r_1+s_1-n^2}\left({\cal Q}(n)\right)\\
\underbrace{\lambda_{i_2+j_2-1}}_{\lambda_{j_1}} \left(G_{3}\right) \le & \lambda_{p_2}\left(G^1_{3}\right) + \lambda_{q_2}\left(G^2_{3}\right) & \le
\underbrace{\lambda_{r_2+s_2-n^2}}_{\lambda_{s_1}} \left(G^2_{3}\right) \\
\underbrace{\lambda_{i_3+j_3-1}}_{\lambda_{j_2}} \left(G^2_{3}\right) \le & \lambda_{p_3}\left(G^2_{3,H}\right) + \lambda_{q_3}\left(G^2_{3,V}\right) & \le
\underbrace{\lambda_{r_3+s_3-n^2}}_{\lambda_{s_2}} \left(G^2_{3}\right),
\end{eqnarray*}
where $p_k=i_k$ and $q_k=j_k$ for the lower bounds and $p_k=r_k$ and $q_k=s_k$ for the upper bounds.
\end{theorem}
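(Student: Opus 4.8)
The plan is to obtain all three displayed chains as successive applications of Weyl's inequalities (Theorem~\ref{Weil_inequalities}) to the three matrix splittings supplied by Theorem~\ref{Th_Decomposition}, linking the index choices through the underbraced identities. Before invoking Weyl I would first record the hypothesis it needs: all matrices in sight are real symmetric (hence Hermitian) and of the common order $n^2 \times n^2$. Indeed $G_3$, $G^1_3$, $G^2_3$, $G^2_{3,H}$ and $G^2_{3,V}$ all carry the vertex set $V(\mathcal{Q}(n))$ of cardinality $n^2$, while $G_{1,2}$ has $T(n)+T(n-1)=\frac{n(n+1)}{2}+\frac{(n-1)n}{2}=n^2$ vertices. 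This legitimises applying Theorem~\ref{Weil_inequalities} with matrix dimension $n^2$ at every level.

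For the first line I would use the splitting \eqref{MatrixSplitting1}, $A(\mathcal{Q}(n)) = A(G_{1,2}) + A(G_3)$. Inequality \eqref{Weil_inequalities1} with $(i,j)=(i_1,j_1)$, admissible since $i_1+j_1 \le n^2+1$, yields the left inequality $\lambda_{i_1+j_1-1}(\mathcal{Q}(n)) \le \lambda_{i_1}(G_{1,2}) + \lambda_{j_1}(G_3)$; inequality \eqref{Weil_inequalities2} with $(i,j)=(r_1,s_1)$, admissible since $r_1+s_1 \ge n^2+1$, yields the right inequality $\lambda_{r_1}(G_{1,2}) + \lambda_{s_1}(G_3) \le \lambda_{r_1+s_1-n^2}(\mathcal{Q}(n))$. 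Writing $p_1=i_1,q_1=j_1$ for the lower bound and $p_1=r_1,q_1=s_1$ for the upper bound reproduces the first chain verbatim.

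The second and third lines follow in exactly the same manner, but the index choices are what couple the levels. Applying \eqref{Weil_inequalities1}--\eqref{Weil_inequalities2} to $A(G_3)=A(G^1_3)+A(G^2_3)$ and selecting $(i_2,j_2)$ with $i_2+j_2-1=j_1$ together with $(r_2,s_2)$ with $r_2+s_2-n^2=s_1$ forces the two outer eigenvalues of the second line to be precisely the term $\lambda_{q_1}(G_3)$ that the first line left undetermined; this is the content of the underbraces $\lambda_{j_1}$ and $\lambda_{s_1}$. One further application to $A(G^2_3)=A(G^2_{3,H})+A(G^2_{3,V})$, now with $i_3+j_3-1=j_2$ and $r_3+s_3-n^2=s_2$, resolves the remaining term $\lambda_{q_2}(G^2_3)$. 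Substituting the third chain into the second, and that into the first, then produces fully explicit two-sided estimates for $\mathcal{Q}(n)$, because $\sigma(G_{1,2})$, $\sigma(G^1_3)$ and $\sigma(G^2_{3,H})=\sigma(G^2_{3,V})$ are all known in closed form from Theorem~\ref{IntegralGraphSpectra} and equations \eqref{g23odd}--\eqref{g23X}.

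The mathematical content is therefore light, and I expect the main obstacle to be purely the index bookkeeping: checking that the chaining identities $i_2+j_2-1=j_1$, $r_2+s_2-n^2=s_1$, $i_3+j_3-1=j_2$, $r_3+s_3-n^2=s_2$ can be met simultaneously with all the admissibility constraints $1 \le i_k,j_k,r_k,s_k \le n^2$, $i_k+j_k \le n^2+1$, $r_k+s_k \ge n^2+1$, and keeping straight that \eqref{Weil_inequalities1} always delivers an \emph{upper} bound while \eqref{Weil_inequalities2} always delivers a \emph{lower} bound, so that the composed inequalities point in the asserted directions. I would also take care that the rightmost matrix appearing in the second chain is read as $G_3$, the sum $A(G^1_3)+A(G^2_3)$ to which \eqref{Weil_inequalities2} is actually applied.
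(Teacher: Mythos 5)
Your proposal is correct and takes essentially the same route as the paper: three applications of Weyl's inequalities, one to each level of the splitting \eqref{MatrixSplitting1}--\eqref{MatrixSplitting2}, with the levels coupled through the index identities $i_{k+1}+j_{k+1}-1=j_k$ and $r_{k+1}+s_{k+1}-n^2=s_k$ (the paper merely presents the three applications in the reverse, innermost-first order, which is immaterial). Your closing remark is also right on both counts: the argument is pure index bookkeeping once the common dimension $n^2$ is checked, and the rightmost matrix in the second chain must indeed be read as $G_3$ rather than $G^2_3$, a typo in the statement.
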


\begin{proof}
Let us apply Theorem~\ref{Weil_inequalities} to the matrix splitting \eqref{MatrixSplitting1}-\eqref{MatrixSplitting2}, considering starting values for the indices $i_1$, $i_2$, $i_3$ and  $j_3$ and also for the indices $r_1$, $r_2$, $r_3$ and $s_3$.
\begin{enumerate}
\item Consider the indices $1 \le i_3, j_3, r_3, s_3 \le n^2$ such that $i_3+j_3-1 = j_2$ and $r_3 + s_3 - n^2 = s_2$.
\begin{eqnarray*}
\underbrace{\lambda_{i_3+j_3-1}}_{\lambda_{j_2}} (G^2_3) & \le &\lambda_{i_3}(G^2_{3,H}) + \lambda_{j_3}(G^2_{3,V}), \text{ for } i_3+j_3 \le n^2+1 \\
\lambda_{r_3}(G^2_{3,H}) + \lambda_{s_3}(G^2_{3,V}) & \ge & 
\underbrace{\lambda_{r_3+s_3- n^2}}_{\lambda_{s_2}} (G^2_3), \text{ for } r_3+s_3 \ge n^2+1,
\end{eqnarray*}
 were $\lambda_{i_3}(G^2_{3,H}), \lambda_{j_3}(G^2_{3,V}), \lambda_{r_3}(G^2_{3,H})$ and $\lambda_{s_3}(G^2_{3,V})$ can be obtained from \eqref{g23X}.
\item Consider the indices $1 \le i_2, j_2, r_2, s_2 \le n^2$ such that $i_2+j_2-1 = j_1$ and $r_2 + s_2 - n^2 = s_1$.
\begin{eqnarray*}
\underbrace{\lambda_{i_2+j_2-1}}_{\lambda{j_1}} (G_3) & \le &\lambda_{i_2}(G^1_3) + \lambda_{j_2}(G^2_3), \text{ for } i_2+j_2 \le n^2+1 \\
\lambda_{r_2}(G^1_3) + \lambda_{s_2}(G^2_3) & \ge & 
\underbrace{\lambda_{r_2+s_2- n^2}}_{\lambda_{s_1}} (G_3), \text{ for } r_2+s_2 \ge n^2+1,
\end{eqnarray*}
were $\lambda_{i_2}(G^1_3)$ and $\lambda_{r_2}(G^1_3)$ can be obtained from \eqref{g13}.
\item Consider the indices $1 \le i_1,j_1, r_1, s_1 \le n^2$.
\begin{eqnarray*}
\lambda_{i_1+j_1-1}({\cal Q}(n))            & \le & \lambda_{i_1}(G_{1,2}) + \lambda_{j_1}(G_3), \text{ for } i_1+j_1 \le n^2+1\\
\lambda_{r_1}(G_{1,2}) + \lambda_{s_1}(G_3) & \ge & \lambda_{r_1+s_1- n^2}({\cal Q}(n)), \text{ for } r_1+s_1 \ge n^2+1,
\end{eqnarray*}
were $\lambda_{i_1}(G_{1,2})$ and $\lambda_{r_1}(G_{1,2})$ can be obtained from \eqref{g23odd} when $n$ is odd and from \eqref{g23even} when $n$ is even.
\end{enumerate}
\end{proof}

\begin{example}\label{ex2}

Considering the graph associated to the chessboard of Example~\ref{ex1}, let us apply Theorem~\ref{Th_LowerUpperBoundsEigenvalues}
starting with $i_1=3$, $i_2=1$ and $i_3=j_3=4$.

\begin{eqnarray*}
\underbrace{\lambda_{i_3+j_3-1}}_{\lambda_{j_2}=\lambda_7} \left(\underbrace{A(G^2_{3,H}) + A(G^2_{3,V}}_{A(G^2_3)})\right) & \le & \underbrace{\lambda_{4}(G^2_{3,H})}_{0}+\underbrace{\lambda_{4}(G^2_{3,V})}_{0}\\
\underbrace{\lambda_{i_2+j_2-1}}_{\lambda_{j_1}=\lambda_7}\left(\underbrace{A(G^1_3) + A(G^2_3)}_{A(G_3)}\right) & \le & \underbrace{\lambda_{1}(G^1_3)}_{3} + \lambda_{7}(G^2_3) \le 3+0 \\
\underbrace{\lambda_{i_1+j_1-1}}_{\lambda_9} \left(\underbrace{A(G_{1,2})+A(G_3)}_{{\cal Q}(4)}\right) & \le & \underbrace{\lambda_{3}(G_{1,2})}_{1} + \lambda_{7}(G_3) \le 1+3.
\end{eqnarray*}
On the other hand, starting with $r_1=15$, $r_2=16$ and $r_3=s_3=13$, we obtain
\begin{eqnarray*}
\underbrace{\lambda_{r_3+s_3-16}}_{\lambda_{s_2}=\lambda_{10}} \left(\underbrace{A(G^2_{3,H}) + A(G^2_{3,V}}_{A(G^2_3)}\right) & \ge & \underbrace{\lambda_{13}(G^2_{3,H})}_{0}+\underbrace{\lambda_{13}(G^2_{3,V})}_{0}\\
\underbrace{\lambda_{r_2+s_2-16}}_{\lambda_{s_1} = \lambda_{10}} \left(\underbrace{A(G^1_3) + A(G^2_3)}_{A(G_3)}\right) & \ge & \underbrace{\lambda_{16}(G^1_3)}_{-1} + \lambda_{10}(G^2_3) \ge -1+0 \\
\underbrace{\lambda_{r_1+s_1-16}}_{\lambda_9} \left(\underbrace{A(G_{1,2})+A(G_3)}_{{\cal Q}(4)}\right) & \ge & \underbrace{\lambda_{15}(G_{1,2})}_{-2} + \lambda_{10}(G_3) \ge -2-1.
\end{eqnarray*}


Therefore, we may conclude that $-3 \le \lambda_9({\cal Q}(4)) \le 4$. It should be observed that the eigenvalues are organized in non-increasing order, that is, $\lambda_1 \ge \lambda_2 \ge \dots \ge \lambda_{4^2}$.

\end{example}

\bigskip

\noindent \textbf{Acknowledgments.}
The authors were partially supported by CIDMA (Center for Research and Development in Mathematics and Applications) under the FCT (Portuguese Foundation for Science and Technology) Multi-Annual Financing Program for R\&D Units.

\end{document}